\newtheorem{thm}{Theorem}[section]
\newtheorem{prop}[thm]{Proposition}
\newtheorem{lem}[thm]{Lemma}
\theoremstyle{definition}
\newtheorem{definition}[thm]{Definition}
\newtheorem{question}[thm]{Question}
\theoremstyle{remark}
\newtheorem{remark}[thm]{Remark}
\numberwithin{equation}{section}
\newcommand{\Perm}{\mathrm{Perm}}
\newcommand{\Hol}{\mathrm{Hol}}
\newcommand{\Aut}{\mathrm{Aut}}
\newcommand{\Stab}{\mathrm{Stab}}
\newcommand{\Hom}{\mathrm{Hom}}
\newcommand{\Map}{\mathrm{Map}}
\newcommand{\Inn}{\mathrm{Inn}}
\newcommand{\Out}{\mathrm{Out}}
\newcommand{\conj}{\mathrm{conj}}
\newcommand{\Reg}{\mathrm{Reg}}
\newcommand{\ff}{\mathfrak{f}}
\newcommand{\fg}{\mathfrak{g}}
\newcommand{\bZ}{\mathbb{Z}}
\newcommand{\bN}{\mathbb{N}}
\newcommand{\GL}{\mathrm{GL}}
\begin{document}

\large 

\title[Hopf-Galois structures and bijective crossed homomorphisms]{Non-existence of Hopf-Galois structures\\and bijective crossed homomorphisms}

\author{Cindy (Sin Yi) Tsang}
\address{Yau Mathematical Sciences Center, Tsinghua University}
\email{sinyitsang@math.tsinghua.edu.cn}\urladdr{http://sites.google.com/site/cindysinyitsang/} 

\date{\today}

\maketitle

\begin{abstract}
By work of C. Greither and B. Pareigis as well as N. P. Byott, the enumeration of Hopf-Galois structures on a Galois extension of fields with Galois group $G$ may be reduced to that of regular subgroups of $\Hol(N)$ isomorphic to $G$ as $N$ ranges over all groups of order $|G|$, where $\Hol(-)$ denotes the holomorph. In this paper, we shall give a description of such subgroups of $\Hol(N)$ in terms of bijective crossed homomorphisms $G\longrightarrow N$, and then use it to study two questions related to non-existence of Hopf-Galois structures. %We shall give alternative proofs of a few known results, and then extend their proof methods to obtain new results.
\end{abstract}

\tableofcontents

\newpage

\section{Introduction}

Let $G$ be a finite group, and write $\Perm(G)$ for the symmetric group of $G$. Recall that a subgroup $N$ of $\Perm(G)$ is said to be \emph{regular} if the map
\[ \xi_N: N\longrightarrow G;\hspace{1em}\xi_N(\eta) =\eta(1_G)\]
is bijective. Notice that $N$ must have the same order as $G$ in this case. There are two obvious examples, namely $\rho(G)$ and $\lambda(G)$, where
\[\begin{cases} \rho: G\longrightarrow\Perm(G);\hspace{1em}\rho(\sigma) = (\tau\mapsto \tau\sigma^{-1})\\\lambda : G\longrightarrow \Perm(G);\hspace{1em}\lambda(\sigma) = (\tau\mapsto\sigma\tau)\end{cases}\]
are the right and left regular representations of $G$, respectively. It is easy to see that  $\rho(G)$ and $\lambda(G)$ are equal precisely when $G$ is abelian. 

\vspace{1.5mm}

Now, consider a finite Galois extension $L/K$ of fields with Galois group $G$. The group ring $K[G]$ is a Hopf algebra over $K$ and its action on $L$ defines a Hopf-Galois structure on $L/K$. By C. Greither and B. Pareigis \cite{GP}, there is a bijection between Hopf-Galois structures on $L/K$ and regular subgroups of $\Perm(G)$ normalized by $\lambda(G)$, with the \emph{classical} structure $K[G]$ corresponding to $\rho(G)$. The consideration of the various Hopf-Galois structures, instead of just $K[G]$, has applications in Galois module theory; see \cite{Childs book} for a survey on this subject up to the year 2000.

\vspace{1.5mm}

Therefore, it is of interest to determine the number
\[ e(G) = \#\{\mbox{regular subgroups of $\Perm(G)$ normalized by $\lambda(G)$}\}.\]
See \cite{By96, Byott pq, Byott simple, Byott almost cyclic, Byott Childs, Byott abelian, Byott soluble, Byott squarefree, Childs simple, Childs EA, FSH, Kohl98, Z thesis} for some known results. In general, it could be difficult to compute $e(G)$ because $\Perm(G)$ might have many regular subgroups, and the papers above all make use of the following simplification due to N. P. Byott in \cite{By96}. Note that it suffices to compute
\[ e(G,N) =\# \left\{ \begin{array}{c}\mbox{regular subgroups of $\Perm(G)$ which are}
\\ \mbox{isomorphic to $N$ and normalized by $\lambda(G)$}\end{array}\right\}\]
for each group $N$ of order $|G|$. Further, define
\[ \Hol(N) = \{\pi \in \Perm(N) : \pi\mbox{ normalizes }\lambda(N)\},\]
called the \emph{holomorph} of $N$. Then, as shown in \cite{By96} or \cite[Section 7]{Childs book}, we have
\begin{equation}\label{Byott formula} e(G,N) = \frac{|\Aut(G)|}{|\Aut(N)|}\cdot \#\left\{\begin{array}{c}\mbox{regular subgroups in $\Hol(N)$}\\\mbox{which are isomorphic to $G$}\end{array}\right\},\end{equation}
which in turn may be rewritten as
\[ e(G,N) = \frac{1}{|\Aut(N)|}\cdot \#\Reg(G,\Hol(N)),\]
where we define
\[ \Reg(G,\Hol(N)) = \{\beta\in \Hom(G,\Hol(N)): \beta(G)\mbox{ is regular}\}.\]
Notice that elements of the above set are automatically injective because $N$ has order $|G|$. The number (\ref{Byott formula}) is much easier to compute because
\begin{equation}\label{Hol(N)} \Hol(N)  = \rho(N)\rtimes \Aut(N),
\end{equation}
by \cite[Proposition 7.2]{Childs book}, for example. In particular, the set $\Reg(G,\Hol(N))$ may be parametrized by certain $G$-actions on $N$ together with the bijective crossed homomorphisms associated to them; see Proposition~\ref{corr thm} below.
 %In \cite{Byott Childs}, N. P. Byott and L. N. Childs defined
%\begin{equation}\label{beta fg} \beta_{(f,g)}\in \Reg(G,\Hol(N)); \hspace{1em}\beta_{(f,g)}(\sigma) = \rho(g(\sigma))\lambda(f(\sigma)) \end{equation}
%using $f,g\in \Hom(G,N)$ such that $(f,g)$ is \emph{fixed point free} (see Definition~\ref{fpf pair def}). We shall discuss, in Section~\ref{liftable sec}, how this relates to our parametrization.

\vspace{1.5mm}

The purpose of this paper is to use the parametrization of $\Reg(G,\Hol(N))$ given in Proposition~\ref{corr thm} to study two questions concerning non-existence of Hopf-Galois structures; see Questions~\ref{Q1} and~\ref{Q2}.

\vspace{1.5mm}

For notation, given a group $\Gamma$, we shall write:
\begin{align*}
%1_\Gamma & =\mbox{the identity element of $\Gamma$}\\
Z(\Gamma) & = \mbox{the center of $\Gamma$},\\
[\Gamma,\Gamma] & = \mbox{the commutator subgroup of $\Gamma$},\\
\Inn(\Gamma) & =\mbox{the inner automorphism group of $\Gamma$},\\
\Out(\Gamma) & = \mbox{the outer automorphism group of $\Gamma$}.
\end{align*}
Also, all groups considered in this paper are finite.

\subsection{Isomorphic type} In the case that $N=G$, notice that $\rho(G)$ and $\lambda(G)$ are regular subgroups of $\Hol(G)$ isomorphic to $G$. We ask: %and they coincide exactly when $G$ is abelian. 

\begin{question}\label{Q1}Is there a regular subgroup of $\Hol(G)$ isomorphic to $G$ other than the obvious ones $\rho(G)$ and $\lambda(G)$?\end{question}

For $G$ abelian, the answer is completely known. %For terminology, we shall say that $n\in\bN$ is \emph{almost squarefree} if 
%\begin{equation}\label{n exp}n = 2^\delta p_1\cdots p_m\mbox{ for odd primes $p_1<\dots<p_m$ and $\delta\in\{0,1,2\}$},\end{equation}
%where we allow the product of odd primes to be empty. 

\begin{thm}\label{abelian thm}Let $A$ be an abelian group. Then, we have $e(A,A)=1$ if and only if $|A| = 2^\delta p_1\cdots p_m$ for distinct odd primes $p_1,\dots,p_m$ and $\delta\in\{0,1,2\}$, where we allow the product of odd primes to be empty.
\end{thm}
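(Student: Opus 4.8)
The plan is to translate the question via Proposition~\ref{corr thm} and then handle it prime by prime.

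\emph{Reformulation.} By Proposition~\ref{corr thm}, an element of $\Reg(A,\Hol(A))$ is a pair $(\fg,\ff)$, where $\fg\colon A\to\Aut(A)$ is an action and $\ff\colon A\to A$ is a bijective crossed homomorphism for $\fg$; the pair with trivial $\fg$ and $\ff=\mathrm{id}$ gives $\rho(A)=\lambda(A)$, and two pairs give the same subgroup exactly when they differ by precomposition with an element of $\Aut(A)$. Since precomposition cannot turn a nontrivial $\fg$ into a trivial one, it follows that $e(A,A)=1$ if and only if the trivial action is the only $\fg\colon A\to\Aut(A)$ admitting a bijective crossed homomorphism. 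I would then prove the multiplicativity $e(A,A)=\prod_{p\mid|A|}e(A_p,A_p)$ over the Sylow subgroups $A_p$ of $A$: using $\Hol(A)=\prod_p\Hol(A_p)$, an orbit--block argument shows that any regular $B\le\Hol(A)$ with $B\cong A$ must split as $B=\prod_p B_p$ with each $B_p\le\Hol(A_p)$ regular and isomorphic to $A_p$ (alternatively one appeals to the known coprime-order multiplicativity of $e$). Since the abelian groups of order $2^\delta p_1\cdots p_m$ with $\delta\le2$ are precisely those whose Sylow $p$-subgroup is $\bZ/p$ for $p$ odd and lies in $\{1,\bZ/2,\bZ/4,(\bZ/2)^2\}$ for $p=2$, the ``if'' direction reduces to showing $e(\bZ/p,\bZ/p)=1$ for all primes $p$ and $e(A_2,A_2)=1$ for $A_2\in\{1,\bZ/2,\bZ/4,(\bZ/2)^2\}$.

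\emph{``Only if''.} Here multiplicativity is not needed. Every abelian group not of the stated form has, among its Sylow subgroups, either an odd-order one of order $\ge p^2$ (hence with $\bZ/p^2$ or $(\bZ/p)^2$ as a direct factor) or a $2$-group of order $\ge8$ (hence with $\bZ/8$, $\bZ/4\times\bZ/2$ or $(\bZ/2)^3$ as a direct factor); so $A$ itself has one of these five groups, call it $C$, as a direct factor. A nontrivial pair $(\fg_C,\ff_C)$ for $C$ extends to one for $A=C\times D$ by $\fg(c,d)=\fg_C(c)\times\mathrm{id}_D$ and $\ff(c,d)=(\ff_C(c),d)$ --- again a bijective crossed homomorphism for a nontrivial action --- so it suffices to exhibit such a pair for each of the five groups. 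For $\bZ/p^2$ with $p$ odd one may take $\fg(a)=\bigl(x\mapsto(1+p)^ax\bigr)$ and $\ff(a)=a+\binom{a}{2}p$. For $(\bZ/p)^2$ ($p$ odd) and for $(\bZ/2)^3$ one may take $\ff=\mathrm{id}$ and $\fg(a)=I+L_a$, where $L_a$ is left multiplication by $a$ in a nonzero commutative associative nilpotent algebra structure on $\bF_p^2$, resp.\ $\bF_2^3$, whose circle group (under $a\circ b=a+b+ab$) is again elementary abelian --- the maximal ideal of $\bF_p[x]/(x^3)$ works for odd $p$ (a square-zero structure does not), and the square-zero algebra on $\bF_2^3$ with $e_1e_2=e_3$ works for $p=2$. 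Finally $\bZ/8$ and $\bZ/4\times\bZ/2$ are handled by analogous explicit formulas: put a group law $*$ of the correct isomorphism type on the underlying set with each map $x\mapsto g*x$ affine, and take $\ff=\mathrm{id}$, $\fg(g)=\bigl(x\mapsto -g+(g*x)\bigr)$. In every case the only real work is to check that $\ff$ is well defined and bijective.

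\emph{``If''.} For the allowed $p$-groups one must show every nontrivial $\fg$ fails to admit a bijective crossed homomorphism. The key observation is that the restriction of a crossed homomorphism $\ff$ for $\fg$ to $\ker\fg$ is an ordinary homomorphism into the fixed subgroup $A^{\fg(A)}$; if $\ff$ is bijective this restriction is injective, so $|\ker\fg|\le|A^{\fg(A)}|$. For $\bZ/p$ this already forces $\fg$ trivial, since $|\Aut(\bZ/p)|$ is prime to $p$ and no nontrivial $\fg$ even exists; $\{1,\bZ/2\}$ are trivial. For $\bZ/4$ and $(\bZ/2)^2$ the inequality holds with equality for the remaining nontrivial $\fg$, so I would supplement it with the relation $\ff(2\sigma)=(1+\fg(\sigma))\ff(\sigma)$ together with the fact that $\ff$ must then carry $\ker\fg$ isomorphically onto $A^{\fg(A)}$; checking the finitely many nontrivial $\fg$ on these two groups (up to conjugacy by $\Aut(A)$, which preserves the property of admitting a bijective crossed homomorphism) one reaches a numerical contradiction in each case --- on $\bZ/4$, say, the first relation forces $\ff(2)=0$ while bijectivity forces $\ff(2)=2$. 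This rules out every nontrivial $\fg$ and, together with the reduction above, completes the proof.

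\emph{Expected obstacle.} I expect the hardest part to be the ``only if'' half for the three abelian $2$-groups of order $8$: one must write down honest bijective crossed homomorphisms, and $\bZ/4\times\bZ/2$ in particular requires a carefully chosen action, since a naive choice violates the constraint $\ff(\sigma)+\fg(\sigma)\ff(\tau)=\ff(\tau)+\fg(\tau)\ff(\sigma)$ imposed by commutativity of $A$. Making the Sylow-wise multiplicativity precise, and handling the non-strict fixed-subgroup inequality for $\bZ/4$ and $(\bZ/2)^2$, are the other places calling for care.
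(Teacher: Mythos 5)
Your ``if'' half is workable and genuinely different from the paper's route: instead of writing $A=A_0\times A_\sigma$ and running the multiplicative-order argument of Lemma~\ref{squarefree lem2}, you reduce to Sylow subgroups. That reduction can indeed be justified: each projection of a regular $B\simeq A$ in $\Hol(A)=\prod_p\Hol(A_p)$ is a transitive abelian, hence regular, subgroup of $\Hol(A_p)$, so it has $p$-power order and every other Sylow factor of $B$ projects trivially; this gives $B=\prod_p B_p$ with $B_p\leq\Hol(A_p)$ regular, and the problem collapses to $\bZ/p$, $\bZ/4$, $(\bZ/2)^2$, which you then treat essentially as in Lemma~\ref{squarefree lem1}. (Note your $\ff$ and $\fg$ are swapped relative to the paper's convention.)

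The ``only if'' half, however, has a genuine gap: the reduction to the five groups $\bZ/p^2$, $(\bZ/p)^2$, $\bZ/8$, $\bZ/4\times\bZ/2$, $(\bZ/2)^3$ is false. An abelian $p$-group of order at least $p^2$ need not have $\bZ/p^2$ or $(\bZ/p)^2$ as a direct factor ($\bZ/p^3$ has neither), and an abelian $2$-group of order at least $8$ need not contain $\bZ/8$, $\bZ/4\times\bZ/2$ or $(\bZ/2)^3$ as a direct factor: $\bZ/16$, $\bZ/2\times\bZ/16$ and, most importantly, $(\bZ/4)^k$ with $k\geq2$ are counterexamples. Since your extension trick requires an honest direct factor, a group such as $A=\bZ/4\times\bZ/4$ (or any $A$ whose $2$-Sylow subgroup is $(\bZ/4)^k$, $k\geq2$, with squarefree odd part) is never reached by your argument. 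The cyclic omissions can be repaired by generalizing your $\bZ/p^2$ formula to $\bZ/p^n$ and $\bZ/2^n$, or by citing \cite{By96}, but $(\bZ/4)^k$ cannot: one needs a separate construction for $\bZ/4\times\bZ/4$, which is precisely case (4) of the paper's Lemma~\ref{H lem}, handled there by explicit affine generators. A secondary slip: taking the crossed homomorphism to be the identity forces the action to be trivial (the cocycle identity gives $\fg(\sigma_1)(\sigma_2)=\sigma_2$), and $a\mapsto I+L_a$ is a homomorphism from the circle group of the nilpotent algebra, not from $A$; what your algebra construction really produces is the regular subgroup $\{\rho(a)(I+L_a):a\in A\}$, whose associated pair (action, crossed homomorphism) must then be computed correctly rather than declared to be $(I+L_a,\mathrm{id})$. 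This is fixable, but the missing $(\bZ/4)^k$ family is a real hole in the forward implication.
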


Most of Theorem~\ref{abelian thm} may be deduced from \cite[Theorem 5]{Byott abelian} and results in \cite{By96, Z thesis}. In Section~\ref{abelian sec}, we shall give an alternative and independent proof of the backward implication, as well as a proof of the forward implication using only a couple results from \cite{By96, Z thesis}.

\vspace{1.5mm}

For $G$ non-abelian, the answer is not quite understood. In \cite{Childs simple}, S. Carnahan and L. N. Childs answered Question~\ref{Q1} in the negative when $G$ is non-abelian simple.  In Section~\ref{QS sec}, we shall extend their result to all quasisimple groups. Recall that $G$ is said to be \emph{quasisimple} if $G = [G,G]$ and $G/Z(G)$ is simple. %In particular, a non-abelian simple group is quasisimple.
 
\begin{thm}\label{QS thm}Let $Q$ be a quasisimple group. Then, we have $e(Q,Q)=2$.
\end{thm}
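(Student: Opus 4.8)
The plan is to combine the parametrization of $\Reg(Q,\Hol(Q))$ from Proposition~\ref{corr thm} with the structure theory of quasisimple groups, reducing everything to the already-known case of simple groups. By~(\ref{Byott formula}) with $G=N=Q$ we have $e(Q,Q)=\#\{\text{regular subgroups of }\Hol(Q)\text{ isomorphic to }Q\}$; since $Q=[Q,Q]$ is non-abelian, $\rho(Q)$ and $\lambda(Q)$ are distinct, so $e(Q,Q)\ge 2$ and it suffices to show there are no others. By Proposition~\ref{corr thm}, such a subgroup is $\beta(Q)$ with $\beta(\sigma)=\rho(f(\sigma))\,\phi(\sigma)$ for a homomorphism $\phi\colon Q\to\Aut(Q)$ and a bijective crossed homomorphism $f\colon Q\to Q$ relative to $\phi$, with $\beta(Q)=\rho(Q)$ precisely when $\phi$ is trivial. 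Write $\bar Q=Q/Z(Q)$. I will use that $\bar Q$ is non-abelian simple, that $Q$ being perfect forces $\Aut(Q)\hookrightarrow\Aut(\bar Q)$ carrying $\Inn(Q)$ isomorphically onto $\Inn(\bar Q)$ (so that $\Out(Q)\hookrightarrow\Out(\bar Q)$ is solvable), and that the only normal subgroups of $Q$ are $Q$ itself and the subgroups of $Z(Q)$.

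First I would set up a dichotomy for $\phi$. As $Q$ is perfect and $\Out(\bar Q)$ is solvable, the composite $Q\xrightarrow{\phi}\Aut(Q)\hookrightarrow\Aut(\bar Q)\to\Out(\bar Q)$ is trivial, hence $\phi(Q)\subseteq\Inn(Q)$. Since $\ker\phi\trianglelefteq Q$, either $\ker\phi=Q$ — then $\phi$ is trivial, $f$ is a bijective endomorphism and hence an automorphism, and $\beta(Q)=\rho(Q)$ — or $\ker\phi\le Z(Q)$, in which case $\phi(Q)\cong Q/\ker\phi$ is a perfect subgroup of $\Inn(Q)\cong\bar Q$ surjecting onto $\bar Q$, so a comparison of orders forces $\ker\phi=Z(Q)$ and $\phi(Q)=\Inn(Q)$. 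It remains to show that in this second case $\beta(Q)=\lambda(Q)$.

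Assume $\phi(Q)=\Inn(Q)$. Write $\phi(\sigma)=\conj(x_\sigma)$ and let $g(\sigma)\in\bar Q$ be the image of $x_\sigma$; then $g\colon Q\twoheadrightarrow\bar Q$ has kernel $Z(Q)$, and a direct computation in $\Perm(Q)$ gives $\beta(\sigma)=\lambda(x_\sigma)\,\rho(f(\sigma)x_\sigma)$. I would then study the map $\bar w\colon Q\to\bar Q$, $\bar w(\sigma)=\overline{f(\sigma)}\,g(\sigma)$: the crossed-homomorphism identity for $f$ shows at once that $\bar w$ is a homomorphism. If $\bar w$ is trivial then $f(\sigma)x_\sigma\in Z(Q)$ for all $\sigma$, whence $\beta(\sigma)=\lambda(f(\sigma)^{-1})$, and since $f$ is bijective $\beta(Q)=\lambda(Q)$, as wanted. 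The crux is to rule out $\bar w$ nontrivial: in that case $\ker\bar w$ is a proper normal subgroup of $Q$, hence contained in $Z(Q)$, and a count of orders forces $\ker\bar w=Z(Q)$; thus $\bar w$ — and therefore $\overline f$, as $g$ kills $Z(Q)$ — vanishes on $Z(Q)$, giving $f(Z(Q))\subseteq Z(Q)$ and hence $f(Z(Q))=Z(Q)$ by bijectivity. Since $\phi$ is trivial exactly on $Z(Q)$, this yields $\beta(Q)\cap\rho(Q)=\{\beta(\sigma):\sigma\in Z(Q)\}=\rho(f(Z(Q)))=\rho(Z(Q))$.

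Finally I would pass to the simple quotient. The cosets of $Z(Q)$ form a block system for $\Hol(Q)$ acting on $Q$, producing a homomorphism $q_*\colon\Hol(Q)\to\Hol(\bar Q)$ with kernel $\rho(Z(Q))$ under which $\rho(Q)$ and $\lambda(Q)$ map onto the right and left regular representations $\rho(\bar Q)$ and $\lambda(\bar Q)$. As $\rho(Z(Q))\le\beta(Q)$, the image $q_*(\beta(Q))$ is a regular subgroup of $\Hol(\bar Q)$ isomorphic to $\bar Q$ whose projection to $\Aut(\bar Q)$ is $\Inn(\bar Q)\ne 1$; by the known simple case~\cite{Childs simple} it must equal $\lambda(\bar Q)$. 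Since $q_*^{-1}(\lambda(\bar Q))$ and $\lambda(Q)$ are both subgroups of $\Hol(Q)$ of order $|Q|$ containing $\ker q_*=\rho(Z(Q))$, they coincide; and $q_*(\beta(Q))=\lambda(\bar Q)$ with $|\beta(Q)|=|Q|$ then forces $\beta(Q)=q_*^{-1}(\lambda(\bar Q))=\lambda(Q)$. Hence $\rho(Q)$ and $\lambda(Q)$ are the only regular subgroups of $\Hol(Q)$ isomorphic to $Q$, so $e(Q,Q)=2$. I expect the step of excluding $\bar w$ nontrivial — equivalently, showing $\beta(Q)\cap\rho(Q)=\rho(Z(Q))$ — to be the main obstacle, since that is precisely where the bijectivity of the crossed homomorphism is indispensable and what makes the descent to the simple case possible.
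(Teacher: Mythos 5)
Your proposal is correct, and its second half takes a genuinely different route from the paper. The first half coincides with the paper's: perfectness of $Q$ plus Schreier's conjecture forces the action into $\Inn(Q)$ (the paper's Lemma~\ref{QS inn}), and the dichotomy $\ker\phi=Q$ versus $\ker\phi=Z(Q)$, with the trivial case giving $\rho(Q)$, is exactly the paper's use of (\ref{must rho}). The divergence is in the inner case: the paper shows that if both induced homomorphisms $Q\to Q/Z(Q)$ (your $g$ and $\bar w$) are non-trivial, then they descend to a fixed point free pair of automorphisms of the simple group $Q/Z(Q)$, contradicting Proposition~\ref{simple prop}(b); the key input there is Lemma~\ref{QS gZ}, proved by a commutation computation. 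You instead obtain the analogue of Lemma~\ref{QS gZ} more cheaply — from the fact that $\bar w$ is a homomorphism (Proposition~\ref{wfpf corr}) whose kernel must be $Z(Q)$ (Proposition~\ref{QS prop}(b)) — deduce $\rho(Z(Q))\le\beta(Q)$, and then descend along the block-system homomorphism $\Hol(Q)\to\Hol(Q/Z(Q))$ (whose kernel is $\rho(Z(Q))$ by the injectivity of $\Aut(Q)\to\Aut(Q/Z(Q))$, i.e.\ Proposition~\ref{QS prop}(c)) to produce a regular subgroup of $\Hol(Q/Z(Q))$ isomorphic to the simple quotient, which by the Carnahan--Childs theorem \cite{Childs simple} must be $\lambda(Q/Z(Q))$, forcing $\beta(Q)=\lambda(Q)$. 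Both arguments rest on the same CFSG facts (since \cite{Childs simple} uses Proposition~\ref{simple prop}), but yours is a genuine reduction to the known simple case via a descent of regular subgroups containing $\rho(Z(Q))$ — a technique of independent interest — whereas the paper's proof is self-contained modulo Proposition~\ref{simple prop} and in effect reproves the simple case; your case analysis also shows a posteriori that the ``$\bar w$ non-trivial'' branch is vacuous, which matches the paper's contradiction. One small expository point: to conclude $q_*^{-1}(\lambda(Q/Z(Q)))=\lambda(Q)$ you should say that $\lambda(Q)$ is contained in this preimage because $q_*(\lambda(Q))=\lambda(Q/Z(Q))$ and then compare orders; merely noting that both contain $\ker q_*$ and have order $|Q|$ is not by itself a proof, though the needed containment is already among your stated facts.
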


\subsection{Non-isomorphic type} In the case that $N$ has order $|G|$ but $N\not\simeq G$, there is no obvious regular subgroup of $\Hol(N)$ isomorphic to $G$. We ask:

\begin{question}\label{Q2} Is there a regular subgroup of $\Hol(N)$ isomorphic to $G$?
\end{question}

In \cite{Byott simple}, N. P. Byott answered Question~\ref{Q2} in the negative for every $N\not\simeq G$ when $G$ is non-abelian simple. One key idea in \cite{Byott simple} is the use of \emph{characteristic subgroups} of $N$, that is, the subgroups $M$ of $N$ for which $\varphi(M)=M$ for all $\varphi\in\Aut(N)$. In Section~\ref{idea sec}, we shall reformulate as well as extend this idea in terms of our Proposition~\ref{corr thm}; see Lemma~\ref{mod char} below. Then, in Section~\ref{pn sec}, we shall apply Lemma~\ref{mod char} to give an alternative proof of the following result due to T. Kohl \cite{Kohl98}.

\begin{thm}\label{cyclic thm} Let $C_{p^n}$ be a cyclic group of odd prime power order $p^n$. Then, we have $e(C_{p^n},N) = 0$ for all groups $N$ of order $p^n$ with $N\not\simeq C_{p^n}$.
\end{thm}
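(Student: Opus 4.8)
The plan is to use the parametrization from Proposition~\ref{corr thm}: a regular subgroup of $\Hol(N)$ isomorphic to $G = C_{p^n}$ corresponds to a $G$-action on $N$ together with a bijective crossed homomorphism $\mathfrak{f}: G \longrightarrow N$. Suppose for contradiction such data exist for some $N$ of order $p^n$ with $N \not\simeq C_{p^n}$. Let $\sigma$ be a generator of $G = C_{p^n}$. I would first examine what the $G$-action on $N$ can be: since $G$ acts on $N$ via automorphisms, the action factors through a cyclic subgroup of $\Aut(N)$, and the crossed homomorphism condition $\mathfrak{f}(\sigma^{i+j}) = \mathfrak{f}(\sigma^i) \cdot {}^{\sigma^i}\mathfrak{f}(\sigma^j)$ forces $\mathfrak{f}$ to be determined by $x := \mathfrak{f}(\sigma)$, with the closure relation $\mathfrak{f}(\sigma^{p^n}) = 1_N$ expanding to a ``twisted norm'' condition on $x$. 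The bijectivity of $\mathfrak{f}$ is the nontrivial input to exploit.

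The key tool is Lemma~\ref{mod char}: one should pass to a proper nontrivial characteristic subgroup $M$ of $N$. Since $N$ is a $p$-group that is not cyclic, it has such a characteristic subgroup --- the most natural choice is the Frattini subgroup $M = \Phi(N) = N^p[N,N]$, so that $N/M$ is elementary abelian of rank $r \geq 2$. Because $M$ is characteristic, it is $G$-stable under the action, and Lemma~\ref{mod char} should yield an induced bijective crossed homomorphism (or at least a regular subgroup) for the $G$-action on the quotient $N/M$, i.e. a regular subgroup of $\Hol(N/M)$ that is a quotient of (an image of) $C_{p^n}$. But $C_{p^n}$ is cyclic, hence so is every quotient; and a cyclic group acting on an elementary abelian group $N/M$ of rank $\geq 2$ cannot give rise to a regular embedding, because the image in $\Hol(N/M) = (N/M) \rtimes \GL_r(\bF_p)$ of a cyclic group would have to surject onto $N/M$, which is impossible for a cyclic group of order dividing $p^n$ mapping onto a non-cyclic group. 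This contradiction is the heart of the argument. I would need to be careful about exactly what Lemma~\ref{mod char} delivers on the quotient --- whether it is literally a regular subgroup isomorphic to a quotient of $G$, or a bijective crossed homomorphism for the induced action --- and phrase the descent accordingly.

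The main obstacle I anticipate is twofold. First, Lemma~\ref{mod char} as invoked gives information about the behavior modulo $M$, but to reach a genuine contradiction I must ensure the induced object on $N/M$ is honestly ``regular of the right isomorphism type,'' which may require tracking orders: $|G| = |N| = p^n$ while $|N/M| = p^{n-r}$, so the induced crossed homomorphism $G \longrightarrow N/M$ is no longer bijective, only surjective. The correct statement is presumably that its image, or the induced map from $G/(\text{something})$, is bijective onto $N/M$; pinning this down is where the odd prime $p$ should enter (for $p = 2$ and $|A| = 4$, $C_4$ does embed as a regular subgroup of $\Hol(C_2 \times C_2)$, consistent with Theorem~\ref{abelian thm}, so oddness is genuinely used somewhere --- likely to rule out the rank-$2$ elementary abelian quotient receiving a cyclic regular subgroup when $p=2$ is replaced by odd $p$, since $C_{p^2} \not\hookrightarrow \Hol(C_p \times C_p)$ regularly for odd $p$). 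Second, I should double-check that the Frattini quotient is the right characteristic subgroup to use, or whether iterating down a characteristic series (e.g. the lower $p$-central series) is cleaner; the inductive structure on $n$ may make a minimal-counterexample argument on $|N|$ the smoothest packaging.
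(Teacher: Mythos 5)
Your skeleton coincides with the paper's: parametrize via Proposition~\ref{corr thm}, pass to the Frattini quotient $N/\Phi(N)\simeq(\bZ/p\bZ)^m$ with $m\geq 2$, and reduce to showing the induced map $\overline{\fg}$ cannot be surjective. But the step you call the heart of the argument is not valid. You claim the image of the cyclic group in $\Hol(N/M)$ ``would have to surject onto $N/M$, which is impossible for a cyclic group mapping onto a non-cyclic group.'' The map $C_{p^n}\longrightarrow N/M$ in question is $\overline{\fg}$ (equivalently the orbit map of $1_{N/M}$ under the image subgroup), which is a crossed homomorphism, not a group homomorphism, so there is no contradiction in a cyclic group surjecting onto a non-cyclic elementary abelian group in this way --- for $p=2$ exactly this happens, since $C_4$ embeds regularly in $\Hol(C_2\times C_2)$, as you note yourself. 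What the descent honestly yields (using that a transitive abelian permutation group is regular) is that a surjective $\overline{\fg}$ would produce a regular \emph{cyclic} subgroup of order $p^m$ in $\Hol((\bZ/p\bZ)^m)$ with $m\geq2$; ruling this out for odd $p$ is precisely the remaining mathematical content, and your proposal only asserts it (``presumably'', ``likely'', and only in the rank-two case) without an argument, nor does it address rank $m\geq 3$.

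The paper closes exactly this gap by bounding $[C_{p^n}:\ker(\overline{\fg})]$. Writing $\sigma$ for a generator and $p^r$ for the order of $\overline{\ff}(\sigma)$, viewed as a unipotent matrix in $\GL_m(\bZ/p\bZ)$: for $m\geq3$ one has $r\leq m-2$ (Lemma~\ref{cyclic matrix lem}), and since $N/\Phi(N)$ has exponent $p$, the identity $\overline{\fg}(\sigma^{p^{r+1}})=\overline{\fg}(\sigma^{p^r})^p=1$ from (\ref{cyclic fg}) gives $[C_{p^n}:\ker(\overline{\fg})]\leq p^{r+1}\leq p^{m-1}$; for $m=2$, $\overline{\ff}(\sigma)$ is conjugate to $\left(\begin{smallmatrix}1&b\\0&1\end{smallmatrix}\right)$ and $\overline{\fg}(\sigma^p)=\prod_{i=0}^{p-1}\overline{\ff}(\sigma)^i(\overline{\fg}(\sigma))=1$ because $\sum_{i=0}^{p-1}\left(\begin{smallmatrix}1&b\\0&1\end{smallmatrix}\right)^i=\left(\begin{smallmatrix}p&p(p-1)b/2\\0&p\end{smallmatrix}\right)$ vanishes mod $p$ --- this is the one place where $p$ odd is used (Lemma~\ref{cyclic lem} and (\ref{p odd})), and it is exactly what fails for $p=2$. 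In all cases $[C_{p^n}:\ker(\overline{\fg})]\leq p^{m-1}<p^m=|N/\Phi(N)|$, so Lemma~\ref{mod char}(b) shows $\overline{\fg}$ is not surjective. None of this computation, and no substitute for it, appears in your proposal, so as written the proof is incomplete at its decisive step.
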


In view of \cite{Byott simple}, it is natural to ask whether Question~\ref{Q2} also has a negative answer for every $N\not\simeq G$ when $G$ is quasisimple. In Section~\ref{2An sec}, by applying Lemma~\ref{mod char} together with some other techniques from \cite{Byott simple}, we shall show that this is indeed the case when $G$ is in the following family.

\begin{thm}\label{2An thm}Let $2A_n$ be the double cover of the alternating group $A_n$ on $n$ letters, where $n\geq 5$. Then, we have $e(2A_n,N)=0$ for all groups $N$ of order $n!$ with $N\not\simeq 2A_n$.
\end{thm}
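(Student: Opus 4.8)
The plan is to argue by contradiction. Suppose $\Reg(2A_n,\Hol(N))\neq\emptyset$ for some group $N$ with $|N|=n!$ and $N\not\simeq 2A_n$; I will show that $N$ is then forced to be isomorphic to $2A_n$, which is absurd. Write $G=2A_n$ and $Z=Z(G)$. Since $n\geq 5$ the group $G$ is quasisimple, so $Z\simeq C_2$ and the only normal subgroups of $G$ are $1$, $Z$, and $G$; throughout, this trichotomy takes over the role that the simplicity of $G$ plays in \cite{Byott simple}. By Proposition~\ref{corr thm} I may fix a $G$-action on $N$ together with a bijective crossed homomorphism $\mathfrak{g}\colon G\to N$ for it, equivalently a regular subgroup $G'\leq\Hol(N)=\rho(N)\rtimes\Aut(N)$ with $G'\simeq G$.

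The first observation is that $\Aut(N)$ is forced to be very large. Put $P=G'\cap\rho(N)$; this is a normal subgroup of $G'$, hence isomorphic to $1$, $Z$, or $G$. If $P\simeq G$ then $G'=\rho(N)$, so $N\simeq G$, contrary to assumption; thus $|P|\leq 2$. Projecting along $\Hol(N)\to\Hol(N)/\rho(N)\simeq\Aut(N)$ then embeds $G/P$ into $\Aut(N)$, and counting inside the subgroup $\rho(N)G'$ of $\Hol(N)$, whose order is $|N|^2/|P|$, gives $n!/|P|\leq|\Aut(N)|$, so $|\Aut(N)|\geq n!/2$. Hence $N$ is a group of order $n!$ whose automorphism group has order at least $|N|/2$ and contains a subgroup isomorphic to $A_n$ (indeed to $2A_n$, when $|P|=1$).

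Next I would carry out the structure analysis of \cite{Byott simple}, using Lemma~\ref{mod char} to descend to characteristic subgroups of $N$ and their quotients, with the aim of showing that $N$ is quasisimple. The intended sub-steps are: (i) $N$ is not solvable — a solvable $N$ has a characteristic elementary abelian section, to which Lemma~\ref{mod char} applies, and iterating up a characteristic series of $N$ is incompatible with $G$ being non-solvable, exactly as in the disposal of solvable $N$ in \cite{Byott simple}; (ii) because $G$ is perfect there is no surjective crossed homomorphism from $G$ onto a nontrivial abelian group on which $G$ acts trivially, and combining this with Lemma~\ref{mod char} applied to $[N,N]$ and to the terms of the upper central series pushes the solvable radical of $N$ into $Z(N)$ and makes $N/Z(N)$ have trivial solvable radical; (iii) then $\mathrm{soc}(N/Z(N))$ is a direct product of nonabelian simple groups with $N/Z(N)\hookrightarrow\Aut(\mathrm{soc}(N/Z(N)))$, and the constraints $|N|=n!$ and $|\Aut(N)|\geq n!/2$ permit only one simple direct factor, so $N$ is quasisimple. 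Once this is established the conclusion is quick: $\Inn(N)\cong N/Z(N)$ is a nonabelian simple normal subgroup of $\Aut(N)$, so the intersection $I$ of the image of $G/P$ in $\Aut(N)$ with $\Inn(N)$ is a normal subgroup of a group isomorphic to $G/P\in\{G,A_n\}$; since $\Out(N)$ is solvable (Schreier's conjecture, $N$ being quasisimple), $I$ cannot be trivial, nor — when $G/P\simeq G$ — the center $Z$, nor, by comparing orders and using $N\not\simeq 2A_n$, all of $G$. Hence $I\simeq A_n$ and $|P|=2$; then $A_n\cong\mathrm{im}(G/P)\leq\Inn(N)\cong N/Z(N)$, and comparing orders, with the simplicity of $N/Z(N)$ ruling out an index-$2$ subgroup, forces $N/Z(N)\simeq A_n$ and $|Z(N)|=2$. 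But a quasisimple group with center of order $2$ and central quotient $A_n$ is unique — it is the double cover $2A_n$, unambiguously so even for $n\in\{6,7\}$, where the Schur multiplier of $A_n$ exceeds $C_2$ — so $N\simeq 2A_n$, a contradiction. Therefore no such $N$ exists and $e(2A_n,N)=0$ for every $N$ of order $n!$ with $N\not\simeq 2A_n$.

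I expect the structure analysis, steps (ii) and (iii), to be the main obstacle. The awkwardness is that for a characteristic subgroup $M\leq N$ the preimage $\mathfrak{g}^{-1}(M)$ need not be normal in $G$, so the convenient trichotomy $1$, $Z$, $G$ does not apply to it directly; one must instead play off the induced regular structures on $M$ and on $N/M$ provided by Lemma~\ref{mod char}, the perfectness of $G$, the bound $|\Aut(N)|\geq n!/2$, and — in the hardest configurations, where $N$ is close to being centerless — the socle analysis and the embedding $N\hookrightarrow\Aut(\mathrm{soc}(N))$ from \cite{Byott simple}, to eliminate every possibility except $N\simeq 2A_n$. This is precisely the point at which the further techniques of \cite{Byott simple} have to be imported and adapted to the quasisimple setting.
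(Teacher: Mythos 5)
Your opening reduction is fine (the bound $|\Aut(N)|\geq n!/2$, the trichotomy for $P=G'\cap\rho(N)$, and the endgame once $N$ is known to be quasisimple are all correct), but the heart of the theorem is your steps (i)--(iii), and these are not proved: they are a programme, with the decisive cases deferred to ``importing and adapting'' \cite{Byott simple}, as you yourself concede in the final paragraph. The concrete gap is your step (ii): perfectness of $2A_n$ only kills a surjective crossed homomorphism onto a nontrivial abelian quotient $N/M$ when the induced action of $2A_n$ on $N/M$ is \emph{trivial}; when the action is nontrivial, a crossed homomorphism from a perfect group can certainly be surjective onto an abelian group, and ruling this out is exactly where the real work lies. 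In the paper this is Lemma~\ref{2An lem2}: surjectivity of $\overline{\fg}$ forces $[2A_n:\ker(\overline{\fg})]=p^m$ by Lemma~\ref{mod char}~(b), Guralnick's theorem (Lemma~\ref{An lem}) then pins $n$ to $p^m$ or $2^{m-1}$ depending on whether $Z(2A_n)\subset\ker(\overline{\fg})$ --- a case split that has no analogue in the simple-group setting and that your sketch never confronts --- and valuation comparisons (Lemma~\ref{val lem}) between $2A_n$ and $\GL_m(\bZ/p\bZ)$ kill the nontrivial action except in the boundary case $n=8$, $m=4$, where $|2A_8|=2\,|\GL_4(\bZ/2\bZ)|$ and a separate argument via Lemma~\ref{mod char}~(d) is needed. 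None of this is visible in your outline, and the phrase ``exactly as in the disposal of solvable $N$'' does not supply it.

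Your step (iii) has the same character: the assertion that $|N|=n!$ together with $|\Aut(N)|\geq n!/2$ ``permits only one simple direct factor, so $N$ is quasisimple'' is an unsubstantiated claim, and it is in fact the analogue of the paper's Lemma~\ref{2An lem1} (trivializing the action when $N=[N,N]$, via the projection to $S_m$, Schreier's conjecture, and the comparison $|M|\leq|\ker(\overline{\ff})|\leq 2$ followed by Lemma~\ref{2An unique}), which again requires a genuine argument rather than a reference. Note also that the paper's route never needs to prove $N$ quasisimple at all: it fixes one maximal proper characteristic subgroup $M$ (chosen to contain $[N,N]$ when $N$ is not perfect), shows $\overline{\ff}$ is trivial in either case, concludes $N/M\simeq A_n$ from $\ker(\overline{\fg})=Z(2A_n)$, and gets an immediate contradiction --- a considerably shorter path than the full socle analysis you propose. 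As it stands, your submission is a plausible strategy with the two central lemmas missing, not a proof.
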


In order to determine $e(G)$, one has to compute $e(G,N)$ for all groups $N$ of order $|G|$. This could be difficult because there are lots of such $N$ in general. In the case that $e(G,N)=0$ for every $N\not\simeq G$, it suffices to compute $e(G,G)$ and the problem is significantly simplified. However, in most cases, we have $e(G,N)\geq 1$ for at least one $N\not\simeq G$. Nonetheless, it seems very likely that the techniques we develop in Section~\ref{app sec2} may be applied to show that $e(G,N)=0$ for a large family of $N$, whence reducing the number of $N$ that one needs to consider. As an illustration, in Section~\ref{sym sec}, we shall prove:

\begin{thm}\label{sym thm}Let $S_n$ be the symmetric group on $n$ letters, where $n\geq5$, and let $N$ be a group of order $n!$ with $e(S_n,N)\geq1$. Then, we have:
\begin{enumerate}[(1)]
\item $N$ fits into a short exact sequence $1\longrightarrow A_n\longrightarrow N\longrightarrow \{\pm1\}\longrightarrow 1$, or
\item $N$ fits into a short exact sequence $1\longrightarrow \{\pm1\}\longrightarrow N\longrightarrow A_n\longrightarrow 1$, or
\item $S_n$ embeds into $\mbox{Out}(N)$.
\end{enumerate}
Moreover, for any proper maximal characteristic subgroup $M$ of $N$, the quotient $N/M$ is either non-abelian or isomorphic to $\{\pm1\}$.
\end{thm}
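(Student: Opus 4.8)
The plan is to start from Proposition~\ref{corr thm}: the assumption $e(S_n,N)\geq1$ supplies a homomorphism $\alpha\colon S_n\to\Aut(N)$ together with a bijective crossed homomorphism $\ff\colon S_n\to N$ relative to $\alpha$. For the trichotomy (1)--(3) the only facts about $S_n$ I need are that, since $n\geq5$, its normal subgroups are precisely $1$, $A_n$, $S_n$, and that $A_n$ is perfect. Hence $\ker\alpha\in\{1,A_n,S_n\}$ and likewise $\ker\bar\alpha\in\{1,A_n,S_n\}$, where $\bar\alpha\colon S_n\to\Out(N)$ is $\alpha$ composed with $\Aut(N)\to\Out(N)$. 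I would examine these cases in turn. If $\ker\alpha=S_n$ then $\ff$ is a group isomorphism, so $N\cong S_n$ and (1) holds. If $\ker\alpha=A_n$ then $\ff|_{A_n}$ is an injective homomorphism, so $\ff(A_n)\cong A_n$ has index $2$ in $N$; being perfect it equals $[N,N]$, hence is characteristic, and (1) holds. Otherwise $\alpha$ is injective, and I turn to $\bar\alpha$: the case $\ker\bar\alpha=1$ is exactly (3); if $\ker\bar\alpha=S_n$ then $S_n\hookrightarrow\Inn(N)\cong N/Z(N)$, and comparing orders forces $Z(N)=1$ and $N\cong S_n$, i.e.\ (1); if $\ker\bar\alpha=A_n$ then $A_n\hookrightarrow N/Z(N)$, whence $|Z(N)|\leq2$, and $|Z(N)|=1$ gives (1) as above while $|Z(N)|=2$ gives $N/Z(N)\cong A_n$ and $Z(N)\cong\{\pm1\}$, i.e.\ (2). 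Nothing here is affected by the exceptional behaviour of $S_6$, as the normal subgroups of $S_6$ are still $1$, $A_6$, $S_6$.

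For the final assertion, let $M$ be a proper maximal characteristic subgroup of $N$ with $N/M$ abelian. Then $N/M$ is characteristically simple and abelian, hence elementary abelian, say $N/M\cong\bF_p^{\,k}$, and $[N,N]\subseteq M$. If $N$ satisfies (1), then $[N,N]\cong A_n$ has index $2$, so $M\supseteq[N,N]$ forces $M=[N,N]$ and $N/M\cong\{\pm1\}$. If $N$ satisfies (2), then $N/Z(N)\cong A_n$ is perfect, so either $[N,N]=N$, in which case there is no proper $M$ with abelian quotient, or $N\cong A_n\times\{\pm1\}$, in which case $M\supseteq[N,N]$ of index $2$ again gives $N/M\cong\{\pm1\}$. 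So I may assume $N$ satisfies (3) but neither (1) nor (2); then, since every other branch of the trichotomy above forces (1) or (2), the chosen data already has $\alpha$ injective and $\alpha(S_n)\cap\Inn(N)=1$.

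In this situation $H:=\ff^{-1}(M)$ is a subgroup of $S_n$ of index $[N:M]=p^k$ (this is the content of Lemma~\ref{mod char}), and the composite $\bar\ff\colon S_n\to N/M$ of $\ff$ with the projection is a crossed homomorphism whose fibres are the cosets of $H$; in particular $\bar\ff$ is surjective and $\bar\ff^{-1}(1)=H$. Since inner automorphisms act trivially on the abelian quotient $N/M$, the $S_n$-action on $N/M$ factors through $\Out(N)$; let $K\trianglelefteq S_n$ be its kernel. If $K=S_n$, then $\bar\ff$ is a surjective homomorphism onto an abelian group, so $N/M$ is a quotient of $S_n^{\,\mathrm{ab}}\cong\{\pm1\}$, hence $\cong\{\pm1\}$. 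If $K=A_n$, then $\bar\ff|_{A_n}$ is a homomorphism from a perfect group to an abelian group, hence trivial, so $A_n\subseteq H$, $[S_n:H]\leq2$, and again $N/M\cong\{\pm1\}$. The remaining case $K=1$ is where I expect the main difficulty. There $N/M\cong\bF_p^{\,k}$ is a faithful $\bF_p[S_n]$-module, so $k\geq n-2$ by the classical lower bound on the dimension of a faithful $\bF_p$-representation of $S_n$; together with $p^k\mid n!$ and $n\geq5$, an easy estimate of $v_p(n!)$ forces $p=2$. Since then $[S_n:H]=2^k\geq2^{n-2}>2$, I would invoke the classification of subgroups of $S_n$ of prime-power index --- for $n\geq5$, such a subgroup has index $\leq2$ or else fixes a point --- to conclude that $H$ fixes a point, so $n\mid 2^k$ and $n=2^a$; applying the same classification to $H\leq S_{n-1}$ (whose index $2^{k-a}$ exceeds $2$, as $a\geq3$ and $k\geq2^a-2$) shows $H$ fixes a further point, so $(n-1)\mid 2^{k-a}$, which is impossible since $n-1=2^a-1$ is odd and greater than $1$. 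Hence $K=1$ does not occur, completing the argument.

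The crux is therefore the classification of prime-power-index subgroups of $S_n$. Its proof rests on the O'Nan--Scott description of maximal subgroups of $S_n$, together with the facts that for $n\geq5$ the binomial coefficients $\binom{n}{a}$ with $2\leq a\leq n-2$ and the indices of imprimitive maximal subgroups are never prime powers, and that a primitive maximal subgroup $\neq A_n$ cannot have prime-power index --- this last point via Jordan's theorem on primitive groups containing a cycle of prime length, applied to a large prime $r\leq n$ dividing the index (which then forces an $r$-cycle in the subgroup), with the finitely many $n$ having large prime gaps, and the small cases such as $n=6$, treated separately; in particular for $n=6$ the extra conjugacy class of index-$6$ subgroups is harmless because $6$ is not a prime power.
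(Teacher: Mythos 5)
Your proof of the trichotomy and your treatment of the cases where the $S_n$-action on $N/M$ has kernel $A_n$ or $S_n$ follow the same route as the paper: the paper likewise splits on $\ker\ff$ (using (\ref{must rho}) and Lemma~\ref{mod char}(c) when the kernel is non-trivial) and on $\ff(S_n)\cap\Inn(N)$ when $\ff$ is injective, and its endgame for the second statement is exactly your observation that once $A_n$ acts trivially on the abelian quotient, the reduced crossed homomorphism restricts to a homomorphism on the perfect group $A_n$, forcing $[N:M]\le 2$. (Your separate discussion of the cases where $N$ satisfies (1) or (2) is correct but not needed, since the kernel analysis covers all $N$.)

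Where you genuinely diverge is in excluding a faithful action of $S_n$ on $N/M\simeq \bF_p^{\,k}$ (your case $K=1$). The paper does this with light tools: bijectivity of the crossed homomorphism gives $[S_n:\ker\overline{\fg}]=p^m$, so the index statement of \cite{RG} (Lemma~\ref{An lem}) forces $n=p^m$ or $n=2^{m-1}$, and then elementary comparisons of $p$-adic valuations of $|S_n|$ and $|\GL_m(\bZ/p\bZ)|$ (Lemma~\ref{val lem} and (\ref{m=4})) show $S_n$ cannot embed into $\Aut(N/M)$ at all. You instead use (i) the bound $k\ge n-2$ for a faithful $\bF_p$-representation of $S_n$, and (ii) the classification of prime-power-index subgroups of $S_n$ (index $\le 2$ or a point stabilizer), applied twice. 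Both statements are true, but as written they are the soft spots of your argument: (i) is Wagner's theorem on minimal faithful modular degrees, not a fact one should call classical and leave unreferenced --- note the analogous bound fails for $A_7$ and $A_8$ over $\bF_2$, so the small-$n$ verification for $S_n$ genuinely needs the literature; and (ii) is only sketched via O'Nan--Scott and Jordan's theorem, with the primitive case and exceptional $n$ waved at rather than proved. The quickest repair is to observe that (ii) follows from the full statement of the very reference \cite{RG} (prime-power-index subgroups of $A_n$, $n\ge5$, are point stabilizers), making the O'Nan--Scott detour unnecessary; alternatively, adopt the paper's valuation argument, which avoids representation theory entirely and needs only the weak index form of \cite{RG}.
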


Let us note that conditions $(1)$ and $(2)$ in Theorem~\ref{sym thm} cannot be removed because $e(S_n,S_n)\geq1$ and $e(S_n,A_n\times\{\pm1\})\geq1$ when $n\geq5$; see \cite{Childs simple} for the exact values of these two numbers.

%Let us remark that for $n\geq 5$, it was shown in \cite{Childs simple} that
%\[e(S_n,A_n\times\{\pm1\}) = 2\cdot\#\{\sigma\in S_n\setminus A_n: \sigma\mbox{ has order two}\},\]
%and so $S_n$ has an affirmative answer to Question~\ref{Q2} for at least one $N\not\simeq S_n$. But Theorem~\ref{sym thm} suggests that $e(S_n,N)=0$ for many $N\not\simeq S_n$. For example, suppose that $N$ is a nilpotent group of order $n!$, and let $p$ be any odd prime dividing $n!$. Let $P$ denote the Sylow $p$-subgroup of $N$ and write $\Phi(P)$ for its Frattini subgroup. We then have $N = P\times P'$ for another subgroup $P'$ of $N$, and $M = \Phi(P)\times P'$ is a proper maximal characteristic subgroup of $N$ such that $N/M\simeq \bZ/p\bZ$. It follows that $e(S_n,N)=0$.

\section{Regular subgroups of the holomorph}\label{Hol sec}

Throughout this section, let $G$ and $N$ denote two groups having the same order. Recall that given $\ff\in\Hom(G,\Aut(N))$, a map $\fg\in\Map(G,N)$ is said to be a \emph{crossed homomorphism with respect to $\ff$} if 
\[ \fg(\sigma_1\sigma_2) = \fg(\sigma_1)\cdot \ff(\sigma_1)(\fg(\sigma_2))\mbox{ for all }\sigma_1,\sigma_2\in G.\]
In general $\fg$ is not a group homomorphism, but for any $\sigma\in G$, we have
\begin{equation}\label{cyclic fg}\fg(\sigma^k) = \prod_{i=0}^{k-1}\ff(\sigma)^i(\fg(\sigma)) \mbox{ and in particular }\fg(\sigma^{e_\sigma k}) = \fg(\sigma^{e_\sigma})^k\end{equation}
for all $k\in\bN$, where $e_\sigma$ denotes the order of $\ff(\sigma)$. We shall write $Z_\ff^1(G,N)$ for the set of all such maps, and $Z_\ff^1(G,N)^*$ for the subset consisting of those which are bijective. 

\begin{prop}\label{corr thm}
For $\ff\in\Map(G,\Aut(N))$ and $\fg\in\Map(G,N)$, define
\[\beta_{(\ff,\fg)} : G\longrightarrow \Hol(N); \hspace{1em} \beta_{(\ff,\fg)}(\sigma) = \rho(\fg(\sigma))\cdot \ff(\sigma).\]
Then, we have
\begin{align*}
\Map(G,\Hol(N)) &= \{\beta_{(\ff,\fg)}:\ff\in\Map(G,\Aut(N))\mbox{ and }\fg\in\Map(G,N)\},\\
\Hom(G,\Hol(N)) &= \{ \beta_{(\ff,\fg)}: \ff\in\Hom(G,\Aut(N))\mbox{ and }\fg\in Z_\ff^1(G,N)\},\\
\Reg(G,\Hol(N)) &= \{ \beta_{(\ff,\fg)}: \ff\in\Hom(G,\Aut(N))\mbox{ and }\fg\in Z_\ff^1(G,N)^*\}.\end{align*}
\end{prop}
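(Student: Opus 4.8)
The plan is to exploit the semidirect product decomposition \eqref{Hol(N)}, under which every element of $\Hol(N)$ is written \emph{uniquely} as $\rho(x)\varphi$ with $x\in N$ and $\varphi\in\Aut(N)$, together with the conjugation formula $\varphi\rho(x)\varphi^{-1}=\rho(\varphi(x))$ (which one checks directly from $\rho(x):\tau\mapsto\tau x^{-1}$) and the fact that $\rho:N\longrightarrow\Perm(N)$ is a group homomorphism. The first claimed equality is then immediate: given any $\beta\in\Map(G,\Hol(N))$, for each $\sigma\in G$ write $\beta(\sigma)=\rho(\fg(\sigma))\ff(\sigma)$ with $\fg(\sigma)\in N$ and $\ff(\sigma)\in\Aut(N)$ uniquely determined; this produces $\ff\in\Map(G,\Aut(N))$ and $\fg\in\Map(G,N)$ with $\beta=\beta_{(\ff,\fg)}$, and conversely every such pair visibly yields a map $G\longrightarrow\Hol(N)$.

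For the second equality, I would compute, for $\sigma_1,\sigma_2\in G$,
\[ \beta_{(\ff,\fg)}(\sigma_1)\beta_{(\ff,\fg)}(\sigma_2)=\rho(\fg(\sigma_1))\,\ff(\sigma_1)\,\rho(\fg(\sigma_2))\,\ff(\sigma_2)=\rho\bigl(\fg(\sigma_1)\cdot\ff(\sigma_1)(\fg(\sigma_2))\bigr)\,\ff(\sigma_1)\ff(\sigma_2),\]
where the second equality uses the conjugation formula to move $\ff(\sigma_1)$ past $\rho(\fg(\sigma_2))$ and then that $\rho$ is a homomorphism. Comparing this with $\beta_{(\ff,\fg)}(\sigma_1\sigma_2)=\rho(\fg(\sigma_1\sigma_2))\,\ff(\sigma_1\sigma_2)$ and invoking uniqueness of the decomposition \eqref{Hol(N)}, one sees that $\beta_{(\ff,\fg)}$ is a homomorphism precisely when $\ff(\sigma_1\sigma_2)=\ff(\sigma_1)\ff(\sigma_2)$ and $\fg(\sigma_1\sigma_2)=\fg(\sigma_1)\cdot\ff(\sigma_1)(\fg(\sigma_2))$ hold for all $\sigma_1,\sigma_2$, that is, precisely when $\ff\in\Hom(G,\Aut(N))$ and $\fg\in Z_\ff^1(G,N)$.

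For the third equality, since $\Reg(G,\Hol(N))\subseteq\Hom(G,\Hol(N))$ by definition, it remains only to single out, among the $\beta_{(\ff,\fg)}\in\Hom(G,\Hol(N))$, those with regular image. The key observation is that evaluation at $1_N$ sends each $\varphi\in\Aut(N)$ to $\varphi(1_N)=1_N$ and sends $\rho(x)$ to $x^{-1}$, so composing $\beta_{(\ff,\fg)}$ with the evaluation map $\pi\mapsto\pi(1_N)$ from $\Hol(N)$ to $N$ yields the map $\sigma\mapsto\fg(\sigma)^{-1}$; note that this composite is also $\xi_N$ restricted to $\beta_{(\ff,\fg)}(G)$, precomposed with $\beta_{(\ff,\fg)}$. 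If $\fg$ is bijective, then $\sigma\mapsto\fg(\sigma)^{-1}$ is a bijection $G\longrightarrow N$, which forces $\beta_{(\ff,\fg)}$ to be injective, hence $|\beta_{(\ff,\fg)}(G)|=|G|=|N|$, and then $\xi_N$ restricted to $\beta_{(\ff,\fg)}(G)$ is a surjection between finite sets of the same cardinality, hence bijective; thus $\beta_{(\ff,\fg)}(G)$ is regular. Conversely, if $\beta_{(\ff,\fg)}(G)$ is regular then $|\beta_{(\ff,\fg)}(G)|=|N|=|G|$ makes $\beta_{(\ff,\fg)}$ injective, so $\sigma\mapsto\fg(\sigma)^{-1}$ is a composite of two bijections and hence bijective, whence $\fg\in Z_\ff^1(G,N)^*$.

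None of the steps is genuinely deep; the real care lies in bookkeeping with conventions — in particular pinning down once and for all the conjugation action of $\Aut(N)$ on $\rho(N)$ inside $\Perm(N)$, as well as the composition convention for permutations, since these fix the exact shape of the crossed-homomorphism identity and of the map $\sigma\mapsto\fg(\sigma)^{-1}$. The one mild subtlety, in the third equality, is that one must not assume $\beta_{(\ff,\fg)}$ injective a priori but instead derive injectivity from either the bijectivity of $\fg$ or the regularity of $\beta_{(\ff,\fg)}(G)$, using the standing hypothesis $|G|=|N|$.
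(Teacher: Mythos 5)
Your proof is correct and follows essentially the same route as the paper: the unique decomposition $\Hol(N)=\rho(N)\rtimes\Aut(N)$ for the first equality, the product computation $\rho(\eta_1)\varphi_1\cdot\rho(\eta_2)\varphi_2=\rho(\eta_1\varphi_1(\eta_2))\varphi_1\varphi_2$ for the second, and evaluation at $1_N$ giving $\sigma\mapsto\fg(\sigma)^{-1}$ for the third. You simply spell out in more detail the equivalence between regularity of $\beta_{(\ff,\fg)}(G)$ and bijectivity of $\fg$, which the paper leaves as "clear."
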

\begin{proof}The first equality is a direct consequence of (\ref{Hol(N)}). The second equality may be easily verified using the fact that
\[\rho(\eta_1)\varphi_1\cdot\rho(\eta_2)\varphi_2 = \rho(\eta_1)\varphi_1\rho(\eta_2)\varphi_1^{-1}\cdot\varphi_1\varphi_2 = \rho(\eta_1\varphi_1(\eta_2))\cdot\varphi_1\varphi_2\]
for $\eta_1,\eta_2\in N$ and $\varphi_1,\varphi_2\in\Aut(N)$. The third equality is then clear because 
\[ (\beta_{(\ff,\fg)}(\sigma))(1_N) = (\rho(\fg(\sigma))\cdot\ff(\sigma))(1_N) = \rho(\fg(\sigma))(1_N) =  \fg(\sigma)^{-1}\]
for $\sigma\in G$. This proves the proposition.
\end{proof}

%For any $\ff_1,\ff_2\in\Map(G,\Aut(N))$ and $\fg_1,\fg_2\in\Map(G,N)$, we have
%\[ \beta_{(\ff_1,\fg_1)} = \beta_{(\ff_2,\fg_2)}\mbox{ if and only if }(\ff_1,\fg_1) = (\ff_2,\fg_2) \]
%by (\ref{Hol(N)}). Hence, we in turn have
%\[  \#\Reg(G,\Hol(N)) = \#\left\{ (\ff,\fg):\ff\in \Hom(G,\Aut(N))\mbox{ and }\fg\in Z_\ff^1(G,N)^*\right\}\]
%by the third equality in Proposition~\ref{corr thm}. 

%\vspace{1.5mm}

In the rest of this section, let $\ff\in\Hom(G,\Aut(N))$ be fixed, and we shall consider some examples of $\fg\in Z_\ff^1(G,N)$.

\subsection{The trivial action}\label{trivial sec}Let $\ff_0\in\Hom(G,\Aut(N))$ be the trivial homomorphism, and note that $Z_{\ff_0}^1(G,N) = \Hom(G,N)$. For $N\not\simeq G$, we then deduce that $Z_{\ff_0}^1(G,N)^* = \emptyset$. As for $N=G$, we easily see from (\ref{Hol(N)}) that
\begin{equation}\label{must rho}
\mbox{for $\fg\in Z_\ff^1(G,G)^*:$}\hspace{1em} \beta_{(\ff,\fg)}(G) = \rho(G)\mbox{ if and only if }\ff = \ff_0.\end{equation}
Hence, the case when $\ff = \ff_0$ only gives rise to the regular subgroup $\rho(G)$.

\subsection{Principal crossed homomorphisms} Given any $\eta\in N$, it is natural to consider its associated \emph{principal} crossed homomorphism, defined by
\[ \fg_\eta\in Z_\ff^1(G,N); \hspace{1em}\fg_\eta(\sigma) = \eta^{-1}\cdot\ff(\sigma)(\eta).\]
Unfortunately, this map is never bijective, unless $G$ and $N$ are trivial. Indeed, viewing $N$ as a $G$-set via the homomorphism $\ff$, it is easy to check that
\[ \fg_\eta\mbox{ is injective if and only if }\Stab_{G}(\eta) = \{1_G\}.\]
In this case, since $|G| = |N|$, by the orbit-stabilizer theorem, we must have
\[ \{\ff(\sigma)(\eta) : \sigma\in G\} = N,\mbox{ and so }\ff(\sigma)(\eta) = 1_N\mbox{ for some }\sigma\in G.\]
This implies that $\eta = 1_N$, but $\fg_{1_N}$ is not bijective unless $G$ and $N$ are trivial.

\subsection{Action via inner automorphisms}For $\eta\in N$, we shall write
\[\conj(\eta) = \rho(\eta)\lambda(\eta)\mbox{ as well as }\conj(\eta Z(N)) = \conj(\eta).\]
The latter is plainly well-defined, and note that $\Inn(N)\simeq N/Z(N)$ via $\conj$. In the case that $\ff(G)\subset\Inn(N)$, elements in $Z_\ff^1(G,N)^*$ turn out to be closely related to certain \emph{fixed point free} pairs of homomorphisms. This connection was first observed by N. P. Byott and L. N. Childs in \cite{Byott Childs}; see the discussion at the end of Section~\ref{liftable sec}.

\begin{definition}\label{fpf pair def}A pair $(f,g)$, where $f,g\in\Hom(G,N)$, is \emph{fixed point free} if
\[ f(\sigma) = g(\sigma) \mbox{ holds precisely when }\sigma = 1_G.\]
Since $|G| = |N|$, this is easily seen to be equivalent to that the map $G\longrightarrow N$ given by $\sigma\mapsto f(\sigma)g(\sigma)^{-1}$ is bijective; see \cite[Proposition 1]{Byott Childs}, for example.
\end{definition}

We shall further make the following definition.

\begin{definition}\label{wfpf def}A pair $(f,g)$, where $f,g\in\Hom(G,N/Z(N))$, is \emph{weakly fixed point free} if the map $G\longrightarrow N/Z(N)$ given by $\sigma\mapsto f(\sigma)g(\sigma)^{-1}$ is surjective.
\end{definition}

\subsubsection{Liftable inner actions}\label{liftable sec} In what follows, assume that
\[\label{liftable hyp}\mbox{there exists $f\in\Hom(G,N)$ with }\ff(\sigma) = \conj(f(\sigma))\mbox{ for all $\sigma\in G$}.\]
This implies that $\ff(G)\subset \Inn(N)$ but the converse is false in general. Put
\[\Hom_f(G,N)^* = \{g\in\Hom(G,N): (f,g)\mbox{ is fixed point free}\}.\]
Then, we have:

\begin{prop}\label{fpf corr}The maps
\begin{align}\label{fpf map1}Z_\ff^1(G,N)& \longrightarrow \Hom(G,N);&\hspace{-2cm}\fg\mapsto (\sigma\mapsto \fg(\sigma)f(\sigma))\\\notag
Z_\ff^1(G,N)^* &\longrightarrow \Hom_f(G,N)^*;&\hspace{-2cm}\fg\mapsto (\sigma\mapsto \fg(\sigma)f(\sigma))
\end{align}
are well-defined bijections.
\end{prop}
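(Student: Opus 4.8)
The plan is to verify that the assignment $\fg \mapsto (\sigma \mapsto \fg(\sigma)f(\sigma))$ really does land in $\Hom(G,N)$, that it restricts correctly to the starred subsets, and that it is a bijection. First I would check well-definedness of the first map. Given $\fg \in Z_\ff^1(G,N)$, set $g(\sigma) = \fg(\sigma)f(\sigma)$ and compute
\[ g(\sigma_1\sigma_2) = \fg(\sigma_1\sigma_2)f(\sigma_1\sigma_2) = \fg(\sigma_1)\cdot\ff(\sigma_1)(\fg(\sigma_2))\cdot f(\sigma_1)f(\sigma_2). \]
Since $\ff(\sigma_1) = \conj(f(\sigma_1))$ acts as $\eta \mapsto f(\sigma_1)\eta f(\sigma_1)^{-1}$, the middle factor $\ff(\sigma_1)(\fg(\sigma_2))\cdot f(\sigma_1)$ collapses to $f(\sigma_1)\cdot\fg(\sigma_2)$, and the whole expression becomes $\fg(\sigma_1)f(\sigma_1)\cdot\fg(\sigma_2)f(\sigma_2) = g(\sigma_1)g(\sigma_2)$, so $g \in \Hom(G,N)$. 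Conversely, the same computation read backwards shows that if $g \in \Hom(G,N)$ then $\fg(\sigma) := g(\sigma)f(\sigma)^{-1}$ lies in $Z_\ff^1(G,N)$; this already exhibits the inverse map, proving the first map is a bijection. I should remark that this uses $\ff(\sigma) = \conj(f(\sigma))$ with the specific convention $\conj(\eta) = \rho(\eta)\lambda(\eta)$ fixed in the paper, so I would double-check that $\conj(\eta)$ sends $x$ to $\eta x \eta^{-1}$ (and not $\eta^{-1}x\eta$) under the given $\rho,\lambda$ conventions — this is the one place where a sign/order slip could creep in.

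Next I would show the two maps are compatible with the bijectivity condition, i.e.\ that $\fg$ is bijective as a map $G \to N$ if and only if $g = \fg \cdot f$ (pointwise product) is such that $(f,g)$ is fixed point free. By Definition~\ref{fpf pair def}, $(f,g)$ is fixed point free exactly when $\sigma \mapsto g(\sigma)f(\sigma)^{-1}$ is a bijection $G \to N$; but $g(\sigma)f(\sigma)^{-1} = \fg(\sigma)f(\sigma)f(\sigma)^{-1} = \fg(\sigma)$, so $(f,g)$ fixed point free $\iff$ $\fg$ bijective. Wait — I must be careful about which of $f(\sigma)g(\sigma)^{-1}$ versus $g(\sigma)f(\sigma)^{-1}$ Definition~\ref{fpf pair def} uses; it uses $\sigma \mapsto f(\sigma)g(\sigma)^{-1}$. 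That map is the pointwise inverse of $\sigma \mapsto g(\sigma)f(\sigma)^{-1}$, and inversion is a bijection of $N$, so one is bijective iff the other is; hence the equivalence still holds. Combining with the first paragraph, $\fg \in Z_\ff^1(G,N)^*$ corresponds precisely to $g \in \Hom_f(G,N)^*$, and the restricted map is a bijection with inverse $g \mapsto (\sigma \mapsto g(\sigma)f(\sigma)^{-1})$.

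I do not anticipate a serious obstacle here — the proof is essentially the computation in the first paragraph plus bookkeeping. The only real subtlety, and the step I would be most careful about, is keeping the conjugation convention straight: one must confirm that with $\rho(\eta)$ acting by right multiplication by $\eta^{-1}$ and $\lambda(\eta)$ by left multiplication by $\eta$, the composite $\conj(\eta) = \rho(\eta)\lambda(\eta)$ (or $\lambda(\eta)\rho(\eta)$, depending on how composition is ordered in $\Perm(N)$) is $x \mapsto \eta x \eta^{-1}$, so that the cancellation $\ff(\sigma_1)(\fg(\sigma_2))\cdot f(\sigma_1) = f(\sigma_1)\fg(\sigma_2)$ is valid; were the convention the opposite, one would instead pair $\fg$ with $f^{-1}$ or reverse the product, and the statement as written would need the orientation chosen in the paper. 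Accordingly, the write-up will open by recording this cancellation identity explicitly, then present the forward computation, then read it backwards for the inverse, and finally handle the starred subsets via Definition~\ref{fpf pair def} as above.
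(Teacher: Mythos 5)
Your proposal is correct and follows essentially the same route as the paper: the same forward computation using $\ff(\sigma_1)=\conj(f(\sigma_1))$ to show the image is a homomorphism, the same explicit inverse $g\mapsto(\sigma\mapsto g(\sigma)f(\sigma)^{-1})$, and the same identification of bijective $\fg$ with fixed point free pairs via Definition~\ref{fpf pair def}. Your extra care about the $\conj$ convention and about $f(\sigma)g(\sigma)^{-1}$ versus $g(\sigma)f(\sigma)^{-1}$ is sound (the paper treats both points as immediate) and changes nothing in substance.
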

\begin{proof}First, let $\fg\in Z_\ff^1(G,N)$. Then, for $\sigma_1,\sigma_2\in G$, we have
\begin{align*}
\fg(\sigma_1\sigma_2)f(\sigma_1\sigma_2)
& = \fg(\sigma_1)\conj(f(\sigma_1))(\fg(\sigma_2))\cdot f(\sigma_1)f(\sigma_2)\\\notag
& = \fg(\sigma_1)f(\sigma_1)\cdot \fg(\sigma_2)f(\sigma_2).
\end{align*}
This shows that the first map is well-defined. Next, let $g\in \Hom(G,N)$, and define $\fg(\sigma) = g(\sigma)f(\sigma)^{-1}$. Then, for $\sigma_1,\sigma_2\in G$, we have
\begin{align*}
\fg(\sigma_1\sigma_2) & = g(\sigma_1\sigma_2)f(\sigma_1\sigma_2)^{-1}\\
%& = g(\sigma_1)f(\sigma_1)^{-1}\cdot f(\sigma_1)g(\sigma_2)f(\sigma_2)^{-1}f(\sigma_1)^{-1}\\
& = g(\sigma_1)f(\sigma_1)^{-1}\conj(f(\sigma_1))(g(\sigma_2)f(\sigma_2)^{-1})\\
& = \fg(\sigma_1)\cdot\ff(\sigma_1)(\fg(\sigma_2)).
\end{align*}
This shows the first map, which is plainly injective, is also surjective. From Definition~\ref{fpf pair def}, it is clear that $\fg$ is bijective if and only if $(f,g)$ is fixed point free. Hence, the second map is also a well-defined bijection.
\end{proof}

Let $\fg\in Z_\ff^1(G,N)$ and let $g\in\Hom(G,N)$ be its image under (\ref{fpf map1}). Then, for any $\sigma\in G$, we may rewrite
\[\beta_{(\ff,\fg)}(\sigma) = \rho(g(\sigma)f(\sigma)^{-1})\cdot\conj(f(\sigma)) = \rho(g(\sigma))\lambda(f(\sigma)) = \beta_{(f,g)}(\sigma),\]
where $\beta_{(f,g)}$ is the homomorphism defined as in \cite[Sections 2 and 3]{Byott Childs}. Hence, we may view Propositions~\ref{corr thm} and~\ref{fpf corr} as generalizations of \cite[Corollary 7]{Byott Childs}.

\subsubsection{General inner actions}\label{gen inn}  In what follows, assume that
\[\mbox{there exists $f\in\Hom(G,N/Z(N))$ with }\ff(\sigma) = \conj(f(\sigma))\mbox{ for all $\sigma\in G$}.\]
This implies that $\ff(G)\subset \Inn(N)$ and the converse is also true. Put
\begin{align*}\Hom_f(G,N/Z(N))^*& = \{g\in\Hom(G,N/Z(N)): (f,g)\mbox{ is}\\
&\hspace{3.5cm}\mbox{weakly fixed point free}\}.\end{align*}
Then, essentially the same argument as in Proposition~\ref{fpf corr} shows that:

\begin{prop}\label{wfpf corr}The maps
\begin{align}\label{fpf map2} Z_\ff^1(G,N)& \longrightarrow \Hom(G,N/Z(N));&\hspace{-0.5cm}\fg\mapsto (\sigma\mapsto \fg(\sigma)Z(N)\cdot f(\sigma))\\\notag
Z_\ff^1(G,N)^* &\longrightarrow \Hom_f(G,N/Z(N))^*;&\hspace{-0.5cm}\fg\mapsto (\sigma\mapsto \fg(\sigma)Z(N)\cdot f(\sigma))
\end{align}
are well-defined.
\end{prop}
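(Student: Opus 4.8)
The plan is to mimic the proof of Proposition~\ref{fpf corr}, replacing $N$ by $N/Z(N)$ throughout and the notion of fixed point free by weakly fixed point free. Write $\pi:N\longrightarrow N/Z(N)$ for the quotient map, and recall that $\conj$ factors through $\pi$, so that the hypothesis $\ff(\sigma)=\conj(f(\sigma))$ with $f\in\Hom(G,N/Z(N))$ makes sense: here $\conj(f(\sigma))$ means $\conj$ applied to any representative of the coset $f(\sigma)$, which is well-defined. The only genuinely new point compared with Proposition~\ref{fpf corr} is that the assignment $\fg\mapsto(\sigma\mapsto\fg(\sigma)Z(N)\cdot f(\sigma))$ need not be injective or surjective, which is why the statement only asserts that the two maps are well-defined; so the proof is lighter than that of Proposition~\ref{fpf corr} and consists purely of the two verifications below.

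First I would verify that the top map lands in $\Hom(G,N/Z(N))$. Fix $\fg\in Z_\ff^1(G,N)$ and set $h(\sigma)=\pi(\fg(\sigma))\cdot f(\sigma)\in N/Z(N)$. For $\sigma_1,\sigma_2\in G$, apply the crossed homomorphism identity $\fg(\sigma_1\sigma_2)=\fg(\sigma_1)\cdot\ff(\sigma_1)(\fg(\sigma_2))$, then push through $\pi$ and multiply on the right by $f(\sigma_1\sigma_2)=f(\sigma_1)f(\sigma_2)$:
\[
h(\sigma_1\sigma_2)=\pi(\fg(\sigma_1))\,\pi\bigl(\conj(f(\sigma_1))(\fg(\sigma_2))\bigr)\,f(\sigma_1)f(\sigma_2).
\]
Since $\pi\bigl(\conj(\eta)(x)\bigr)=\pi(\eta)\pi(x)\pi(\eta)^{-1}$ for all $\eta,x\in N$ — conjugation in $N$ descends to conjugation in $N/Z(N)$ — the middle factor equals $f(\sigma_1)\,\pi(\fg(\sigma_2))\,f(\sigma_1)^{-1}$, and after cancellation this collapses to $h(\sigma_1)h(\sigma_2)$. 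Hence $h\in\Hom(G,N/Z(N))$ and the first map in (\ref{fpf map2}) is well-defined.

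Next I would check that the bottom map in (\ref{fpf map2}) is well-defined, i.e.\ that if $\fg$ is in addition bijective then the associated $h$ defines a \emph{weakly} fixed point free pair $(f,h)$. By Definition~\ref{wfpf def} this amounts to the surjectivity of $\sigma\mapsto f(\sigma)h(\sigma)^{-1}$; but $f(\sigma)h(\sigma)^{-1}=f(\sigma)f(\sigma)^{-1}\pi(\fg(\sigma))^{-1}=\pi(\fg(\sigma))^{-1}=\pi(\fg(\sigma)^{-1})$, so this map is $\pi\circ(\text{inverse in }N)\circ\fg$, a composite of the bijection $\fg$, the bijection $\eta\mapsto\eta^{-1}$ on $N$, and the surjection $\pi$; hence it is surjective. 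Therefore $(f,h)$ is weakly fixed point free, $h\in\Hom_f(G,N/Z(N))^*$, and both maps are well-defined, completing the proof. I do not expect any real obstacle here: the one thing to be careful about is that $h$ is built from \emph{representatives} $\fg(\sigma)$ but $f(\sigma)$ is only a coset, so every manipulation must take place in $N/Z(N)$ after applying $\pi$, and one should note that $\conj$ is insensitive to the choice of representative precisely because $Z(N)$ is central — this is the single fact that makes the whole argument go through.
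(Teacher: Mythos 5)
Your proof is correct and follows exactly the route the paper intends: the paper simply states that ``essentially the same argument as in Proposition~\ref{fpf corr}'' applies, and your verification (the crossed-homomorphism identity pushed through the quotient $N\to N/Z(N)$, plus the observation that $\sigma\mapsto f(\sigma)g(\sigma)^{-1}=\pi(\fg(\sigma)^{-1})$ is surjective when $\fg$ is bijective) is precisely that argument carried out in detail, including the correct care about $\conj$ being independent of the choice of coset representative.
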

%\begin{proof}The same calculation as in  Proposition~\ref{fpf corr} shows that the first map is well-defined. The second map is then well-defined by Definition~\ref{wfpf def}.\end{proof}

However, the maps in Proposition~\ref{wfpf corr}, unlike those in Proposition~\ref{fpf corr}, are neither injective nor surjective in general.

\vspace{1.5mm}

Let $\fg\in Z_\ff^1(G,N)$ and let $g\in \Hom_f(G,N/Z(N))$ be its image under (\ref{fpf map2}). For any $\sigma\in G$, letting $\widetilde{f}(\sigma)\in N$ be an element such that $f(\sigma) = \widetilde{f}(\sigma)Z(N)$, we may then rewrite
\[ \beta_{(\ff,\fg)}(\sigma) = \rho(\fg(\sigma))\cdot\conj(\widetilde{f}(\sigma)) = \rho(\fg(\sigma)\widetilde{f}(\sigma))\lambda(\widetilde{f}(\sigma)).\]
Observe that $\rho(\eta) = \lambda(\eta)^{-1}$ for $\eta\in Z(N)$, and let $g_0\in \Hom(G,N/Z(N))$ be the trivial homomorphism. Then, we see that $\beta_{(\ff,\fg)}(G)\subset \lambda(N)$ when $g=g_0$. Thus, for $N\not\simeq G$, we have $g\neq g_0$ whenever $\fg$ is bijective. As for $N = G$, it is also easy to verify that
\begin{equation}\label{must lambda}
\mbox{for $\fg\in Z_\ff^1(G,G)^*:$}\hspace{1em}\beta_{(\ff,\fg)}(G) = \lambda(G)\mbox{ if and only if }g = g_0.
\end{equation}
 This is analogous to the discussion in Section~\ref{trivial sec}.

\section{Applications: isomorphic type}\label{app sec1}

%In this section, we shall prove Theorems~\ref{abelian thm} and~\ref{QS thm}.

\subsection{Abelian groups}\label{abelian sec} Let $A$ be an abelian group. 

\subsubsection{Backward implication} Suppose that
\[\label{|A| exp} |A| = 2^\delta p_1\cdots p_m,\mbox{ where $p_1<\cdots <p_m$ are odd primes and }\delta\in\{0,1,2\}.\]
To prove the backward implication of Theorem~\ref{abelian thm}, consider
\[ \ff \in \Hom(A,\Aut(A)) \mbox{ and } \fg\in Z_\ff^1(A,A).\]
By (\ref{Byott formula}) and Proposition~\ref{corr thm}, it is enough to show that
\begin{equation}\label{cyclic thm'}\beta_{(\ff,\fg)}(A) = \rho(A)\mbox{ whenever $\fg$ is bijective}.\end{equation}
Notice that the hypothesis on $A$ implies $A = A_0\times A_\sigma$, where $A_\sigma = \langle\sigma\rangle$ with $\sigma\in A$ an element of order $n = p_1\cdots p_m$, and $A_0$ is isomorphic to one of
\begin{equation}\label{A0} \{1\},\, \bZ/2\bZ,\, \bZ/4\bZ,\, \bZ/2\bZ\times \bZ/2\bZ.\end{equation}
Since $A_0$ and $A_\sigma$ are characteristic subgroups of $A$, their preimages $\fg^{-1}(A_0)$ and $\fg^{-1}(A_\sigma)$ are subgroups of $A$ as well. We then deduce that 
\begin{equation}\label{g(A)}
\fg(A_0) = A_0\mbox{ and }\fg(A_\sigma) = A_\sigma\mbox{ whenever $\fg$ is bijective}.\end{equation}
Observe that $\Aut(A) = \Aut(A_0)\times \Aut(A_\sigma)$. We have the following lemmas.

\begin{lem}\label{squarefree lem1}If $\fg$ is bijective, then $\ff(A_0)|_{A_0} = \{\mbox{Id}_{A_0}\}$.
\end{lem}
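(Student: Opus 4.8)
\textbf{Proof proposal for Lemma~\ref{squarefree lem1}.}

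The plan is to exploit the structure $A = A_0 \times A_\sigma$ with $\gcd(|A_0|, |A_\sigma|) = 1$ (since $|A_0|$ is a power of $2$ and $n = p_1\cdots p_m$ is odd), together with the fact that $A_\sigma$ is cyclic and $|A_\sigma| = n$ is \emph{squarefree}. Because $\Aut(A) = \Aut(A_0) \times \Aut(A_\sigma)$, for each $\tau \in A_0$ the automorphism $\ff(\tau)$ decomposes as $\ff(\tau) = \ff(\tau)|_{A_0} \times \ff(\tau)|_{A_\sigma}$, and what I must show is that the first component is trivial for all $\tau \in A_0$. First I would observe that by (\ref{g(A)}) the restriction $\fg|_{A_0}$ is a bijective crossed homomorphism $A_0 \to A_0$ with respect to the restricted action $\ff(-)|_{A_0}$, so without loss of generality the problem reduces to the case $A = A_0$, i.e.\ $A$ is one of the four groups in (\ref{A0}). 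For $A_0 \in \{\{1\}, \bZ/2\bZ\}$ the group $\Aut(A_0)$ is trivial and there is nothing to prove, so the real content is the cases $A_0 \cong \bZ/4\bZ$ and $A_0 \cong \bZ/2\bZ \times \bZ/2\bZ$.

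For $A_0 \cong \bZ/4\bZ$: here $\Aut(A_0) \cong \bZ/2\bZ$, generated by inversion $\iota$. Since $\ff \in \Hom(A_0, \Aut(A_0))$ and $A_0$ is cyclic of order $4$ while $\Aut(A_0)$ has order $2$, the image $\ff(A_0)$ is a quotient of $\bZ/4\bZ$ of order dividing $2$; the only nontrivial possibility is $\ff = $ the surjection sending a generator $\tau$ to $\iota$. I would then use (\ref{cyclic fg}): writing $e = e_\tau$ for the order of $\ff(\tau)$, we have $\fg(\tau^{ek}) = \fg(\tau^e)^k$. If $\ff(\tau) = \iota$ then $e = 2$, so $\fg(\tau^2) = \fg(\tau^2)^1$ trivially but more importantly $\fg(\tau^{2k}) = \fg(\tau^2)^k$, which for $k$ ranging over $\{0,1\}$ (as $\tau^2$ has order $2$) forces $\fg$ to take at most $|\langle\tau^2\rangle|$-many distinct values on the subgroup $\langle \tau^2\rangle$ — more to the point, I would compute $\fg(\tau^2) = \fg(\tau)\ff(\tau)(\fg(\tau)) = \fg(\tau)\cdot \iota(\fg(\tau)) = \fg(\tau)\fg(\tau)^{-1} = 1_{A_0}$ using that $A_0$ is abelian and $\iota$ is inversion, hence $\fg(\tau^2) = 1_{A_0}$. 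But $\tau^2 \neq 1_{A_0}$, contradicting injectivity of $\fg$. So $\ff(A_0)|_{A_0}$ must be trivial. For $A_0 \cong \bZ/2\bZ \times \bZ/2\bZ$: now $\Aut(A_0) \cong S_3$ has order $6$, while every element of $A_0$ has order dividing $2$; thus $\ff(A_0)$ is an elementary abelian $2$-group inside $S_3$, hence has order $1$ or $2$. In the order-$2$ case $\ff(A_0) = \langle\pi\rangle$ for some transposition $\pi \in S_3$; pick $\tau \in A_0$ with $\ff(\tau) = \pi$, and again by (\ref{cyclic fg}), since $e_\tau = 2$, we get $\fg(\tau^2) = \fg(\tau)\cdot\pi(\fg(\tau))$. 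Now $\tau^2 = 1_{A_0}$ so the left side is $\fg(1_{A_0}) = 1_{A_0}$, forcing $\fg(\tau) \cdot \pi(\fg(\tau)) = 1_{A_0}$, i.e.\ $\pi(\fg(\tau)) = \fg(\tau)^{-1} = \fg(\tau)$; thus $\fg(\tau)$ lies in the fixed subgroup of the transposition $\pi$ acting on $A_0 \cong \bZ/2\bZ\times\bZ/2\bZ$, which is a subgroup of order $2$. I then need to rule out bijectivity: I would similarly analyze $\fg$ on the remaining elements $\tau'$ with $\ff(\tau') = \mathrm{Id}$ (those form an index-$2$ subgroup), on which $\fg$ restricts to a homomorphism into $A_0$, and argue a counting/image obstruction forces $\fg$ to miss some element of $A_0$.

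The main obstacle I anticipate is the $\bZ/2\bZ\times\bZ/2\bZ$ case: unlike the cyclic cases, the relation $\fg(\sigma_1\sigma_2) = \fg(\sigma_1)\ff(\sigma_1)(\fg(\sigma_2))$ does not immediately collapse, and I need to keep careful track of how $\fg$ behaves on the kernel of $\ff$ versus off it. The cleanest route is probably: let $K = \ker(\ff) \subseteq A_0$ (index $1$ or $2$); then $\fg|_K \in \Hom(K, A_0)$, and for $\tau \notin K$ one has $\fg(\tau)\ff(\tau)(\fg(\tau)) = \fg(\tau^2) = \fg(1_{A_0}) = 1_{A_0}$. Combining "$\fg|_K$ is a homomorphism with image in a proper subgroup once $\ff$ is nontrivial" with "$\fg(\tau)$ is $\ff(\tau)$-antisymmetric" should pin down $|\fg(A_0)| < |A_0|$, contradicting bijectivity and yielding $\ff = \ff_0$ on $A_0$, hence $\ff(A_0)|_{A_0} = \{\mathrm{Id}_{A_0}\}$. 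I would also double-check the trivial reduction step — that $\fg|_{A_0}$ really is a crossed homomorphism for the restricted action — which follows since $A_0$ is characteristic and $\ff(A_0)$ preserves $A_0$, so the defining identity restricts verbatim.
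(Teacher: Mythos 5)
Your reduction to $A_0$ and your treatment of the $\bZ/4\bZ$ case are exactly the paper's argument (restrict via (\ref{g(A)}), then $\fg(\tau^2)=\fg(\tau)\cdot\ff(\tau)(\fg(\tau))=1_{A_0}$ kills injectivity since $\tau^2\neq 1$), so that part is fine as written. In the $\bZ/2\bZ\times\bZ/2\bZ$ case your route diverges slightly and you leave the endgame as a sketched ``counting/image obstruction,'' but it does close, and in one line: if $K=\ker$ of the restricted action has index $2$ and $\ff(\tau)|_{A_0}=\pi$ for $\tau\notin K$, then both elements $\tau,\tau\kappa$ outside $K$ satisfy $\tau^2=(\tau\kappa)^2=1_{A_0}$, so $\fg(\tau),\fg(\tau\kappa)\in\mathrm{Fix}(\pi)$, a subgroup of order $2$; since $\fg(1_{A_0})=1_{A_0}$, injectivity would require two distinct non-identity values inside a set with only one non-identity element, a contradiction (equivalently $|\fg(A_0)|\leq|\fg(K)|+1\leq 3<4$). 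The paper instead finishes this case by a change of basis making $\ff(\sigma_0)|_{A_0}$ the swap matrix, deducing $\fg(\sigma_0)=(1,1)$, invoking surjectivity to find $\tau_0$ with $\fg(\tau_0)=(1,0)$, and exhibiting the collision $\fg(\sigma_0+\tau_0)=\fg(\tau_0)$; your kernel-plus-fixed-subgroup version is basis-free and arguably a bit cleaner, but it is the same underlying mechanism (the crossed relation forces $\fg$ of an $\ff$-nontrivial involution into the fixed locus of the transposition). So: correct in substance, essentially the paper's proof, with only the final Klein-four count needing to be written out as above.
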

\begin{proof}For $|A_0|\leq 2$, it is clear that $\ff(A_0)|_{A_0} = \{\mbox{Id}_{A_0}\}$. For $|A_0|=4$, suppose that $\fg$ is bijective, and on the contrary that $\ff(\sigma_0)|_{A_0}\neq\mbox{Id}_{A_0}$ for some $\sigma_0\in A_0$.

\vspace{1.5mm}

First, assume that $A_0\simeq \bZ/4\bZ$. Without loss of generality, we may assume that $\sigma_0\in A_0$ is a generator. Note that $\ff(\sigma_0)(\tau) = \tau^{-1}$ for all $\tau\in A_0$, and so
\[ \fg(\sigma_0^2) = \fg(\sigma_0)\cdot \ff(\sigma_0)(\fg(\sigma_0)) = \fg(\sigma_0)\cdot\fg(\sigma_0)^{-1} = 1_{A_0},\]
by (\ref{g(A)}). But this contradicts that $\fg$ is bijective since $\sigma_0$ has order four.

\vspace{1.5mm}

Next, assume that $A_0\simeq \bZ/2\bZ\times \bZ/2\bZ$. Under this identification, applying a change of basis if necessary, we may assume that
\[\ff(\sigma_0)|_{A_0} = \begin{pmatrix} 0 & 1 \\ 1 &0\end{pmatrix},\mbox{ and so }\begin{pmatrix} 0 \\ 0\end{pmatrix} =  \fg(2\sigma_0) =  \fg(\sigma_0) + \begin{pmatrix} 0 & 1 \\ 1 &0\end{pmatrix}\fg(\sigma_0)\]
by (\ref{g(A)}). We must then have $\fg(\sigma_0) = (1,1)$. Now, again by (\ref{g(A)}), there exists $\tau_0\in A_0$ such that $\fg(\tau_0) = (1,0)$. But then
\[\fg(\sigma_0+\tau_0) = \fg(\sigma_0)+ \ff(\sigma_0)(\fg(\tau_0)) = \begin{pmatrix}1\\1 \end{pmatrix} + \begin{pmatrix} 0&1\\1&0\end{pmatrix}\begin{pmatrix}1\\0 \end{pmatrix}= \fg(\tau_0).\]
This contradicts that $\fg$ is bijective since plainly $\sigma_0$ is not the identity.
\end{proof}

\begin{lem}\label{squarefree lem2}If $\fg$ is bijective, then $\ff(A_\sigma)|_{A_\sigma} = \{\mbox{Id}_{A_\sigma}\}$.
\end{lem}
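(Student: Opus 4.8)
The plan is to mimic the structure of the proof of Lemma~\ref{squarefree lem1} but now working with the cyclic factor $A_\sigma = \langle\sigma\rangle$ of squarefree odd order $n = p_1\cdots p_m$. Since $\Aut(A) = \Aut(A_0)\times\Aut(A_\sigma)$, the automorphism $\ff(\sigma)|_{A_\sigma}$ acts on the cyclic group $A_\sigma \simeq \bZ/n\bZ$, hence is multiplication by some unit. By (\ref{g(A)}), assuming $\fg$ is bijective we have $\fg(A_\sigma) = A_\sigma$, so I may restrict the whole picture to $A_\sigma$: set $m_\sigma$ to be the order of $\ff(\sigma)|_{A_\sigma}$ and let $\tau$ be a generator of $A_\sigma$ with $\fg(\tau) = \tau$ itself possible to normalise (or at least work with $\fg(\tau) = \tau^j$). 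The key constraint is the one coming from (\ref{cyclic fg}): since $e_\tau$ divides $e_\sigma$, and more usefully, applying $\fg$ to powers of $\tau$ gives $\fg(\tau^k) = \prod_{i=0}^{k-1}\ff(\tau)^i(\fg(\tau))$.

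The main idea: suppose for contradiction that $\ff(\sigma_0)|_{A_\sigma} \neq \mathrm{Id}_{A_\sigma}$ for some $\sigma_0\in A_\sigma$; we may take $\sigma_0$ to have prime order $p = p_i$ for one of the primes (by replacing $\sigma_0$ by a suitable power, since the restricted action is nontrivial it must be nontrivial on some $\bZ/p\bZ$ factor of $A_\sigma$). Then $\ff(\sigma_0)|_{A_\sigma}$, viewed on the $\bZ/p\bZ$-component, is multiplication by some $a\not\equiv 1\pmod p$ whose order divides $p$; but the order of $a$ modulo $p$ divides $p-1$ and also divides the order of $\sigma_0$, which is $p$, forcing the order of $a$ to be $1$ — contradiction. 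Here is the crux: the order of $\ff(\sigma_0)$ divides $|\sigma_0|$ because $\ff$ is a homomorphism, and $|\sigma_0|$ is a product of the primes $p_1,\dots,p_m$, each occurring once; meanwhile $\ff(\sigma_0)$ restricted to the $p_i$-primary part of $A_\sigma$ has order dividing $|(\bZ/p_i\bZ)^\times| = p_i - 1$, which is coprime to $n$ (since $p_i - 1 < p_i$ and $n$ is squarefree with smallest prime factor... careful: $p_i-1$ may share a factor with some $p_j < p_i$).

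So the clean argument is instead via $\fg$: working inside $A_\sigma \simeq \bZ/n\bZ$ where $\fg$ is a bijection, decompose by CRT into $\bZ/p_i\bZ$ pieces, each of which is $\ff$-stable and $\fg$-respected. On the $\bZ/p_i\bZ$ piece, generated by an element $\sigma_i$ of order $p_i$, I compute $\fg(\sigma_i^{p_i}) = \fg(\sigma_i)^{p_i}$ using the second half of (\ref{cyclic fg}) only when $p_i = e_{\sigma_i}$, but more robustly $\fg(\sigma_i^{p_i}) = \prod_{j=0}^{p_i-1}\ff(\sigma_i)^j(\fg(\sigma_i)) = \fg(\sigma_i)^{1 + a + \cdots + a^{p_i-1}}$ where $a$ is the scalar of $\ff(\sigma_i)$ on this $\bZ/p_i\bZ$. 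This must equal $1_{A_\sigma}$, forcing $p_i \mid (1 + a + \cdots + a^{p_i-1})$ whenever $\fg(\sigma_i)$ generates the $p_i$-part (which bijectivity of $\fg$ on the $p_i$-piece guarantees for a suitable choice). If $a\not\equiv 1$, then $1 + a + \cdots + a^{p_i-1} = (a^{p_i}-1)/(a-1)$, and since $\mathrm{ord}(a) \mid p_i$ and $a\neq 1$ gives $\mathrm{ord}(a) = p_i$, so $p_i \mid p_i - 1$ in the exponent count... I would instead observe $a^{p_i} \equiv 1$ yet if $\mathrm{ord}(a)=p_i$ then $p_i \mid p_i-1$, impossible, so in fact $\mathrm{ord}(a)$ can only be $1$, i.e. $a\equiv 1\pmod{p_i}$, contradicting $\ff(\sigma_0)|_{A_\sigma}\neq\mathrm{Id}$.

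I expect the main obstacle to be bookkeeping: ensuring that after the CRT decomposition the bijectivity of $\fg$ on $A_\sigma$ really does descend to each $\bZ/p_i\bZ$-component so that one can choose $\sigma_i$ with $\fg(\sigma_i)$ of order exactly $p_i$, and handling the case where $\ff(\sigma_0)|_{A_\sigma}$ is nontrivial but acts trivially on the particular $p_i$-component — this is why reducing to an element $\sigma_0$ whose order is a \emph{single} prime $p_i$ on which the action is genuinely nontrivial is the right first move. Once that reduction is made the contradiction is the short order-theoretic computation above, parallel to (and simpler than) the $\bZ/4\bZ$ case of the previous lemma, since odd cyclic groups have no order-$2$ automorphism obstruction to worry about beyond what the coprimality of $p_i$ and $p_i-1$ already handles.
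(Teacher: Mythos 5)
There is a genuine gap, and it sits exactly at the point you flag as ``the right first move.'' You reduce to an element $\sigma_0\in A_\sigma$ of prime order $p_i$ and then analyse the scalar $a$ by which $\ff(\sigma_0)$ acts on the $\bZ/p_i\bZ$-component \emph{for that same prime}. But that restriction is trivial for free: since $\ff$ is a homomorphism, $\ff(\sigma_0)$ has order dividing $p_i$, while an automorphism of $\bZ/p_i\bZ$ has order dividing $p_i-1$, so $\mathrm{ord}(a)\mid\gcd(p_i,p_i-1)=1$ and $a\equiv1\pmod{p_i}$ automatically --- no use of $\fg$, bijectivity, or the divisibility $p_i\mid 1+a+\cdots+a^{p_i-1}$ is needed, and no contradiction with $\ff(\sigma_0)|_{A_\sigma}\neq\mathrm{Id}$ follows. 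The nontriviality you assumed lives on a \emph{different} component: an element of order $q$ can only act nontrivially on $\bZ/p_j\bZ$ when $q\mid p_j-1$, hence $p_j>q$. For example, with $n=3\cdot 7$ an element of order $3$ may act on the $\bZ/7\bZ$-part by an automorphism of order $3$; your componentwise computation (each $\bZ/p_i\bZ$ piece with its own generator) never detects this configuration, so the proposal only proves the vacuous ``diagonal'' statement and the desired conclusion is not reached. In short, the reduction ``take $\sigma_0$ of order $p_i$ acting nontrivially on the $p_i$-component'' cannot be made, and this is not bookkeeping but the entire content of the lemma.

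The paper's proof deals with precisely these cross-prime actions, and does so globally rather than component by component: writing $\ff(\sigma)(\tau)=\tau^d$ on $A_\sigma$ and letting $e=\mathrm{ord}_n(d)$ (which divides $n$, so $e=p_{i_1}\cdots p_{i_r}$), the crossed-homomorphism relation together with $\fg(A_\sigma)=A_\sigma$ gives $\fg(\sigma^{e(d-1)})=\fg(\sigma)^{d^e-1}=1$, and bijectivity of $\fg$ then forces the arithmetic condition $e(d-1)\equiv0\pmod n$. From this one shows each local order $e_i=\mathrm{ord}_{p_i}(d)$ divides both $p_i-1$ and $e/p_i$, and an ordering argument on $p_{i_1}<\cdots<p_{i_r}$ shows the largest prime $p_{i_r}$ cannot divide $e=\mathrm{lcm}(e_1,\dots,e_m)$, so $e=1$. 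If you want to salvage your approach, you must run a computation of this kind with a generator of all of $A_\sigma$ (or otherwise couple the components), not with generators of the individual $\bZ/p_i\bZ$ pieces.
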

\begin{proof}Let $d\in\bZ$ be coprime to $n$ such that $\ff(\sigma)(\tau) = \tau^d$ for all $\tau\in A_\sigma$, and let $e$ denote the multiplicative order of $d$ mod $n$. For each $1\leq i\leq m$, write $e_i$ for the multiplicative order of $d$ mod $p_i$. Then, we have $e = \mbox{lcm}(e_1,\dots, e_m)$. Since $e$ divides $n$, we may also write
\[ e = p_{i_1}\cdots p_{i_r}\mbox{ for some }1\leq i_1<\cdots<i_r\leq m,\]
where the product may be empty. We then deduce from (\ref{cyclic fg}) and (\ref{g(A)}) that
\[ \fg(\sigma^{e(d-1)}) = \fg(\sigma)^{(1+d+\cdots + d^{e-1}) (d-1)} = \fg(\sigma)^{d^e-1} =  1_{A_\sigma}.\]
Now, suppose that $\fg$ is bijective. The above implies that
\begin{equation}\label{ed cond}e(d-1) \equiv 0 \hspace{-2mm}\pmod{n}.\end{equation}
We shall use this to show that $\{i_1,\dots,i_r\}$ is in fact empty.

\vspace{1.5mm}

For $i\notin\{i_1,\dots,i_r\}$, we have $e_i=1$ by (\ref{ed cond}). For $i\in\{i_1,\dots,i_r\}$, we have 
\[d^{e/p_i} \equiv (d^{p_i})^{e/p_i}\equiv 1\hspace{-2mm}\pmod{p_i},\]
whence $e_i$ divides both $e/p_i$ and $p_i-1$. This implies that $e_i$ divides $p_{i_1}\cdots p_{i_{j-1}}$ when $i = i_j$. But then $p_{i_r}$ cannot divide $e$, which is a contradiction. It follows that $\{i_1,\dots,i_r\}$ must be empty. This shows that $e=1$ and so $\ff(\sigma)|_{A_\sigma} = \mbox{Id}_{A_\sigma}$, as claimed.
\end{proof}

\begin{proof}[Proof of Theorem~\ref{abelian thm}: backward implication] Suppose that $\fg$ is bijective. To prove (\ref{cyclic thm'}), by (\ref{must rho}) as well as Lemmas~\ref{squarefree lem1} and~\ref{squarefree lem2}, it suffices to show that
\begin{equation}\label{abelian thm'}\ff(A_\sigma)|_{A_0} = \{\mbox{Id}_{A_0}\}\mbox{ and }\ff(A_0)|_{A_\sigma} = \{\mbox{Id}_{A_\sigma}\}.\end{equation}
For any $\sigma_0\in A_0$ and $\tau\in A_\sigma$, we have
\[ \fg(\sigma_0)\cdot \ff(\sigma_0)(\fg(\tau)) = \fg(\sigma_0\tau) = \fg(\tau\sigma_0) = \fg(\tau)\cdot \ff(\tau)(\fg(\sigma_0)). \]
From (\ref{g(A)}), we then deduce that
\[ \fg(\sigma_0) = \ff(\tau)(\fg(\sigma_0)) \mbox{ and }\fg(\tau) = \ff(\sigma_0)(\fg(\tau)),\]
and in particular (\ref{abelian thm'}) indeed holds.
\end{proof}

\subsubsection{Forward implication}  Suppose that $|A|$ does not have the form stated in Theorem~\ref{abelian thm}. This means that $A = H\times H'$, where $H$ is a subgroup of $A$ isomorphic to one of the groups in the next lemma.

\begin{lem}\label{H lem}Suppose that $H$ is isomorphic to one of the following:
\begin{enumerate}[(1)]
\item $\bZ/p^n\bZ$ or $\bZ/p\bZ\times\bZ/p\bZ$, where $p$ is an odd prime and $n\geq2$, or
\item $\bZ/2^n\bZ$, where $n\geq3$, or 
\item $\bZ/2\bZ\times\bZ/2\bZ\times\bZ/2\bZ$ or $\bZ/2\bZ\times \bZ/4\bZ$, or
\item $\bZ/4\bZ\times \bZ/4\bZ$.
\end{enumerate}
Then, there is a regular subgroup of $\Hol(H)$ which is isomorphic to $H$ but is not equal to $\rho(H)$.
\end{lem}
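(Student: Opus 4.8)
The plan is to read off the desired subgroups directly from Proposition~\ref{corr thm}. By that proposition and~(\ref{must rho}), for each group $H$ in the list it suffices to exhibit a \emph{nontrivial} homomorphism $\ff\in\Hom(H,\Aut(H))$ together with a bijective crossed homomorphism $\fg\in Z_\ff^1(H,H)^*$: then $\beta_{(\ff,\fg)}(H)$ is a regular subgroup of $\Hol(H)$, it is isomorphic to $H$ (being of order $|H|$), and it is not $\rho(H)$ exactly because $\ff\neq\ff_0$. Writing $H=\langle e_1\rangle\times\cdots\times\langle e_r\rangle$, the datum $\fg$ is determined by the values $\fg(e_i)$, which may be prescribed freely subject only to the relations of $H$: by~(\ref{cyclic fg}) the relation $e_i^{\,\mathrm{ord}(e_i)}=1_H$ becomes $\prod_{k}\ff(e_i)^k(\fg(e_i))=1_H$, and $e_ie_j=e_je_i$ becomes $\fg(e_i)\ff(e_i)(\fg(e_j))=\fg(e_j)\ff(e_j)(\fg(e_i))$. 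So each case reduces to an explicit finite search, which I would carry out by hand.

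For the cyclic cases, namely $H=\bZ/p^n\bZ$ with $n\geq2$ in~(1) and $H=\bZ/2^n\bZ$ with $n\geq3$ in~(2), I would identify $\Aut(H)$ with $(\bZ/p^n\bZ)^\times$, fix a generator $\sigma$, and let $\ff(\sigma)$ be multiplication by $u:=1+p^{n-1}$ (respectively $u:=1+2^{n-1}$). Since $n\geq2$ (respectively $n\geq3$), this $u$ is a unit $\neq1$ of order $p$ (respectively $2$), so $\ff$ is a well-defined nontrivial homomorphism. I would then take $\fg(\sigma)=\sigma$; the congruence $1+u+\cdots+u^{p-1}\equiv p\pmod{p^n}$ (for $p=2$ the relevant fact being that $1+u=2(1+2^{n-2})$ has $2$-adic valuation $1$) shows both that $\fg$ is a genuine crossed homomorphism and that $\beta_{(\ff,\fg)}(\sigma)$ is the affine permutation $x\mapsto ux-1$ of $\bZ/p^n\bZ$, whose $p$-th (respectively square) power is translation by an element of order $p^{n-1}$. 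Hence $\beta_{(\ff,\fg)}(\sigma)$ has order $p^n$; being a permutation of order $p^n$ of the $p^n$-element set $\bZ/p^n\bZ$, it is a single $p^n$-cycle, so $\beta_{(\ff,\fg)}(H)=\langle\beta_{(\ff,\fg)}(\sigma)\rangle$ is regular, i.e. $\fg$ is bijective.

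For the non-cyclic cases I would work inside the relevant automorphism group with an explicit basis. For $H=\bZ/p\bZ\times\bZ/p\bZ$ with $p$ odd, take $\ff(\sigma)=\bigl(\begin{smallmatrix}1&1\\0&1\end{smallmatrix}\bigr)$ and $\ff(\tau)=I$ in $\GL_2(\bZ/p\bZ)$, and $\fg(\sigma)=\tau$, $\fg(\tau)=\sigma$; here $I+\ff(\sigma)+\cdots+\ff(\sigma)^{p-1}=0$ since $p(p-1)/2\equiv0\pmod p$ --- this is where oddness of $p$ enters --- so the relations hold, and $\fg$ is bijective because one coordinate of $\fg(\sigma^i\tau^j)$ recovers $i$ and the other then recovers $j$. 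For $H=\bZ/4\bZ\times\bZ/4\bZ$ the same shear would impose a constraint on $\fg(\sigma)$ and fails, so instead use the involution $\ff(\sigma)=\bigl(\begin{smallmatrix}1&2\\0&1\end{smallmatrix}\bigr)$ with $\ff(\tau)=I$ in $\GL_2(\bZ/4\bZ)$, again with $\fg(\sigma)=\tau$, $\fg(\tau)=\sigma$. For $H=\bZ/2\bZ\times\bZ/4\bZ$ with $|\sigma|=2$ and $|\tau|=4$, take $\ff(\sigma)=\mathrm{id}$ and $\ff(\tau)$ the order-$2$ automorphism fixing $\sigma$ with $\tau\mapsto\sigma\tau$, and $\fg(\sigma)=\sigma$, $\fg(\tau)=\tau$. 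For $H=\bZ/2\bZ\times\bZ/2\bZ\times\bZ/2\bZ$ with basis $e_1,e_2,e_3$, take $\ff(e_3)=I$, let $\ff(e_1)$ be the involution fixing $e_1,e_3$ with $e_2\mapsto e_2+e_3$, let $\ff(e_2)$ be the involution fixing $e_2,e_3$ with $e_1\mapsto e_1+e_3$, and put $\fg(e_i)=e_i$. In each instance I would check that $\ff$ is a nontrivial homomorphism, that the displayed relations hold, and that $\fg$ is bijective by listing its $|H|$ values.

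The only genuinely non-routine point is the choice of $\ff$ in the non-cyclic $2$-group cases. Beyond the trivial action (which yields only $\rho(H)$), the next natural candidates --- a single shear or transvection --- force every associated crossed homomorphism to take values in a proper subgroup of $H$, so none of them can be bijective. One therefore has to take $\ff$ with a sufficiently large image; moreover, since over $\bZ/4\bZ$ the quantity $p(p-1)/2$ is odd, the shear $\bigl(\begin{smallmatrix}1&1\\0&1\end{smallmatrix}\bigr)$ must be replaced by $\bigl(\begin{smallmatrix}1&2\\0&1\end{smallmatrix}\bigr)$, and for $(\bZ/2\bZ)^3$ one needs two nontrivial generators rather than one. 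Once the correct data has been guessed, every remaining verification is a finite computation.
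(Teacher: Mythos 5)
Your proposal is correct, but it takes a genuinely different route from the paper's proof. The paper does not construct the required subgroups from scratch in cases (1)--(3): it simply cites \cite{By96} (Lemmas 1 and 2 and the corrigendum) for the cyclic cases and \cite{Z thesis} for case (3), and only case (4) is treated explicitly, by writing down two affine permutations $\eta_i(\mathbf{x})=A_i\mathbf{x}+\mathbf{b}_i$ of $\bZ/4\bZ\times\bZ/4\bZ$ with $A_2=\bigl(\begin{smallmatrix}3&2\\2&3\end{smallmatrix}\bigr)$ and verifying directly that $\langle\eta_1,\eta_2\rangle$ is regular, isomorphic to $H$ and different from $\rho(H)$. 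You instead give a uniform, self-contained treatment of all four cases inside the framework of Proposition~\ref{corr thm}: a nontrivial $\ff\in\Hom(H,\Aut(H))$ together with some $\fg\in Z^1_\ff(H,H)^*$ produces the desired subgroup, and $\beta_{(\ff,\fg)}(H)\neq\rho(H)$ by \eqref{must rho}. Your explicit data does work; I checked the routine verifications you defer: for $\bZ/p^n\bZ$ (with $u=1+p^{n-1}$) and $\bZ/2^n\bZ$ (with $u=1+2^{n-1}$) the permutation $x\mapsto ux-1$ is indeed a single $p^n$-cycle; for $\bZ/p\bZ\times\bZ/p\bZ$ your data gives $\fg(\sigma^a\tau^b)=(a(a-1)/2+b,\,a)$, for $\bZ/4\bZ\times\bZ/4\bZ$ it gives $(a^2-a+b,\,a)$, for $(\bZ/2\bZ)^3$ it gives $(a,b,ab+c)$, and for $\bZ/2\bZ\times\bZ/4\bZ$ the eight values exhaust $H$; the relation checks (using oddness of $p$ in case (1), exactly as in \eqref{p odd}) all pass. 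What your route buys is independence from \cite{By96} and \cite{Z thesis} and a demonstration that the machinery of Section~\ref{Hol sec} alone suffices for the whole lemma; what the paper's route buys is brevity, at the price of outsourcing cases (1)--(3). One small correction: $\beta_{(\ff,\fg)}(H)$ is isomorphic to $H$ not ``because it has order $|H|$'' (a regular subgroup of $\Hol(H)$ of order $|H|$ need not be isomorphic to $H$), but because bijectivity of $\fg$ forces $\beta_{(\ff,\fg)}$ to be injective, since $\beta_{(\ff,\fg)}(\sigma)(1_H)=\fg(\sigma)^{-1}$; this is exactly the remark made in the introduction about elements of $\Reg(G,\Hol(N))$ being automatically injective.
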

\begin{proof}For cases $(1)$ and $(2)$, see \cite[Lemmas 1 and 2]{By96} as well as its corrigendum. For case $(3)$, see \cite[Theorem 1.2.5]{Z thesis}. For case $(4)$, let us identify $H$ with $\bZ/4\bZ\times \bZ/4\bZ$, and define
\[ A_1 = \begin{pmatrix}1 & 0 \\ 0 & 1\end{pmatrix},\, A_2 = \begin{pmatrix} 3 & 2 \\ 2 & 3\end{pmatrix},\ \mathbf{b}_1 = \begin{pmatrix}1\\1\end{pmatrix},\, \mathbf{b}_2 = \begin{pmatrix}1\\2\end{pmatrix}.\]
Further, define two permutations $\eta_1,\eta_2$ on $H$ by setting
\[ \eta_1(\mathbf{x}) = A_1\mathbf{x} + \mathbf{b}_1\mbox{ and }\eta_2(\mathbf{x}) = A_2\mathbf{x} + \mathbf{b}_2\mbox{ for }\mathbf{x}\in H.\]
Note that $\eta_1,\eta_2\in \Hol(H)$ by (\ref{Hol(N)}). It is easy to verify that $\langle\eta_1,\eta_2\rangle\simeq H$ and $\langle\eta_1,\eta_2\rangle\neq\rho(H)$. A routine calculation also shows that $\langle\eta_1,\eta_2\rangle$ is regular, and so the claim follows.
%Below, the symbols $i$ and $j$ denote arbitrary integers. Define
%\[\ff\in \Hom(H,\Aut(H));\hspace{1em}\begin{cases} \ff(\sigma) = (\sigma^i\tau^j\mapsto \sigma^i\tau^{3j}),\\\ff(\tau) = (\sigma^i\tau^j\mapsto \sigma^{i+j}\tau^{2i+j}).\end{cases}\]
%Note that $\ff(\sigma) = \ff(\tau)^2$. Further, define a bijection 
%\[\fg\in\Map(H,H);\hspace{1em}\begin{cases}\fg(1)=1,\, \fg(\tau) = \sigma\tau,\, \fg(\tau^2) = \sigma,\, \fg(\tau^3) = \tau^3,\\
%\fg(\sigma) = \sigma\tau^2,\,\fg(\sigma\tau) =\tau,\,\fg(\sigma\tau^2)=\tau^2,\,\fg(\sigma\tau^3)=\sigma\tau^3.\end{cases}\]
%It is routine to check that
%\[\begin{cases} \ff(\sigma\tau^i) = \ff(\tau^{i+2})\mbox{ and } \ff(\tau^i)(\tau^2) = \tau^2,\\
%\fg(\sigma\tau^i) = \fg(\tau^{i+2})\tau^2\mbox{ and } \fg(\tau^i\tau^j) = \fg(\tau^i)\cdot\ff(\tau^i)(\fg(\tau^j)).\end{cases}\]
%Using the above, we then deduce that
%\begin{align*}
%\fg(\sigma\tau^i \tau^j)& = \fg(\tau^{i+2})\tau^2\cdot \ff(\tau^{i+2})(\fg(\tau^j)) =  \fg(\sigma\tau^i)\cdot\ff(\sigma\tau^i)(\fg(\tau^j))\\
%\fg(\tau^i\sigma\tau^j) & = \fg(\tau^i)\cdot \ff(\tau^i)(\fg(\tau^{j+2})\tau^2) = \fg(\tau^i)\cdot \ff(\tau^i)(\fg(\sigma\tau^j))\\
%\fg(\sigma\tau^i\sigma\tau^j) & = \fg(\tau^{i+2})\tau^2\cdot \ff(\tau^{i+2})(\fg(\tau^{j+2})\tau^2) = \fg(\sigma\tau^i)\cdot \ff(\sigma\tau^i)(\fg(\sigma\tau^j))
%\end{align*}
\end{proof}

\begin{proof}[Proof of Theorem~\ref{abelian thm}: forward implication] By Lemma~\ref{H lem}, there is a regular subgroup $N_H$ of $\Hol(H)$ isomorphic to $H$ and not equal to $\rho(H)$. The image of $N_H\times \rho(H')$ under the natural injective homomorphism
\[ \Hol(H)\times \Hol(H') \longrightarrow \Hol(H\times H')\]
is then a regular subgroup isomorphic to $H\times H'$ and not equal to $\rho(H\times H')$. The claim now follows from (\ref{Byott formula}).
\end{proof}

\subsection{Quasisimple groups}\label{QS sec} Let $Q$ be a quasisimple group. We shall need the next proposition, which is the crucial ingredient for the proof of $e(Q,Q)= 2$ given in \cite{Childs simple} in the special case when $Q$ is non-abelian simple.

\begin{prop}\label{simple prop}Let $T$ be a non-abelian simple group. Then:
\begin{enumerate}[(a)]
\item Schreier's conjecture is true, namely $\Out(T)$ is solvable.
\item A pair $(f,g)$ with $f,g\in \Aut(T)$ is never fixed point free.
\end{enumerate}
\end{prop}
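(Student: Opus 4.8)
The statement to prove is Proposition~\ref{simple prop}, which has two parts concerning a non-abelian simple group $T$: (a) $\Out(T)$ is solvable (Schreier's conjecture), and (b) no pair $(f,g)$ with $f,g \in \Aut(T)$ is fixed point free.

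\textbf{Part (a).} I would simply cite this. Schreier's conjecture is known to be true as a consequence of the classification of finite simple groups (CFSG): one checks it directly for the alternating groups, the sporadic groups, and the groups of Lie type, where in the Lie type case $\Out(T)$ is built from diagonal, field, and graph automorphisms and is visibly solvable (indeed metabelian-by-small in each case). So the proof of (a) is one sentence invoking CFSG together with the standard references (e.g.\ the ATLAS, or Gorenstein--Lyons--Solomon).

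\textbf{Part (b).} The plan is to argue by contradiction. Suppose $(f,g)$ is fixed point free, so by Definition~\ref{fpf pair def} the map $\sigma \mapsto f(\sigma)g(\sigma)^{-1}$ is a bijection $T \to T$. First I would reduce to the case $g = \mathrm{Id}_T$ by composing: replacing $f$ with $g^{-1}f$ and $g$ with $\mathrm{Id}$ changes $f(\sigma)g(\sigma)^{-1}$ to $g^{-1}(f(\sigma)g(\sigma)^{-1})$, still a bijection (post-composition with an automorphism), and $(f,g)$ fixed point free is equivalent to $(g^{-1}f, \mathrm{Id})$ fixed point free. So it suffices to show: for any $\varphi \in \Aut(T)$, the map $\sigma \mapsto \varphi(\sigma)\sigma^{-1}$ is \emph{not} a bijection; equivalently, $\varphi$ fixes some non-identity element of $T$, i.e.\ $\mathrm{Fix}(\varphi) := \{\sigma \in T : \varphi(\sigma) = \sigma\} \neq \{1_T\}$. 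Now consider the order of $\varphi$ in $\Aut(T)$. If $\varphi$ has even order, it has a power $\psi$ of order $2$; a fixed-point-free automorphism of order $2$ would force $T$ to be abelian (the classical result: an automorphism of order $2$ inverting all elements, which is what fixed-point-free of order $2$ gives, makes the group abelian), contradicting that $T$ is non-abelian simple. If $\varphi$ has odd order, then $\mathrm{Fix}(\varphi)$ is trivial would make $T$ a group admitting a fixed-point-free automorphism of odd prime order (pass to a prime-order power $\psi$ of $\varphi$), hence by Thompson's theorem $T$ would be nilpotent, again contradicting non-abelian simplicity. Either way we get a contradiction, so (b) holds.

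\textbf{Main obstacle.} The genuinely substantial inputs are external theorems rather than new arguments: Thompson's theorem that a finite group with a fixed-point-free automorphism of prime order is nilpotent, and the elementary fact that a group with a fixed-point-free involution is abelian. The only care needed in writing the proof is the passage from $\varphi$ to a prime-order power: if $\psi = \varphi^k$ is such a power and $\psi$ is already fixed point free (which it is, since any power of a fixed-point-free automorphism of a finite group is again fixed point free — this follows because $\varphi$ fixed point free means $1$ is not an eigenvalue in a suitable sense, or more concretely because $\langle \varphi \rangle$ acts on $T \setminus \{1\}$ with all orbits of full length $|\varphi|$, forcing all non-identity powers to be fixed-point-free too), we may apply the prime-order result. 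I should double-check that last claim and state it cleanly, but it is standard. Thus the bulk of the write-up is just organizing the case split on the parity/prime divisors of the order of $\varphi$ and quoting the two classical theorems; no CFSG is needed for part (b), only for part (a).
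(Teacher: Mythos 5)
Your part (a) is handled exactly as in the paper, which disposes of both parts in one line by citing \cite[Theorems 1.46 and 1.48]{G book} as consequences of the classification. The problem is your part (b). The reduction to the case $g=\mathrm{Id}_T$ is fine: $(f,g)$ is fixed point free iff $\varphi=g^{-1}f$ is a fixed-point-free automorphism of $T$. But the pivotal claim you flagged for double-checking --- that any power of a fixed-point-free automorphism is again fixed point free because ``all orbits of $\langle\varphi\rangle$ on $T\setminus\{1_T\}$ have full length $|\varphi|$'' --- is false. Fixed-point-freeness of $\varphi$ only excludes orbits of size $1$; an orbit may have length a proper divisor $d>1$ of $|\varphi|$, and then $\varphi^{d}$ has non-trivial fixed points. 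For instance, the automorphism of $\bZ/15\bZ$ acting by $-1$ on the $3$-part and by $2$ on the $5$-part is fixed point free of order $4$, yet its square fixes the $3$-part pointwise. So your passage from a composite-order $\varphi$ to a fixed-point-free involution (Burnside) or to a fixed-point-free automorphism of prime order (Thompson) does not go through.

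This is not a cosmetic slip but the essential difficulty: if one could always extract a fixed-point-free prime-order power, Thompson's theorem would give a classification-free proof that no non-abelian simple group admits a fixed-point-free automorphism, and no such proof is known. That statement (for automorphisms of arbitrary order) is precisely the hard ingredient behind Rowley's theorem that a finite group with a fixed-point-free automorphism is solvable, whose proof uses the classification. So your assessment that part (b) avoids CFSG is incorrect; the honest route, and the one the paper takes, is to cite the CFSG-based reference for (b) just as for (a).
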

\begin{proof}They are consequences of the classification of finite simple groups; see \cite[Theorems 1.46 and 1.48]{G book}, for example.
\end{proof}

We shall also need the following basic properties concerning $Q$.

\begin{prop}\label{QS prop}The following statements hold.
\begin{enumerate}[(a)]
\item A proper normal subgroup of $Q$ is contained in $Z(Q)$.
\item The kernel of a non-trivial homomorphism $Q\longrightarrow Q/Z(Q)$ is $Z(Q)$.
\item The natural homomorphism $\Aut(Q)\longrightarrow \Aut(Q/Z(Q))$ is injective.
%\item The natural homomorphism $\Out(Q)\longrightarrow \Out(Q/Z(Q))$ is injective.
\item An endomorphism on $Q$ is either trivial or an automorphism.
\end{enumerate}
\end{prop}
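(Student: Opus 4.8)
The plan is to prove the four assertions about the quasisimple group $Q$ in order, using the defining properties $Q=[Q,Q]$ and $Q/Z(Q)$ simple non-abelian, together with the observation that $Z(Q)$ being the center must coincide with the kernel of the natural map $Q\to Q/Z(Q)=:T$.

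For part (a), let $M\trianglelefteq Q$ be a proper normal subgroup. Its image $MZ(Q)/Z(Q)$ is a normal subgroup of the simple group $T$, hence trivial or all of $T$. If it were all of $T$ then $MZ(Q)=Q$; but then $Q/M\simeq Z(Q)/(M\cap Z(Q))$ would be abelian, contradicting $Q=[Q,Q]$ unless $Q/M$ is trivial, i.e. $M=Q$, which we excluded. So $MZ(Q)/Z(Q)$ is trivial, i.e. $M\subseteq Z(Q)$. For part (b), let $\theta:Q\to T$ be a non-trivial homomorphism. Then $\ker\theta$ is a proper normal subgroup of $Q$, so by part (a) it lies in $Z(Q)$. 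Conversely $Z(Q)=[Q,Q]\cap\cdots$; more directly, $\theta$ induces a non-trivial map from the simple group $Q/\ker\theta$ onto a subgroup of $T$, and since $Q/\ker\theta$ surjects onto (its image in) $T$ which is normal — here one should instead argue: $\ker\theta\subseteq Z(Q)$ gives $|\mathrm{im}\,\theta|\geq |Q/Z(Q)|=|T|$, and any subgroup of $T$ of order $\geq|T|$ is $T$; so $\theta$ is onto, and then $Q/\ker\theta\simeq T$ is simple non-abelian, forcing $\ker\theta$ to be a maximal normal subgroup, hence (being inside $Z(Q)$, and with $Z(Q)/\ker\theta$ then a normal subgroup of the simple $Q/\ker\theta$ that is central, so trivial) $\ker\theta=Z(Q)$.

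For part (c), suppose $\varphi\in\Aut(Q)$ induces the identity on $T=Q/Z(Q)$. Then for every $x\in Q$ we have $\varphi(x)x^{-1}\in Z(Q)$, so $x\mapsto \varphi(x)x^{-1}$ is a map $Q\to Z(Q)$; using that $Z(Q)$ is central one checks it is a homomorphism $Q\to Z(Q)$ into an abelian group, hence trivial on $[Q,Q]=Q$, so $\varphi=\mathrm{id}$. Thus $\Aut(Q)\to\Aut(T)$ is injective. For part (d), let $\psi:Q\to Q$ be an endomorphism. Its kernel is a normal subgroup of $Q$, so by (a) either $\ker\psi=Q$ (giving $\psi$ trivial) or $\ker\psi\subseteq Z(Q)$. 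In the latter case $|\mathrm{im}\,\psi|\geq|Q/Z(Q)|=|T|$; composing $\psi$ with $Q\to T$ and noting the composite $Q\to T$ has image a subgroup of $T$ of order $\geq |T|/|Z(Q)\cap\ker\psi\text{-stuff}|$ — cleaner: $\psi(Q)$ is a subgroup of $Q$ with $|\psi(Q)|\geq|T|$, and $\psi(Q)Z(Q)/Z(Q)$ is a non-trivial (since $\psi$ non-trivial and $\ker\psi\subseteq Z(Q)$ forces $\psi(Q)\not\subseteq Z(Q)$) normal-in-image subgroup; one shows $\psi(Q)Z(Q)=Q$, then as in (a) $Q/\psi(Q)$ is abelian so $\psi(Q)=Q$, i.e. $\psi$ is onto, hence bijective by finiteness.

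The main obstacle is part (d): showing surjectivity of a non-trivial endomorphism. Kernel considerations alone only place $\ker\psi$ inside $Z(Q)$; to upgrade $\psi$ to an automorphism one must rule out proper images, and the cleanest route is the $Z(Q)$-saturation argument ($\psi(Q)Z(Q)=Q$ via simplicity of $T$, then perfectness of $Q$ kills the abelian quotient $Q/\psi(Q)$). I would present (a) first and then derive (b), (c), (d) as consequences, since all three reduce to the combination ``normal subgroup $\Rightarrow$ central, plus $Q$ perfect.''
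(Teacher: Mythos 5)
Your parts (a), (b), (c) are fine: (c) is exactly the paper's argument, and (a), (b) are the standard reductions the paper leaves to the reader. The problem is in (d), at precisely the step you yourself flag as the main obstacle. You assert $\psi(Q)Z(Q)=Q$ and propose to obtain it ``via simplicity of $T$'', treating $\psi(Q)Z(Q)/Z(Q)$ as if it were a normal subgroup of $T=Q/Z(Q)$. But $\psi(Q)$ is merely the image of an endomorphism and is not known to be normal in $Q$ at this stage, so $\psi(Q)Z(Q)/Z(Q)$ is just some subgroup of $T$ and simplicity of $T$ says nothing about it (the phrase ``normal-in-image subgroup'' does not help: the ambient group that matters is $T$). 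Nor does the bound $|\psi(Q)|\ge|T|$ alone give $\psi(Q)Z(Q)=Q$, since $|\psi(Q)Z(Q)|=|\psi(Q)|\,|Z(Q)|/|\psi(Q)\cap Z(Q)|$ and you have no control on $|\psi(Q)\cap Z(Q)|$. (Also, your parenthetical reason that $\psi(Q)\not\subseteq Z(Q)$ ``since $\ker\psi\subseteq Z(Q)$'' is not the right reason; what rules it out is perfectness: if $\psi(Q)\subseteq Z(Q)$ were abelian, $\psi$ would kill $[Q,Q]=Q$.) So, as written, the pivotal claim in (d) is unproved.

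The fix is the composite you started and then abandoned: let $\pi:Q\longrightarrow Q/Z(Q)$ be the projection. If $\psi$ is non-trivial, then $\pi\circ\psi$ is non-trivial (else $\psi(Q)\subseteq Z(Q)$, contradicting perfectness as above), so by your part (b) its kernel is $Z(Q)$; hence its image has order $|Q|/|Z(Q)|=|T|$ and $\pi\circ\psi$ is surjective, which is exactly $\psi(Q)Z(Q)=Q$. From there your ending is correct: since $Z(Q)$ centralizes everything, $\psi(Q)\trianglelefteq Q$ and $Q/\psi(Q)\simeq Z(Q)/(\psi(Q)\cap Z(Q))$ is abelian, so $\psi(Q)=Q$ by $Q=[Q,Q]$, and $\psi$ is bijective by finiteness. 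For comparison, the paper reaches $\psi(Q)Z(Q)=Q$ by a different count: with $H=\psi(Q)$ it uses $H\cap Z(Q)\subseteq Z(H)$ and $|Z(H)|\le[Z(Q):\ker\psi]$ (because $Z(Q/\ker\psi)\subseteq Z(Q)/\ker\psi$, as $Q/Z(Q)$ has trivial center), whence $|HZ(Q)|\ge |H|\,|\ker\psi|=|Q|$. Either route is legitimate, and yours (via (b)) is arguably shorter, but the step must actually be carried out rather than asserted.
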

\begin{proof}This is known, and it is easy to prove (a), which in turn gives (b). For the convenience of reader, we shall give a proof for (c) and (d). 

\vspace{1.5mm}

To prove (c), let $\varphi\in\Aut(Q)$ be such that $\varphi(\sigma)\sigma^{-1}\in Z(Q)$ for all $\sigma\in Q$. We easily verify that the map
\[\psi: Q\longrightarrow Z(Q);\hspace{1em}\psi(\sigma) = \varphi(\sigma)\sigma^{-1}\]
is a homomorphism. But then $\psi$ must be trivial because $Z(Q)$ is abelian and $Q=[Q,Q]$. This means that $\varphi = \mbox{Id}_Q$, as desired.

\vspace{1.5mm}

To prove (d), let $\varphi\in\Hom(Q,Q)$ be non-trivial, so then $\ker(\varphi)\subset Z(Q)$ by (a). Put $H =  \varphi(Q)$ for brevity. Note that
\[ \frac{Q/\ker(\varphi)}{Z(Q)/\ker(\varphi)} \simeq \frac{Q}{Z(Q)}\mbox{ has trivial center}\]
as well as that $Q/\ker(\varphi)\simeq H$. Hence, we deduce that
\[Z\left(\frac{Q}{\ker(\varphi)}\right) \subset \frac{Z(Q)}{\ker(\varphi)}\mbox{ and so }|Z(H)| \leq [Z(Q):\ker(\varphi)].\]
It follows that
\[ |HZ(Q)| = \frac{|H||Z(Q)|}{|H\cap Z(Q)|} \geq \frac{|H||Z(Q)|}{|Z(H)|} \geq \frac{|H||Z(Q)|}{[Z(Q):\ker(\varphi)]} = |Q|,\]
and so $HZ(Q) = Q$. Since $Q = [Q,Q]$, we deduce that $H = Q$. This means that $\varphi\in\Aut(Q)$, as desired.
\end{proof}

Now, to prove Theorem~\ref{QS thm}, consider
\[\ff\in\Hom(Q,\Aut(Q))\mbox{ and }\fg\in Z_\ff^1(Q,Q).\]
By (\ref{Byott formula}) and Proposition~\ref{corr thm}, it suffices to show that
\begin{equation}\label{QS thm'} \beta_{(\ff,\fg)}(Q) \in \{\rho(Q),\lambda(Q)\}\mbox{ whenever $\fg$ is bijective}.\end{equation}
The next lemma is analogous to an argument in \cite[p. 84]{Childs simple}.

\begin{lem}\label{QS inn}We have $\ff(Q) \subset \Inn(Q)$.
\end{lem}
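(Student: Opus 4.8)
The plan is to exploit the structure of $Q$ as a quasisimple group together with Proposition~\ref{QS prop}, in particular parts (a) and (d). Consider the composite homomorphism $\ff' \in \Hom(Q,\Aut(Q/Z(Q)))$ obtained by following $\ff$ with the natural map $\Aut(Q)\to\Aut(Q/Z(Q))$, which is injective by Proposition~\ref{QS prop}(c). Since $\Aut(Q/Z(Q))$ sits in a short exact sequence with $\Inn(Q/Z(Q))$ and the solvable group $\Out(Q/Z(Q))$ (Schreier's conjecture, Proposition~\ref{simple prop}(a)), composing $\ff'$ further with the projection to $\Out(Q/Z(Q))$ gives a homomorphism from $Q=[Q,Q]$ into a solvable group, hence the trivial homomorphism. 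Therefore $\ff'(Q)\subset\Inn(Q/Z(Q))\simeq Q/Z(Q)$, and so $\ff'$ may be regarded as a homomorphism $Q\to Q/Z(Q)$.

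Now I would apply Proposition~\ref{QS prop}(b): a homomorphism $Q\to Q/Z(Q)$ is either trivial or has kernel exactly $Z(Q)$. If $\ff'$ is trivial, then $\ff(Q)$ lies in the kernel of $\Aut(Q)\to\Aut(Q/Z(Q))$, which is trivial by Proposition~\ref{QS prop}(c), so $\ff$ is the trivial homomorphism and in particular $\ff(Q)\subset\Inn(Q)$. If instead $\ker(\ff')=Z(Q)$, then $\ff$ factors through an injective homomorphism $Q/Z(Q)\hookrightarrow\Aut(Q)$ whose image, composed with the projection to $\Out(Q)$, is again a homomorphism from the simple group $Q/Z(Q)$ into the solvable group $\Out(Q)$, hence trivial. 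So $\ff(Q)\subset\Inn(Q)$ in this case as well.

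Actually, the cleanest route is to skip the case division and argue directly: the composite $Q\xrightarrow{\ff}\Aut(Q)\to\Out(Q)$ is a homomorphism from the perfect group $Q$ into the solvable group $\Out(Q)$ (Proposition~\ref{simple prop}(a) applied to $T=Q/Z(Q)$, noting $\Out(Q)$ embeds into something controlled by $\Out(Q/Z(Q))$ — this is where I would need to be slightly careful), hence trivial, which immediately gives $\ff(Q)\subset\Inn(Q)$. I would phrase this so that Schreier's conjecture is invoked for the simple group $Q/Z(Q)$ and then transferred to $\Out(Q)$ via Proposition~\ref{QS prop}(c), which realizes $\Aut(Q)$ as a subgroup of $\Aut(Q/Z(Q))$ compatible with inner automorphisms.

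The main obstacle is the bookkeeping around $\Out(Q)$ versus $\Out(Q/Z(Q))$: one must check that the solvability of $\Out(Q/Z(Q))$ genuinely forces any homomorphism from $Q$ into the automorphism tower to land in $\Inn(Q)$, i.e. that the image of $Q$ in $\Aut(Q)/\Inn(Q)$ is a quotient of its image in $\Aut(Q/Z(Q))/\Inn(Q/Z(Q))$ under the injection of Proposition~\ref{QS prop}(c). Once that compatibility is recorded, the rest is a one-line application of ``a perfect group has no nontrivial homomorphism to a solvable group.''
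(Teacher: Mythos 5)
Your ``cleanest route'' is exactly the paper's proof: the composite $Q\xrightarrow{\ff}\Aut(Q)\to\Out(Q)$ is trivial because $Q=[Q,Q]$ and $\Out(Q)$ is solvable, the latter since Proposition~\ref{QS prop}(c) yields an embedding $\Out(Q)\hookrightarrow\Out(Q/Z(Q))$ and Proposition~\ref{simple prop}(a) makes the target solvable; the compatibility with inner automorphisms that you flag is precisely the point the paper also settles by citing part (c) (an automorphism of $Q$ inducing $\conj(xZ(Q))$ differs from $\conj(x)$ by an element of the trivial kernel of $\Aut(Q)\to\Aut(Q/Z(Q))$, hence is inner). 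Your first variant with the case division via Proposition~\ref{QS prop}(b) is a correct but unnecessary detour using the same ingredients.
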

\begin{proof}The group $\Out(Q/Z(Q))$ is solvable by Proposition~\ref{simple prop} (a). Since we have an injective homomorphism 
\[\Out(Q)\longrightarrow\Out(Q/Z(Q))\]
by Proposition~\ref{QS prop} (c), the group $\Out(Q)$ is also solvable. Since $Q = [Q:Q]$, we then see that the homomorphism
\[\begin{tikzcd}[column sep = 1.5cm] Q\arrow{r}{\ff}& \Aut(Q) \arrow{r}{\mbox{\tiny quotient}} &\Out(Q)\end{tikzcd}\]
must be trivial. This means that $\ff(Q)\subset\Inn(Q)$, as claimed.
\end{proof}

In view of Lemma~\ref{QS inn}, we may define
\[ f\in \Hom(Q,Q/Z(Q)),\, \widetilde{f}\in\Map(Q,Q),\, g\in\Hom(Q,Q/Z(Q))\]
as in Subsection~\ref{gen inn}. More precisely, we have
\[ \ff(\sigma) = \conj(f(\sigma)),\,\ f(\sigma) = \widetilde{f}(\sigma)Z(Q),\,\ g(\sigma) = \fg(\sigma)\widetilde{f}(\sigma)Z(Q)\]
for all $\sigma\in Q$. We make the following useful observation.

\begin{lem}\label{QS gZ}If $f$ and $g$ are non-trivial, then $\fg(Z(Q))\subset Z(Q)$.
\end{lem}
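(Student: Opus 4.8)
The plan is to exploit the cocycle relation for $\fg$ on central elements. Since $f$ is non-trivial, Proposition~\ref{QS prop}~(b) gives $\ker f = Z(Q)$, so $\ff(\sigma) = \conj(f(\sigma)) = \mathrm{Id}_Q$ for every $\sigma\in Z(Q)$. Put $W = \fg(Z(Q))$. For $\sigma\in Z(Q)$ and arbitrary $\tau\in Q$, I would compare $\fg(\sigma\tau)$ with $\fg(\tau\sigma)$ using $\sigma\tau=\tau\sigma$, the crossed-homomorphism identity, and $\ff(\sigma)=\mathrm{Id}_Q$; this yields
\[ \fg(\sigma)\fg(\tau) = \fg(\tau)\,\ff(\tau)(\fg(\sigma)),\qquad\text{equivalently}\qquad \ff(\tau)(w) = \fg(\tau)^{-1}\,w\,\fg(\tau)\ \text{ for all }w\in W,\ \tau\in Q. \]
Thus the automorphism $\ff(\tau)$ agrees on $W$ with conjugation by $\fg(\tau)^{-1}$ inside $Q$.

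Next I would feed in the identity $\ff(\tau)=\conj(\widetilde f(\tau))$. The display above becomes $\widetilde f(\tau)\,w\,\widetilde f(\tau)^{-1}=\fg(\tau)^{-1}w\fg(\tau)$ for all $w\in W$, which rearranges to $\fg(\tau)\widetilde f(\tau)\in \Cent_Q(W)$. Since $g(\tau)=\fg(\tau)\widetilde f(\tau)\,Z(Q)$ and $Z(Q)\subseteq\Cent_Q(W)$, this shows that $g(Q)$ is contained in the image $\Cent_Q(W)Z(Q)/Z(Q)$ of $\Cent_Q(W)$ in $Q/Z(Q)$.

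Finally, since $g$ is non-trivial, Proposition~\ref{QS prop}~(b) gives $\ker g=Z(Q)$, so $g(Q)$ has order $|Q/Z(Q)|$; as $g(Q)$ is a subgroup of the simple group $Q/Z(Q)$, it must equal $Q/Z(Q)$. Hence $\Cent_Q(W)Z(Q) = Q$, and since $Z(Q)\subseteq\Cent_Q(W)$ this forces $\Cent_Q(W)=Q$, i.e.\ $\fg(Z(Q))=W\subseteq Z(Q)$, as claimed. The step carrying the real content — and where I would be most careful — is the passage from "$\ff(\tau)$ acts on $W$ as an inner automorphism of $Q$" to "$g(\tau)$ centralizes $W$ modulo $Z(Q)$", since it is exactly this that lets the simplicity of $Q/Z(Q)$ be brought to bear; the remaining manipulations of the cocycle condition are routine.
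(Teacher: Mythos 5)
Your proof is correct and takes essentially the same route as the paper: both compare $\fg(\sigma\tau)$ with $\fg(\tau\sigma)$ for $\sigma\in Z(Q)$, use $\ker f=Z(Q)$ to kill the action of $\ff(\sigma)$, conclude that each $\fg(\tau)\widetilde f(\tau)$ centralizes $\fg(Z(Q))$, and then use $\ker g=Z(Q)$ (hence surjectivity of $g$ onto $Q/Z(Q)$) to see that these elements exhaust $Q$ up to central factors. Your reformulation via $\Cent_Q(W)$ and the observation $\Cent_Q(W)Z(Q)=Q\Rightarrow\Cent_Q(W)=Q$ is just a repackaging of the paper's union argument; the appeal to simplicity of $Q/Z(Q)$ in the last step is unnecessary (the order count already gives $g(Q)=Q/Z(Q)$) but harmless.
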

\begin{proof}Let $\sigma\in Z(Q)$. Then, for any $\tau\in Q$, we have
\[\fg(\sigma)\widetilde{f}(\sigma)\fg(\tau)\widetilde{f}(\sigma)^{-1} = \fg(\sigma\tau) = \fg(\tau\sigma) = \fg(\tau)\widetilde{f}(\tau)\fg(\sigma)\widetilde{f}(\tau)^{-1}.\]
Suppose that $f$ is non-trivial, so then $\ker(f) = Z(Q)$ by Proposition~\ref{QS prop} (b). This implies that $\widetilde{f}(\sigma) \in Z(Q)$, and from the above, we deduce that $\fg(\sigma)$ is centralized by $\fg(\tau)\widetilde{f}(\tau)$. This means that $\fg(\sigma)$ commutes with
\[ \bigcup_{\tau\in Q}\fg(\tau)\widetilde{f}(\tau)Z(Q) = \bigcup_{\tau\in Q}g(\tau).\]
Now, suppose further that $g$ is non-trivial. Then, by Proposition~\ref{QS prop} (b), we\par\noindent have $\ker(g) = Z(Q)$, and in particular $g$ is surjective. Hence, the union above is equal to the entire group $Q$. This means that $\fg(\sigma)\in Z(Q)$, as desired.
\end{proof}

%\begin{lem}There is no fixed-point free automorphism on $Q$.\end{lem}
%\begin{proof}Suppose that $\varphi:Q\longrightarrow Q$ is a fixed point free automorphism on $Q$. Define
%\[ H = \{\sigma\in Q: \varphi(\sigma)\equiv\sigma\hspace{-3mm}\pmod{Z(Q)}\},\]
%which is plainly a subgroup of $Q$. Since $Z(Q)$ is a characteristic subgroup of $Q$, we see that $Z(Q)\subset H$. Now, note that $\varphi$ induces a homomorphism
%\[ H \longrightarrow Z(Q);\hspace{1em} \sigma\mapsto\varphi(\sigma)\sigma^{-1}.\]
%The fact that $\varphi$ is fixed point free implies that this is injective. So then in fact $H = Z(Q)$, by looking at their cardinalities.

%Now, consider the automorphism
%\[ \overline{\varphi} : Q/Z(Q) \longrightarrow Q/Z(Q)\]
%induced by $\varphi$. That $H = Z(Q)$ implies that
%\[ \varphi(\sigma) \equiv \sigma\hspace{-3mm}\pmod{Z(Q)} \mbox{ if and only if }\sigma \equiv 1 \hspace{-3mm}\pmod{Z(Q)}.\]
%This means that $\overline{\varphi}$ is fixed point free. But $Q/Z(Q)$ is non-abelian simple, and we know that this is impossible.
%\end{proof}

\begin{proof}[Proof of Theorem~\ref{QS thm}] Suppose that $\fg$ is bijective. In view of (\ref{must rho}) and (\ref{must lambda}), to prove (\ref{QS thm'}), it suffices to show that either $f$ or $g$ is trivial. Suppose for contradiction that they are both non-trivial. Then, they induce automorphisms
\[\overline{f} : Q/Z(Q)\longrightarrow Q/Z(Q)\mbox{ and }\overline{g}: Q/Z(Q)\longrightarrow Q/Z(Q)\]
by Proposition~\ref{QS prop} (b). For any $\sigma\in Q$, we have
\[ f(\sigma) = g(\sigma) \implies \fg(\sigma)\in Z(Q) \implies \sigma\in Z(Q),\]
by Lemma~\ref{QS gZ} and the bijectivity of $\fg$. This shows that $(\overline{f},\overline{g})$ is fixed point free, which is impossible by Proposition~\ref{simple prop} (b). 
\end{proof}

\section{Applications: non-isomorphic type}\label{app sec2}

%In this section, we shall prove Theorems~\ref{cyclic thm},~\ref{2An thm}, and~\ref{sym thm}.

\subsection{Formulation of the main idea}\label{idea sec}In what follows, let $G$ and $N$ denote two groups having the same order. Notice that by definition, any characteristic subgroup $M$ of $N$ is normal, and we also have a natural homomorphism $\Aut(N)\longrightarrow \Aut(N/M)$.

\begin{lem}\label{mod char}Let $M$ be a characteristic subgroup of $N$. Given
\[ \ff\in\Hom(G,\Aut(N)) \mbox{ and }\fg\in Z_\ff^1(G,N),\]
they induce, respectively, a homomorphism and a map
\begin{equation}\label{fg bar} \overline{\ff}:G\longrightarrow\Aut(N)\longrightarrow\Aut(N/M)\mbox{ and }\overline{\fg}:G\longrightarrow N\longrightarrow N/M.\end{equation}
By abuse of notation, define
\[ \ker(\overline{\fg}) = \{\sigma\in G : \overline{\fg}(\sigma) = 1_{N/M}\}.\]
Then, the following are true.
\begin{enumerate}[(a)]
\item The set $\ker(\overline{\fg})$ is a subgroup of $G$.
\item The map $\overline{\fg}$ induces an injection $G/\ker(\overline{\fg})\longrightarrow N/M$.
\item The map $\overline{\fg}$ restricts to a homomorphism $\ker(\overline{\ff})\longrightarrow N/M$.
\item In the case that $N/M$ is abelian, for any $\sigma\in \ker(\overline{\ff})\cap Z(G)$, the element $\overline{\fg}(\sigma)$ is fixed by the automorphisms in $\overline{\ff}(G)$.
\end{enumerate}
\end{lem}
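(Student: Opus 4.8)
The proof is largely a matter of unwinding definitions, so the plan is to verify each of the four claims directly from the crossed-homomorphism identity, passed down to the quotient $N/M$.

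First I would record the basic compatibility: since $M$ is characteristic, every $\varphi\in\Aut(N)$ descends to $\overline{\varphi}\in\Aut(N/M)$, and one checks immediately that $\overline{\fg}\in Z^1_{\overline{\ff}}(G,N/M)$, i.e. $\overline{\fg}(\sigma_1\sigma_2)=\overline{\fg}(\sigma_1)\cdot\overline{\ff}(\sigma_1)(\overline{\fg}(\sigma_2))$ in $N/M$, simply by reducing the identity defining $\fg$ modulo $M$. Everything else follows from this. For (a): if $\sigma_1,\sigma_2\in\ker(\overline{\fg})$ then $\overline{\fg}(\sigma_1\sigma_2)=1_{N/M}\cdot\overline{\ff}(\sigma_1)(1_{N/M})=1_{N/M}$, and from $\overline{\fg}(\sigma^{-1})=\overline{\ff}(\sigma^{-1})(\overline{\fg}(\sigma))^{-1}$ (a consequence of the cocycle identity with $\sigma_1=\sigma^{-1},\sigma_2=\sigma$) one sees $\ker(\overline{\fg})$ is closed under inverses; it is nonempty since $\overline{\fg}(1_G)=1_{N/M}$. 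For (b): the cocycle identity shows $\overline{\fg}(\sigma_1)=\overline{\fg}(\sigma_2)$ iff $\overline{\fg}(\sigma_1^{-1}\sigma_2)=1_{N/M}$ (again using the formula for $\overline{\fg}$ on a product and on an inverse), which is exactly the statement that $\overline{\fg}$ factors through an injection $G/\ker(\overline{\fg})\hookrightarrow N/M$; one should note this uses $\overline{\ff}(\sigma_1^{-1})$ being an automorphism to cancel, and that the fibers of $\overline{\fg}$ are precisely the cosets of $\ker(\overline{\fg})$ — a cleaner phrasing than trying to make $G/\ker(\overline{\fg})$ a group first.

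For (c): restrict attention to $\sigma_1,\sigma_2\in\ker(\overline{\ff})$; then $\overline{\ff}(\sigma_1)=\mathrm{Id}_{N/M}$, so the cocycle identity collapses to $\overline{\fg}(\sigma_1\sigma_2)=\overline{\fg}(\sigma_1)\overline{\fg}(\sigma_2)$, i.e. $\overline{\fg}|_{\ker(\overline{\ff})}$ is a homomorphism into $N/M$. For (d): assume $N/M$ abelian and take $\sigma\in\ker(\overline{\ff})\cap Z(G)$ and any $\tau\in G$. Apply the cocycle identity to $\sigma\tau$ and to $\tau\sigma$: since $\sigma\tau=\tau\sigma$ the two expressions are equal, giving
\[ \overline{\fg}(\sigma)\cdot\overline{\ff}(\sigma)(\overline{\fg}(\tau)) = \overline{\fg}(\tau)\cdot\overline{\ff}(\tau)(\overline{\fg}(\sigma)). \]
Now $\overline{\ff}(\sigma)=\mathrm{Id}$ kills the left-hand twist, and since $N/M$ is abelian the two "$\overline{\fg}$" terms on each side cancel, leaving $\overline{\fg}(\sigma)=\overline{\ff}(\tau)(\overline{\fg}(\sigma))$ for all $\tau\in G$, which is the assertion that $\overline{\fg}(\sigma)$ is fixed by $\overline{\ff}(G)$.

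None of the steps is a serious obstacle; the only thing requiring any care is making sure the inverse formula $\overline{\fg}(\sigma^{-1})=\overline{\ff}(\sigma)^{-1}(\overline{\fg}(\sigma)^{-1})$ (equivalently $\overline{\fg}(\sigma^{-1})=\overline{\ff}(\sigma^{-1})(\overline{\fg}(\sigma))^{-1}$) is correctly derived before using it in (a) and (b), and that in (d) one genuinely needs commutativity of $N/M$ — over $N$ itself the same computation only shows $\overline{\fg}(\sigma)$ is centralized by elements of the form $\overline{\fg}(\tau)\,\overline{\ff}(\tau)(\,\cdot\,)$, not fixed by $\overline{\ff}(\tau)$. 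I would present (c) and (d) as immediate once the reduced cocycle identity is in hand, and spend the bulk of the written proof on the clean derivation of that identity and of the inverse formula.
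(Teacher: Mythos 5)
Your proposal is correct and follows essentially the same route as the paper: reduce the crossed-homomorphism identity modulo the characteristic subgroup $M$, deduce (a)--(c) directly (with the same equivalence $\overline{\fg}(\sigma_1)=\overline{\fg}(\sigma_2)\iff\overline{\fg}(\sigma_1^{-1}\sigma_2)=1_{N/M}$ for the injectivity in (b)), and obtain (d) by comparing the identity on $\sigma\tau$ and $\tau\sigma$ and using that $\overline{\ff}(\sigma)=\mathrm{Id}$ and $N/M$ is abelian. Your extra care about the inverse formula and about treating $G/\ker(\overline{\fg})$ merely as a coset space is sound but does not change the argument, which matches the paper's.
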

\begin{proof}Both (a) and (c) are clear. For (b), simply observe that
\begin{eqnarray*}
\overline{\fg}(\sigma_1) = \overline{\fg}(\sigma_2)
& \iff& \overline{\fg}(\sigma_1^{-1}\sigma_2) = \overline{\fg}(\sigma_1^{-1})\cdot\overline{\ff}(\sigma_1^{-1})(\overline{\fg}(\sigma_1))\\
&\iff& \overline{\fg}(\sigma_1^{-1}\sigma_2) = \overline{\fg}(\sigma_1^{-1}\sigma_1)\\
&\iff&\overline{\fg}(\sigma_1^{-1}\sigma_2) = 1_{N/M}
\end{eqnarray*}
for any $\sigma_1,\sigma_2\in G$. Finally, statement (d) follows from the fact that
\[ \overline{\fg}(\sigma)\overline{\fg}(\tau) = \overline{\fg}(\sigma)\cdot\overline{\ff}(\sigma)(\overline{\fg}(\tau)) = \overline{\fg}(\sigma\tau) = \overline{\fg}(\tau\sigma)  = \overline{\fg}(\tau)\cdot\overline{\ff}(\tau)(\overline{\fg}(\sigma))\]
for any $\tau\in G$ and $\sigma\in\ker(\overline{\ff})\cap Z(G)$.
\end{proof}

We keep the notation as in Lemma~\ref{mod char}. To show that $e(G,N)=0$, by (\ref{Byott formula}) and Proposition~\ref{corr thm}, it is the same as proving that $\fg$ can never be bijective. The idea is that, while we might not understand $\Aut(N)$ or $N$ very well, by passing to $\Aut(N/M)$ and $N/M$ for a suitable characteristic subgroup $M$ of $N$, we might be able to use Lemma~\ref{mod char} to prove the weaker statement that $\overline{\fg}$ can never be surjective.

\subsection{Cyclic groups of odd prime power order}\label{pn sec} Let $C_{p^n}$ be a cyclic group of odd prime power order $p^n$ and let $N$ be a group of order $p^n$ with $N\not\simeq C_{p^n}$. To prove Theorem~\ref{cyclic thm}, consider
\[\ff\in \Hom(C_{p^n},\Aut(N))\mbox{ and }\fg\in Z_\ff^1(C_{p^n},N).\]
By (\ref{Byott formula}) and Proposition~\ref{corr thm}, it is enough to show that $\fg$ cannot be bijective. Take $M$ to be the Frattini  subgroup $\Phi(N)$ of $N$. Then, we know that
\[ N/\Phi(N) \simeq (\bZ/p\bZ)^m\mbox{ and so }\Aut(N/\Phi(N)) \simeq \GL_m(\bZ/p\bZ),\]
where $m\in\bN$ is such that $m\geq2$ because $N$ is non-cyclic. Let $\overline{\ff}$ and $\overline{\fg}$ be as in (\ref{fg bar}). Then, in turn, it suffices to show that $\overline{\fg}$ cannot be surjective.

\vspace{1.5mm}

Fix a generator $\sigma\in C_{p^n}$, and write $|\overline{\ff}(\sigma)| = p^r$, where $0\leq r\leq n$. The next two lemmas yield, respectively, an upper bound and a lower bound for $p^r$ in terms of $m$ and the index of $\ker(\overline{\fg})$ in $C_{p^n}$.

\begin{lem}\label{cyclic matrix lem}Let $B\in \GL_m(\bZ/p\bZ)$ be a matrix of order $p^r$.
\begin{enumerate}[(a)]
\item If $m\geq3$, then $r\leq m-2$.
\item If $m=2$, then $r\leq1$, and $B$ is conjugate to $\left(\begin{smallmatrix}1&0\\0&1\end{smallmatrix}\right)$ or $\left(\begin{smallmatrix}1&1\\0&1\end{smallmatrix}\right)$ in $\GL_2(\bZ/p\bZ)$.
\end{enumerate}
\end{lem}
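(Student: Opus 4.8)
The plan is to use the elementary fact that a matrix of $p$-power order in $\GL_m(\bZ/p\bZ)$ is unipotent. First I would observe that since $B^{p^r}=I$, the minimal polynomial of $B$ divides $x^{p^r}-1=(x-1)^{p^r}$ in $(\bZ/p\bZ)[x]$; hence $B=I+X$ where $X$ is nilpotent, say with $X^{\ell}=0$ and $X^{\ell-1}\neq 0$ for some $1\leq\ell\leq m$ (the nilpotency index of $X$).

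The key point is the identity $(I+X)^{p^k}=I+X^{p^k}$, valid for all $k\geq 0$: it follows from the ``freshman's dream'' $(a+b)^p=a^p+b^p$ applied in the commutative ring $(\bZ/p\bZ)[X]$. Consequently the order of $B$ equals $p^t$, where $t$ is the least integer with $p^t\geq\ell$, that is, $t=\lceil\log_p\ell\rceil$. So the whole problem reduces to estimating $\lceil\log_p\ell\rceil$ using $\ell\leq m$ and, crucially, $p\geq 3$ (recall $p$ is odd).

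For part (a), assuming $m\geq 3$, I would invoke the elementary inequality $3^{m-2}\geq m$ (an easy induction on $m$), which gives $\log_3 m\leq m-2$ and hence $r=\lceil\log_p\ell\rceil\leq\lceil\log_3 m\rceil\leq m-2$, the last step because $m-2$ is an integer. This is precisely where the oddness of $p$ enters: for $p=2$ and $m=3$ a single $3\times 3$ Jordan block already has order $4=2^2>2^{m-2}$, so the bound fails. For part (b), $m=2$ forces $\ell\in\{1,2\}$, so $r\leq\lceil\log_p 2\rceil=1$; moreover either $X=0$, giving $B=\left(\begin{smallmatrix}1&0\\0&1\end{smallmatrix}\right)$, or $X$ is a nonzero nilpotent $2\times 2$ matrix and is therefore conjugate in $\GL_2(\bZ/p\bZ)$ to $\left(\begin{smallmatrix}0&1\\0&0\end{smallmatrix}\right)$, whence $B=I+X$ is conjugate to $\left(\begin{smallmatrix}1&1\\0&1\end{smallmatrix}\right)$. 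The argument is essentially routine once unipotence and the identity $(I+X)^{p^k}=I+X^{p^k}$ are in hand; the only care needed is in tracking where ``$p$ odd'' is used in part (a) and in handling the ceilings when comparing $\log_p\ell$, $\log_3 m$, and $m-2$.
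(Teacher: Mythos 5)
Your proof is correct and follows essentially the same route as the paper: both establish that $B$ is unipotent (sole eigenvalue $1$, i.e.\ Jordan form with $\lambda=1$) and then bound the order by the nilpotency index, your identity $(I+X)^{p^k}=I+X^{p^k}$ being exactly what makes the paper's ``$B^{p^{m-2}}=I$ and $B^{p}=I$'' computation work, together with the estimate $p^{m-2}\geq m$ for odd $p$. Your explicit remark on where $p$ odd enters (with the $p=2$, $m=3$ counterexample) is a useful clarification but not a different argument.
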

\begin{proof}The fact that $B$ has order a power of $p$ implies that $B$ is conjugate to a Jordan matrix with $\lambda=1$ on the diagonal in $\GL_m(\bZ/p\bZ)$. From here, it is easy to see that $B^{p^{m-2}}$ and $B^p$, respectively, for $m\geq3$ and $m=2$, equal the identity matrix in $\GL_m(\bZ/p\bZ)$. The claim now follows.
%Note that $\lambda=1$ is the unique eigenvalue of $B$ because $B$ has order a power of $p$. This implies that $B$ is conjugate to a Jordan matrix with $\lambda=1$ on the diagonal in $\GL_m(\bZ/p\bZ)$. Let $J_{1,d},I_{d\times d}\in\GL_d(\bZ/p\bZ)$, respectively, be the Jordan block with eigenvalue $\lambda = 1$ and the identity matrix. We have
%\[ J_{1,1} = I_{1\times1},\, (J_{1,2})^p = I_{2x2},\, \mbox{ and }(J_{1,d})^{p^{d-2}} = I_{d\times d}\mbox{ for $d\geq3$}.\]
%Then, we see that $B^{p^{m-2}} = I_{m\times m}$ for $m\geq 3$, and $B^p = I_{2\times 2}$ for $m=2$. The claim is now clear.
\end{proof}

\begin{lem}\label{cyclic lem}The following statements hold.  
\begin{enumerate}[(a)]
\item If $m\geq3$, then $\langle \sigma^{p^{r+1}}\rangle\subset\ker(\overline{\fg})$ and so
$[C_{p^n}:\ker(\overline{\fg})] \leq p^{r+1}$. 
\item If $m=2$, then $\langle \sigma^p\rangle\subset\ker(\overline{\fg})$ and so $[C_{p^n}:\ker(\overline{\fg})]\leq p$.
\end{enumerate}
\end{lem}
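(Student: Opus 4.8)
\emph{Approach.} The plan is to pass to the abelian quotient $N/\Phi(N)\simeq(\bZ/p\bZ)^m$ and reduce both parts to an elementary matrix computation. Write $N/\Phi(N)$ additively and set $B=\overline{\ff}(\sigma)\in\GL_m(\bZ/p\bZ)$ and $v=\overline{\fg}(\sigma)\in(\bZ/p\bZ)^m$. Since $\Phi(N)$ is characteristic, $\overline{\fg}$ is a crossed homomorphism with respect to $\overline{\ff}$ (cf.\ the proof of Lemma~\ref{mod char}(b)), so applying~(\ref{cyclic fg}) and using the commutativity of $N/\Phi(N)$ gives
\[ \overline{\fg}(\sigma^k)=\Bigl(\sum_{i=0}^{k-1}B^i\Bigr)v\qquad\text{for all }k\in\bN. \]
Since $\ker(\overline{\fg})$ is a subgroup of $C_{p^n}$ by Lemma~\ref{mod char}(a), it suffices for (a) to show $\overline{\fg}(\sigma^{p^{r+1}})=0$ and for (b) to show $\overline{\fg}(\sigma^{p})=0$; the stated index bounds then follow at once since $\langle\sigma^{p^s}\rangle$ has index at most $p^s$ in $C_{p^n}$.

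\emph{Part (a).} Here $B^{p^r}=I$ because $|\overline{\ff}(\sigma)|=p^r$. Splitting the partial sum into $p$ blocks of length $p^r$,
\[ \sum_{i=0}^{p^{r+1}-1}B^i=\sum_{q=0}^{p-1}(B^{p^r})^q\cdot\sum_{i=0}^{p^r-1}B^i=p\cdot\sum_{i=0}^{p^r-1}B^i, \]
which is the zero matrix over $\bZ/p\bZ$; hence $\overline{\fg}(\sigma^{p^{r+1}})=0$. This step does not use $m\geq3$ at all; isolating the case $m=2$ is only a device to obtain the sharper exponent below.

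\emph{Part (b).} By Lemma~\ref{cyclic matrix lem}(b) we have $r\leq1$ and $B$ is unipotent with $(B-I)^2=0$. Writing $B=I+E$ with $E^2=0$ gives $B^i=I+iE$, hence
\[ \sum_{i=0}^{p-1}B^i=pI+\binom{p}{2}E. \]
Since $p$ is an odd prime, $pI=0$ and $\binom{p}{2}=p\cdot\tfrac{p-1}{2}\equiv0\pmod p$, so this sum vanishes and $\overline{\fg}(\sigma^p)=0$.

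\emph{Expected obstacle.} I do not anticipate a genuine difficulty; the computation is routine. The two points that require attention are that the oddness of $p$ is genuinely used in (b) --- for $p=2$ one has $\binom{2}{2}=1\not\equiv0\pmod 2$ and the improved bound indeed fails --- and that one must invoke Lemma~\ref{mod char}(a) to promote ``$\sigma^{p^s}\in\ker(\overline{\fg})$'' into ``$\langle\sigma^{p^s}\rangle\subset\ker(\overline{\fg})$''.
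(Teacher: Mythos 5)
Your proof is correct and takes essentially the same route as the paper: both pass to the exponent-$p$ quotient $N/\Phi(N)$, obtain $\overline{\fg}(\sigma^{p^{r+1}})=0$ from the crossed-homomorphism identity (\ref{cyclic fg}) together with exponent $p$, and for $m=2$ kill $\sum_{i=0}^{p-1}\overline{\ff}(\sigma)^i$ by the unipotency of $\overline{\ff}(\sigma)$ and the oddness of $p$ via $p(p-1)/2\equiv0\pmod p$. The only cosmetic difference is that you write $B=I+E$ with $E^2=0$ and expand binomially, whereas the paper conjugates $B$ to $\left(\begin{smallmatrix}1&b\\0&1\end{smallmatrix}\right)$ and computes the sum explicitly.
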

\begin{proof}Note that $N/\Phi(N)$ has exponent $p$. By (\ref{cyclic fg}), we also have
\[ \overline{\fg}(\sigma^{p^{r+1}}) = \overline{\fg}(\sigma^{p^r})^p\mbox{ and }\overline{\fg}(\sigma^p) = \prod_{i=0}^{p-1}\overline{\ff}(\sigma)^i(\overline{\fg}(\sigma)).\]
The claim for $m\geq3$ then follows from the first equality. Now, suppose that $m=2$. Then, regarding $\overline{\ff}(\sigma)$ an element in $\GL_2(\bZ/p\bZ)$, by Lemma~\ref{cyclic matrix lem}, it is conjugate to a matrix $\left(\begin{smallmatrix}1&b\\0&1\end{smallmatrix}\right)$ with $b\in\bZ/p\bZ$. Since
\begin{equation}\label{p odd}
\sum_{i=0}^{p-1}\begin{pmatrix}1&b\\0&1 \end{pmatrix}^i  = \begin{pmatrix} p & \frac{p(p-1)b}{2} \\ 0 & p \end{pmatrix} = \mbox{zero matrix in $\GL_2(\bZ/p\bZ)$},\end{equation}
we see that indeed $\sigma^p\in\ker(\overline{\fg})$. This proves the claim.
\end{proof}

\begin{proof}[Proof of Theorem~\ref{cyclic thm}]Lemmas~\ref{cyclic matrix lem} and~\ref{cyclic lem} imply that 
\[[C_{p^n} : \ker(\overline{\fg})] \leq p^{m-1}.\]
We then see from Lemma~\ref{mod char} (b) that $\overline{\fg}$ indeed cannot be surjective.
\end{proof}

\begin{remark}Note that the hypothesis that $p$ is odd is required for the second equality in (\ref{p odd}). In fact, the analogous statement of Theorem~\ref{cyclic thm} for $p=2$ is false, as shown in \cite[Corollary 5.3]{Byott almost cyclic}.
\end{remark}

\subsection{Groups of order $n$ factorial} In what follows, let $n\in\bN$ with $n\geq5$. Recall that $2A_n$ is the unique group, up to isomorphism, fitting into a short exact sequence
\[\begin{tikzcd}[column sep = 1cm]1 \arrow{r}& \{\pm1\}\arrow{r}{\iota}&2A_n\arrow{r} &A_n\arrow{r}& 1\end{tikzcd}\]such that $\iota(\{\pm1\})$ lies in both $Z(2A_n)$ and $[2A_n,2A_n]$. It is known that $2A_n$ is quasisimple and $Z(2A_n)\simeq \{\pm1\}$. We then have:

\begin{lem}\label{2An unique}If $N = [N,N]$ and there is a normal subgroup $M$ of $N$ having order two such that $N/M\simeq A_n$, then necessarily $N \simeq 2A_n$.
\end{lem}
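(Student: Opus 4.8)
The plan is to show that a group $N$ with $N=[N,N]$ possessing a normal subgroup $M$ of order two with $N/M \simeq A_n$ must actually realize the defining property of $2A_n$: namely that $M$ lies in both $Z(N)$ and $[N,N]$, whence by the stated uniqueness of $2A_n$ we conclude $N\simeq 2A_n$. Since $[N,N]=N$ by hypothesis, the condition $M\subset [N,N]$ is automatic, so the entire content is to prove that $M\subset Z(N)$, i.e.\ that $N$ acts trivially by conjugation on the two-element normal subgroup $M$.

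First I would observe that since $|M|=2$, the group $M$ is automatically normal and moreover conjugation gives a homomorphism $N\longrightarrow \Aut(M)$. But $\Aut(M)$ is trivial because $M\simeq\{\pm1\}$ has no nontrivial automorphisms. Hence every element of $N$ centralizes $M$, that is, $M\subset Z(N)$. This is the crux; it is short once one notices that the order-two subgroup has trivial automorphism group. Then $M$ is central, and combined with $M\subset N=[N,N]$ we have exhibited $N$ as a group fitting into the short exact sequence
\[\begin{tikzcd}[column sep = 1cm]1 \arrow{r}& \{\pm1\}\arrow{r}{\iota}& N\arrow{r} &A_n\arrow{r}& 1\end{tikzcd}\]
with $\iota(\{\pm1\}) = M$ contained in both $Z(N)$ and $[N,N]$. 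By the stated uniqueness of $2A_n$ among groups admitting such a sequence, it follows that $N\simeq 2A_n$.

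One subtlety worth addressing explicitly is why the uniqueness clause applies: I should confirm the identification $M\simeq\{\pm1\}$ (trivially true, as $|M|=2$) and check that the quotient map $N\longrightarrow N/M\simeq A_n$ together with the inclusion of $M$ genuinely forms a short exact sequence of the required shape, with no choices left ambiguous. The uniqueness statement for $2A_n$ is given before Lemma~\ref{2An unique} in the excerpt, so I may invoke it directly. I do not expect any real obstacle here: the only step that carries any content is the triviality of $\Aut(\bZ/2\bZ)$ forcing centrality of $M$, and everything else is bookkeeping to match the hypotheses of the uniqueness statement. If one wanted to be even more self-contained, one could alternatively argue that any subgroup of order two that is normal in $N$ must be central because the conjugation action of the connected-in-the-sense-of-perfect group $N=[N,N]$ on the abelian group $M$ factors through $N/[N,N]=1$; but the $\Aut(M)=1$ argument is cleaner and avoids even that.
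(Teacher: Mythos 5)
Your proof is correct and matches the paper's: the paper's one-line argument is precisely your key observation that a normal subgroup of order two is central (equivalently, $\Aut(\bZ/2\bZ)$ is trivial), after which $M\subset[N,N]=N$ is automatic and the defining uniqueness of $2A_n$ gives the conclusion. No issues.
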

\begin{proof}This is because a normal subgroup of order two lies in the center.
\end{proof}

There are some similarities in the proofs of Theorems~\ref{2An thm} and~\ref{sym thm} because:
\begin{enumerate}[(i)]
\item Both $2A_n$ and $S_n$ have order $n!$. 
\item Both $2A_n$ and $S_n$ have only one non-trivial proper normal subgroup.
\item The alternating group $A_n$ is a subgroup of $S_n$ and is a quotient of $2A_n$.
\end{enumerate}
For (ii), we in particular know that
\begin{equation}\label{normal}\begin{cases}
Z(2A_n)\mbox{ is the non-trivial proper normal subgroup of }2A_n,\\
A_n\mbox{ is the non-trivial proper normal subgroup of }S_n,
\end{cases}\end{equation}
where the first statement follows from Proposition~\ref{QS prop} (a). Given a prime $p$ and a group $\Gamma$, write $v_p(\Gamma)$ for the non-negative integer such that 
\[p^{v_p(\Gamma)}=\mbox{the exact power of $p$ dividing $|\Gamma|$}.\]
Motivated by the arguments in \cite{Byott simple}, we shall require the next two lemmas.

\begin{lem}\label{An lem}If $A_n$ has a subgroup of prime power index $p^m$, then $n = p^m$.
\end{lem}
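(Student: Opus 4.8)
The plan is to first establish the classical lower bound $[A_n:H]\ge n$ for any proper subgroup $H\le A_n$, and then to show that a \emph{prime-power} index is forced all the way down to exactly $n$. For the lower bound: since $n\ge 5$, the group $A_n$ is simple and non-abelian, so the action of $A_n$ on the cosets of $H$ has trivial kernel (its kernel is a proper normal subgroup, hence trivial), giving an embedding $A_n\hookrightarrow S_{[A_n:H]}$. Comparing orders, $n!/2=|A_n|\le [A_n:H]!$, and since $(n-1)!=n!/n<n!/2$ for $n\ge 3$, this forces $[A_n:H]\ge n$. Writing $[A_n:H]=p^m$, it remains to prove $p^m\le n$.

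For the upper bound I would pass to a maximal subgroup $K$ with $H\le K\le A_n$. Then $[A_n:K]$ divides $p^m$, so it is again a prime power, say $p^{m_1}$, and it is $\ge n$ by the previous step. Now I invoke the classification of the maximal subgroups of $A_n$ (O'Nan--Scott together with the Aschbacher-type analysis; in the special case of subgroups of prime-power index in a finite simple group this is exactly the theorem of Guralnick): the only maximal subgroups of $A_n$ of prime-power index are the point stabilizers $A_{n-1}$, and these occur precisely when $n$ itself is a prime power, the index then being $n$. Hence $[A_n:K]=n$, the integer $n$ is a power of $p$, and $K$ is a copy of $A_{n-1}$.

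To finish I would argue by induction on $n$. If $H=K$ we are done, since then $[A_n:H]=n=p^m$. If $H\subsetneq K\cong A_{n-1}$, then $[A_{n-1}:H]=p^{m-m_1}$ is a positive power of $p$; for $n\ge 6$ the inductive hypothesis applied to $A_{n-1}$ gives $n-1=p^{m-m_1}$, so $n-1$ is also a power of $p$ — impossible, since $n$ and $n-1$ are coprime and both exceed $1$. Thus $H=K$ and $[A_n:H]=n$, and the base case $n=5$ is a direct inspection of the subgroup lattice of $A_5$ (the only proper subgroup of prime-power index is $A_4$, of index $5$). In all cases $n=p^m$, as claimed. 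Alternatively one may skip the induction entirely and quote Guralnick's theorem, which describes \emph{all} subgroups of prime-power index in a simple group, directly yielding $H\cong A_{n-1}$ and $n=p^m$ once one checks that the exceptional isomorphisms $A_5\cong\mathrm{PSL}_2(4)$, $A_6\cong\mathrm{PSL}_2(9)$, $A_8\cong\mathrm{PSL}_4(2)$ produce no prime-power index other than $n$.

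The step I expect to be the main obstacle is the passage from ``index $\ge n$'' to ``index $=n$'' in the prime-power case, i.e. the input on maximal subgroups. If one wishes to avoid the full classification, Bertrand's postulate disposes of part of it cheaply: letting $q$ be the largest prime with $q\le n$, one has $n/2<q\le n$ and $v_q(|A_n|)=1$, so when $p=q$ the relation $v_q(|H|)=1-m\ge 0$ already forces $m\le 1$ and hence $p^m=q=n$. The stubborn case is $p\ne q$: then $q$ divides $|H|$, so $H$ contains a $q$-cycle, but this alone does not bound $|H|$ from below, and squeezing out $p^m\le n$ seems to genuinely require either a Jordan-type theorem on primitive groups containing a long cycle or, once more, structural information about the subgroup lattice of $A_n$.
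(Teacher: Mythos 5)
Your argument is correct, but its essential input --- the classification of (maximal) subgroups of prime-power index in $A_n$, i.e.\ Guralnick's theorem --- is exactly what the paper uses: its entire proof is the citation \cite[(2.2)]{RG}. The preliminary bound $[A_n:H]\geq n$ and the reduction to a maximal subgroup followed by induction are harmless but redundant, since Guralnick's result already covers arbitrary subgroups, so in substance you take the same route as the paper.
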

\begin{proof}See \cite[(2.2)]{RG}.
\end{proof}

\begin{lem}\label{val lem} Let $m\in\bN$ and let $p$ be a prime. Then, we have
\begin{enumerate}[(a)]
\item $|\GL_m(\bZ/p\bZ)| < \frac{1}{2}(p^m!)$ and $v_p(\GL_m(\bZ/p\bZ)) = m(m-1)/2$,
\item $v_p(S_m) < m$ and $v_p(S_{p^m}) \geq m(m+1)/2$,
\item $v_2(S_{2^{m-1}}) \geq m(m-1)/2 + 2$ for $m\geq 5$.
\end{enumerate}
\end{lem}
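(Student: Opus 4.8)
The plan is to prove Lemma~\ref{val lem} by elementary counting arguments, treating each of the three parts essentially independently, with parts (b) and (c) both resting on Legendre's formula for the $p$-adic valuation of a factorial.

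\textit{Part (a).} First I would recall the standard order formula $|\GL_m(\bZ/p\bZ)| = \prod_{i=0}^{m-1}(p^m - p^i)$. Each factor $p^m - p^i$ is strictly less than $p^m$, so $|\GL_m(\bZ/p\bZ)| < p^{m^2}$; I would then compare $p^{m^2}$ with $\frac{1}{2}(p^m)!$ and check that the factorial dominates for all $m\geq1$ and all primes $p$ (e.g.\ for $m=1$ one checks the small cases $p=2,3$ directly and uses $p!>2p$ for $p\geq5$; for $m\geq2$ one has $(p^m)! \geq (p^m)(p^m-1)\cdots \geq$ a product of $p^m$ factors the vast majority of which exceed $p^m/2$). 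For the valuation statement, the exact power of $p$ dividing $\prod_{i=0}^{m-1}(p^m-p^i)$ comes only from the factors $p^i$ pulled out, namely $p^{0}p^{1}\cdots p^{m-1} = p^{0+1+\cdots+(m-1)}$, since $p^m - p^i = p^i(p^{m-i}-1)$ and $p^{m-i}-1$ is coprime to $p$; hence $v_p(\GL_m(\bZ/p\bZ)) = 0+1+\cdots+(m-1) = m(m-1)/2$.

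\textit{Parts (b) and (c).} Here I would invoke Legendre's formula $v_p(k!) = \sum_{j\geq1}\lfloor k/p^j\rfloor$. For the upper bound $v_p(S_m) < m$, bound the sum by the geometric series $\sum_{j\geq1} m/p^j = m/(p-1) \leq m$, with equality only possible when $p=2$, and in that case note the floor functions make the inequality strict (e.g.\ $\lfloor m/2^j\rfloor = 0$ for $2^j > m$, so the sum is strictly less than $m$). For $v_p(S_{p^m}) \geq m(m+1)/2$, observe that the terms $j=1,\dots,m$ of Legendre's formula for $v_p((p^m)!)$ contribute exactly $\lfloor p^m/p^j\rfloor = p^{m-j} \geq 1$, hence $v_p((p^m)!) \geq \sum_{j=1}^{m} p^{m-j} \geq \sum_{j=1}^m 1 = m$; to get the sharper bound $m(m+1)/2$ one uses $p^{m-j}\geq m-j+1$ (valid since $p\geq2$ and $p^{t}\geq t+1$), giving $\sum_{j=1}^m p^{m-j} \geq \sum_{j=1}^m (m-j+1) = m(m+1)/2$. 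For part (c), apply the same idea to $v_2(S_{2^{m-1}}) = v_2((2^{m-1})!) = \sum_{j\geq1}\lfloor 2^{m-1}/2^j\rfloor = \sum_{j=1}^{m-1} 2^{m-1-j} = 2^{m-1}-1$; then verify $2^{m-1} - 1 \geq m(m-1)/2 + 2$ for $m\geq5$, which holds at $m=5$ ($15 \geq 12$) and propagates by induction since the left side roughly doubles while the right side grows quadratically.

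I do not expect any genuine obstacle here; this is a lemma of the "collect the estimates we will need later" type, and every step is a routine inequality. The only mildly delicate point is keeping the small-case checks honest in part (a) (the bound $|\GL_m| < \frac12(p^m!)$ is tightest when $m=1$ and $p$ is small) and making sure the induction base in part (c) is stated at $m=5$ as the hypothesis requires. I would organize the write-up as three short paragraphs mirroring (a), (b), (c), citing Legendre's formula once at the outset.
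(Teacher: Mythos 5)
Your handling of the valuation claims---the second half of (a), both parts of (b), and part (c)---is correct and is essentially the paper's own argument: the paper also works from $|\GL_m(\bZ/p\bZ)|=\prod_{i=0}^{m-1}(p^m-p^i)$ and $v_p(S_m)=\sum_{j\geq1}\lfloor m/p^j\rfloor$, and obtains the two lower bounds by observing that $p^m!$ is divisible by the product of the distinct integers $p,p^2,\dots,p^m$ and that $2^{m-1}!$ is divisible by $2\cdot4\cdots 2^{m-1}\cdot 6\cdot 10$ for $m\geq5$; your Legendre computations ($\sum_{j=1}^m p^{m-j}\geq m(m+1)/2$ and $v_2\bigl((2^{m-1})!\bigr)=2^{m-1}-1\geq m(m-1)/2+2$ for $m\geq5$) give exactly the same counts.

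The step that fails is in the first claim of (a). Your intermediate bound $p^{m^2}<\frac{1}{2}(p^m!)$ is false at $(m,p)=(2,2)$: there $p^{m^2}=16$ while $\frac{1}{2}\cdot 4!=12$, so the ``most factors exceed $p^m/2$'' estimate cannot close that case, and your case analysis (small cases only for $m=1$) does not catch it; the actual inequality $|\GL_2(\bZ/2\bZ)|=6<12$ holds but needs a direct check. Moreover, at $(m,p)=(1,2)$ the stated inequality is in fact an equality, $|\GL_1(\bZ/2\bZ)|=1=\frac{1}{2}\cdot 2!$, so the direct verification you promise for that case cannot succeed; this is an edge case of the lemma's statement itself (harmless in the paper, which only invokes (a) with $p^m=n\geq5$), but your write-up should exclude it rather than claim to verify it. A clean repair valid for all $p^m\geq3$: $\GL_m(\bZ/p\bZ)$ acts faithfully on the $p^m-1$ nonzero vectors, so $|\GL_m(\bZ/p\bZ)|\leq (p^m-1)!=(p^m!)/p^m<\frac{1}{2}(p^m!)$.
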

\begin{proof}Both claims in (a) and the first claim in (b) follow from
\[ |\GL_m(\bZ/p\bZ)| = \prod_{i=0}^{m-1}(p^m - p^i)\mbox{ and } v_p(S_m) = \sum_{i=1}^{\infty}\left\lfloor\frac{m}{p^i}\right\rfloor,\]
respectively, as in \cite[(4.1) and Lemma 3.3]{Byott simple}. The two remaining claims hold because $p^m!$ is divisible by $p\cdots p^m$, and $2^{m-1}!$ is divisible by $2\cdots 2^{m-1}\cdot 6\cdot 10$ for $m\geq 5$. %\prod_{i=0}^{m-1}(p^m-p^i): the factors are less than p^m and are distinct; we can divide by an extra factor of 2 because we can divide by p, which is not covered by any of the factors
\end{proof}

Now, for both Theorems~\ref{2An thm} and~\ref{sym thm}, let $N$ be a group of order $n!$, and let $M$ be any proper maximal characteristic subgroup of $N$. The quotient $N/M$ is then a non-trivial and \emph{characteristically simple} group, meaning that it has no non-trivial proper characteristic subgroup. It is then known that
\begin{equation}\label{N/M} N/M\simeq T^m,\mbox{ where $T$ is simple and $m\in\bN$}.\end{equation}
As shown in \cite[Lemma 3.2]{Byott simple}, we have
\[\Aut(N/M)\simeq \Aut(T)^m\rtimes S_m\mbox{ when $T$ is non-abelian}.\]
The structure of $\Aut(N/M)$ is of course well-understood when $T$ is abelian.

\subsubsection{The double cover of alternating groups}\label{2An sec} To prove Theorem~\ref{2An thm}, in this section, we shall assume that $N\not\simeq 2A_n$. Consider
\[ \ff \in \Hom(2A_n,\Aut(N)) \mbox{ and }\fg\in Z_\ff^1(2A_n,N).\]
By (\ref{Byott formula}) and Proposition~\ref{corr thm}, it is enough to show that $\fg$ cannot be bijective. We shall use the same notation as in (\ref{N/M}), and let $\overline{\ff},\overline{\fg}$ be as in (\ref{fg bar}). Then, in turn, it suffices to show that $\overline{\fg}$ cannot be surjective. 

\begin{lem}\label{2An lem1}If $N = [N:N]$, then $\overline{\ff}$ is trivial.
\end{lem}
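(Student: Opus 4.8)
The plan is to show that $\overline{\ff}\colon 2A_n\longrightarrow\Aut(N/M)$ is trivial by exploiting that $N=[N,N]$ together with the structure $N/M\simeq T^m$ and $\Aut(N/M)\simeq\Aut(T)^m\rtimes S_m$. Since $2A_n$ is quasisimple, in particular $2A_n=[2A_n,2A_n]$, any homomorphism from $2A_n$ into a group with a solvable quotient must land in the kernel of the projection onto that quotient; so the first step is to compose $\overline{\ff}$ with $\Aut(N/M)\longrightarrow\Out(N/M)$-type projections and kill off the solvable pieces. I would split into the two cases $T$ non-abelian and $T$ abelian.

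In the abelian case $T\simeq\bZ/p\bZ$, so $N/M\simeq(\bZ/p\bZ)^m$ and $\Aut(N/M)\simeq\GL_m(\bZ/p\bZ)$. Here I would argue via orders: $N=[N,N]$ forces $N/M$ to be a quotient of $2A_n$, hence $(\bZ/p\bZ)^m$ is a quotient of $2A_n/[2A_n,2A_n]$... wait, that is trivial, so actually $N/M$ being abelian and a homomorphic image issue needs care. Better: use Proposition~\ref{QS prop}-style reasoning — since $2A_n$ is perfect, the image $\overline{\ff}(2A_n)$ is a perfect subgroup of $\GL_m(\bZ/p\bZ)$; but I want it trivial. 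The cleanest route is the order/index bound: a perfect subgroup of $\GL_m(\bZ/p\bZ)$ acting on $(\bZ/p\bZ)^m$ would, via $\overline{\fg}$, need $\ker(\overline{\fg})$ to have index at most $|N/M|=p^m$ in $2A_n$ for surjectivity of $\overline{\fg}$; combined with Lemma~\ref{An lem} (subgroups of $A_n$, hence of $2A_n$ modulo the center, of prime-power index force $n=p^m$) and the valuation estimates in Lemma~\ref{val lem}, one derives a contradiction with $|2A_n|=n!$ unless $\overline{\ff}$ is trivial. I expect the bookkeeping here to mirror Byott's argument in \cite{Byott simple}.

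In the non-abelian case $T$ is a non-abelian simple group and $\Aut(N/M)\simeq\Aut(T)^m\rtimes S_m$. Composing $\overline{\ff}$ with the projection to $S_m$: since $v_p(S_m)<m$ for all $p$ (Lemma~\ref{val lem}(b)) while $2A_n$ is highly divisible, and more directly since $2A_n$ is perfect, its image in $S_m$ lands in $[S_m,S_m]=A_m$ — but if $m<n$ this already tends to force triviality via Lemma~\ref{An lem}-type index constraints. Then projecting to $\Out(T)^m$: each $\Out(T)$ is solvable by Schreier (Proposition~\ref{simple prop}(a)), so the perfect group $2A_n$ maps trivially there, hence $\overline{\ff}(2A_n)\subset\Inn(T)^m\rtimes(\text{image in }S_m)$. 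Finally I would use that $2A_n$ has a unique non-trivial proper normal subgroup (its center, of order $2$) by \eqref{normal}: any non-trivial homomorphism $2A_n\longrightarrow\Inn(T)^m\rtimes S_m$ would have kernel contained in $Z(2A_n)$, forcing $|N/M|=|T^m|\geq |2A_n|/2=n!/2$, which together with $|T|\mid |2A_n|$ and the structure of $T$ (again Lemma~\ref{An lem} and Lemma~\ref{val lem}) is incompatible unless the homomorphism is trivial.

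The main obstacle I anticipate is the non-abelian case: controlling the image of $\overline{\ff}(2A_n)$ inside $\Aut(T)^m\rtimes S_m$ after the easy solvable quotients are removed — one must rule out $2A_n$ embedding (projectively, modulo its order-$2$ center) into $\Inn(T)^m$ for various $(T,m)$, and this is exactly where the numerical lemmas (\ref{An lem}) and (\ref{val lem}) have to be deployed carefully, presumably by comparing $v_p$ of $n!$ against what $\Aut(T)^m\rtimes S_m$ can supply. Once $\overline{\ff}$ is shown trivial, Lemma~\ref{mod char}(c) will make $\overline{\fg}$ a genuine homomorphism $2A_n\longrightarrow N/M$, setting up the contradiction with surjectivity to be completed in the subsequent lemmas.
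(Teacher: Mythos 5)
There is a genuine gap, and it sits in the endgame of your non-abelian case, which is in fact the only case: since $N=[N,N]$, the quotient $N/M\simeq T^m$ is a non-trivial perfect group, so $T$ is automatically non-abelian. Your abelian-case detour is therefore unnecessary, and it is also illegitimate for this lemma, since it leans on surjectivity of $\overline{\fg}$, which is not a hypothesis here (that situation is treated separately, in Lemma~\ref{2An lem2}). In the non-abelian case, your reduction to $\overline{\ff}(2A_n)\subset\Inn(T)^m\simeq T^m$ is only gestured at: killing the $S_m$-component is not a matter of ``$m<n$ tends to force triviality'' --- a bare order count does not even give $m<n$ once $n!\geq |T|^n$ --- but is done in the paper by choosing an odd prime $p$ dividing $|T|$ and comparing $v_p(S_m)<m$ with $v_p(2A_n)=v_p(N)=m\,v_p(T)+v_p(M)\geq m$, using that the only proper normal subgroups of $2A_n$ are $1$ and its order-two center. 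The more serious problem is your final step: you claim that $|\ker(\overline{\ff})|\leq 2$, hence $|T|^m\geq n!/2$, is ``incompatible'' with the numerology from Lemmas~\ref{An lem} and~\ref{val lem}. It is not: the configuration $m=1$, $T\simeq A_n$, $|M|=2$ satisfies every numerical constraint, so no contradiction can be extracted from those lemmas alone.

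What is actually needed --- and what your proposal never invokes --- is the standing hypothesis of this subsection that $N\not\simeq 2A_n$, together with Lemma~\ref{2An unique} and the perfectness of $N$. Indeed, without $N\not\simeq 2A_n$ the statement is false: for $N=2A_n$, $M=Z(2A_n)$ and $\ff(\sigma)=\conj(\sigma)$, the induced map $\overline{\ff}:2A_n\longrightarrow\Aut(A_n)$ is non-trivial, so any proof that does not use this hypothesis cannot be correct. The paper closes the argument structurally: from $[2A_n:\ker(\overline{\ff})]\leq |T|^m=|N|/|M|$ one gets $|M|\leq|\ker(\overline{\ff})|$; if $\ker(\overline{\ff})=1$ and $|M|=1$, then $N\simeq T^m\simeq 2A_n$, contradicting $N\not\simeq 2A_n$; if $\ker(\overline{\ff})=Z(2A_n)$ and $|M|=1$, then $\overline{\ff}(2A_n)$ has index two, hence is normal, in the perfect group $N$, which is impossible; and if $\ker(\overline{\ff})=Z(2A_n)$ and $|M|=2$, then $N/M\simeq A_n$, so $N\simeq 2A_n$ by Lemma~\ref{2An unique}, again a contradiction. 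Your proposal is missing exactly this case analysis, and it cannot be replaced by the valuation lemmas you cite.
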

\begin{proof}Suppose that $N = [N,N]$, in which case $T$ is non-abelian. Consider
\begin{equation}\label{proj Sm}\begin{tikzcd}[column sep = 1.5cm]
2A_n \arrow{r}{\overline{\ff}}& \Aut(N/M) \ar[equal]{r}{ \mbox{\tiny identification}}  & \Aut(T)^m\rtimes S_m \arrow{r}{\mbox{\tiny projection}} & S_m.
\end{tikzcd}\end{equation}
Notice that $|T|$ has an odd prime factor $p$ because a $2$-group has non-trivial center. We have $v_p(S_m) < m$ by Lemma~\ref{val lem} (b) while
\[ v_p(2A_n/Z(2A_n)) = v_p(2A_n) = v_p(N) = m\cdot v_p(T) + v_p(M) \geq m. \]
We then deduce from (\ref{normal}) that (\ref{proj Sm}) must be trivial. It follows that $\overline{\ff}(2A_n)$ lies in $\Aut(T)^m$. Note that the homomorphism
\[\begin{tikzcd}[column sep = 1.5cm]
2A_n \arrow{r}{\overline{\ff}} &\Aut(T)^m \arrow{r}{\mbox{\tiny projection}} & \Out(T)^m
\end{tikzcd}\]
must also be trivial, because $2A_n = [2A_n,2A_n]$ while $\Out(T)^m$ is solvable by Proposition~\ref{simple prop} (a). Hence, in fact $\overline{\ff}:2A_n\longrightarrow\Inn(T)^m\simeq T^m$.

\vspace{1.5mm}

Now, by (\ref{normal}), either $\overline{\ff}$ is trivial or $|\ker(\overline{\ff})| \leq 2$. Observe that
\[ [2A_n:\ker(\overline{\ff})] \leq |T|^m = [N:M] = |2A_n|/|M| \mbox{ and so }|M|\leq |\ker(\overline{\ff})|.\]
If $|\ker(\overline{\ff})|=1$, then $|M|= 1$, and we deduce that $2A_n\simeq T^m\simeq N$, which is a contradiction. If $|\ker(\overline{\ff})| = 2$ and $|M|=1$, then $\overline{\ff}(2A_n)$ has index two and in particular is normal in $T^m\simeq N$, but this is impossible because $N = [N,N]$. Finally, if $|\ker(\overline{\ff})| = 2$ and $|M|=2$, then $A_n\simeq\overline{\ff}(2A_n)\simeq T^m\simeq N/M$, which contradicts that $2A_n\not\simeq N$ by Lemma~\ref{2An unique}. Thus, indeed $\overline{\ff}$ must be trivial.
\end{proof}

\begin{lem}\label{2An lem2}If $N/M$ is abelian and $\overline{\fg}$ is surjective, then $\overline{\ff}$ is trivial. 
\end{lem}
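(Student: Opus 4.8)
The plan is to show that if $N/M$ is abelian and $\overline{\fg}$ is surjective, then $\overline{\ff}$ cannot be non-trivial. Since $N/M$ is abelian and characteristically simple, by (\ref{N/M}) we have $N/M\simeq T^m$ with $T\simeq\bZ/p\bZ$ for some prime $p$, so $N/M\simeq(\bZ/p\bZ)^m$ and $\Aut(N/M)\simeq\GL_m(\bZ/p\bZ)$. The target $\overline{\ff}:2A_n\longrightarrow\GL_m(\bZ/p\bZ)$ must be analyzed: since $2A_n=[2A_n,2A_n]$ is quasisimple, by Proposition~\ref{QS prop} (a) its image under $\overline{\ff}$ is either trivial or has kernel contained in $Z(2A_n)$, hence $\ker(\overline{\ff})$ has index $1$ or $2$ in the image; but the image is a non-abelian simple-by-center group, so actually either $\overline{\ff}$ is trivial or $\overline{\ff}(2A_n)$ has order $|2A_n|$ or $|2A_n|/2 = |A_n|$. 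The first goal is to rule out both non-trivial possibilities.

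First I would use the surjectivity of $\overline{\fg}$ to get $[2A_n:\ker(\overline{\fg})] = |N/M| = p^m$ via Lemma~\ref{mod char} (b), forcing $2A_n$ to have a subgroup of prime power index $p^m$; by Lemma~\ref{An lem} applied to the quotient $A_n$ (note $Z(2A_n)$ has order $2$, so a subgroup of $2A_n$ of $p$-power index with $p$ odd contains $Z(2A_n)$ and descends to $A_n$, while for $p=2$ one argues with $A_n$ directly after noting the index in $A_n$ is $2^{m-1}$ or $2^m$), this pins down $n$ as a prime power $p^m$ (or $2^{m-1}$). Then I would compare $p$-adic valuations: on one hand $v_p(\GL_m(\bZ/p\bZ)) = m(m-1)/2$ by Lemma~\ref{val lem} (a), which bounds $v_p(\overline{\ff}(2A_n))$; on the other hand $v_p(2A_n/\ker(\overline{\ff}))$ equals $v_p(2A_n) = v_p(n!)$ or $v_p(n!/2)$, and by Lemma~\ref{val lem} (b),(c) we have $v_p(S_{p^m})\geq m(m+1)/2$ and $v_2(S_{2^{m-1}})\geq m(m-1)/2+2$, each exceeding the bound $m(m-1)/2$ coming from $\GL_m$. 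This contradiction, in every case $n=p^m$ or $n=2^{m-1}$, forces $\overline{\ff}$ to be trivial.

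The main obstacle I expect is the bookkeeping around the order-two center: one must carefully track whether $\ker(\overline{\ff})$ or $\ker(\overline{\fg})$ contains $Z(2A_n)$, and correspondingly whether the relevant index is computed in $2A_n$, in $A_n$, or off by a factor of $2$. In particular the case $p=2$ needs the separate estimate Lemma~\ref{val lem} (c): there $n = 2^m$ is excluded by Lemma~\ref{An lem} applied to a subgroup of index $2^m$ in $2A_n$ only after accounting for the center, so the genuinely surviving case is $n = 2^{m-1}$ (index $2^{m-1}$ in $A_n$), and the bound $v_2(S_{2^{m-1}})\geq m(m-1)/2+2 > m(m-1)/2$ is exactly what defeats it. Provided these index computations are done cleanly, the valuation comparison closes the argument in all cases, and we conclude $\overline{\ff}$ is trivial.
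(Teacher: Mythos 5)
Your overall strategy is the same as the paper's: use surjectivity of $\overline{\fg}$ and Lemma~\ref{mod char}~(b) to get $[2A_n:\ker(\overline{\fg})]=p^m$, split according to whether $Z(2A_n)$ lies in $\ker(\overline{\fg})$, invoke Lemma~\ref{An lem} to pin down $n=p^m$ or $n=2^{m-1}$, and then compare sizes/valuations of $2A_n/\ker(\overline{\ff})$ with $\GL_m(\bZ/p\bZ)$. However, there is a genuine gap in the branch $n=2^{m-1}$: Lemma~\ref{val lem}~(c) gives $v_2(S_{2^{m-1}})\geq m(m-1)/2+2$ only for $m\geq 5$, while Lemma~\ref{An lem} together with $n\geq 5$ only forces $m\geq 4$. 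Your claim that the bound ``defeats it'' in every case is false for $m=4$: there $n=8$, $v_2(S_8)=7<8$, and in fact $v_2(A_8)=6=v_2(\GL_4(\bZ/2\bZ))$ and $|A_8|=|\GL_4(\bZ/2\bZ)|=20160$, reflecting the exceptional isomorphism $A_8\simeq\GL_4(\bZ/2\bZ)$. So no order or valuation count can rule out a non-trivial $\overline{\ff}$ here: $2A_8$ really does admit a surjection onto $\Aut(N/M)\simeq\GL_4(\bZ/2\bZ)$ with kernel $Z(2A_8)$, and your argument stops short of excluding it.

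The paper closes exactly this case by a different mechanism, which your proposal does not contain: if $\overline{\ff}$ were non-trivial with $n=8$, $m=4$, then by the order count it must be surjective with $\ker(\overline{\ff})=Z(2A_8)$; for $\sigma\in Z(2A_8)=\ker(\overline{\ff})\cap Z(2A_8)$, Lemma~\ref{mod char}~(d) (this uses that $N/M$ is abelian) shows $\overline{\fg}(\sigma)$ is fixed by every automorphism in $\overline{\ff}(2A_8)=\GL_4(\bZ/2\bZ)$ acting on $(\bZ/2\bZ)^4$, hence $\overline{\fg}(\sigma)=1_{N/M}$, so $Z(2A_8)\subset\ker(\overline{\fg})$ — contradicting the defining hypothesis $\ker(\overline{\fg})\cap Z(2A_8)=1$ of this branch. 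Without some such supplementary argument (your sketch never uses Lemma~\ref{mod char}~(d) or any property of $\overline{\fg}$ beyond the index computation), the proof is incomplete. A secondary, smaller point: your remark that ``$n=2^m$ is excluded by Lemma~\ref{An lem}'' is misstated — when $Z(2A_n)\subset\ker(\overline{\fg})$ and $p=2$ the case $n=2^m$ does occur and must be (and is, easily) handled by the same comparison as for odd $p$; only the complementary case $\ker(\overline{\fg})\cap Z(2A_n)=1$ leads to $n=2^{m-1}$.
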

\begin{proof}Recall that $2A_n/Z(2A_n)\simeq A_n$, and note that
\[ \left[\frac{2A_n}{Z(2A_n)}:\frac{\ker(\overline{\fg})Z(2A_n)}{Z(2A_n)}\right] = \begin{cases}
\hspace{2.5mm}[2A_n : \ker(\overline{\fg})] &\mbox{if $Z(2A_n)\subset \ker(\overline{\fg})$},\\
\frac{1}{2}[2A_n:\ker(\overline{\fg})] & \mbox{if $\ker(\overline{\fg})
\cap Z(2A_n)=1$}.
\end{cases}\]
These are the only cases because $Z(2A_n)$ has order two. 

\vspace{1.5mm}

Suppose that $N/M$ is abelian and $\overline{\fg}$ is surjective. Then, we have $T\simeq \bZ/p\bZ$ for some prime $p$, and $[2A_n:\ker(\overline{\fg})] = p^m$ by Lemma~\ref{mod char} (b).  We also have
\[\begin{cases}
n = p^m &\mbox{if $Z(2A_n)\subset \ker(\overline{\fg})$},\\
n = 2^{m-1}\mbox{ with }m\geq 4 &\mbox{if $\ker(\overline{\fg})
\cap Z(2A_n)=1$},
\end{cases}\]
by Lemma~\ref{An lem}. Recall Lemma~\ref{val lem}, and observe that
\[\begin{cases} [2A_{p^m}:Z(2A_{p^m})] > |\GL_m(\bZ/p\bZ)|\\
 v_2(2A_{2^{m-1}}/Z(2A_{2^{m-1}})) > v_2(\GL_m(\bZ/2\bZ))\mbox{ for $m\geq5$}.
\end{cases}\]
Since $2A_n/\ker(\overline{\ff})$ embeds into $\GL_m(\bZ/p\bZ)$, we then deduce from (\ref{normal}) that $\overline{\ff}$ \par\noindent must be trivial, except possibly when 
\begin{equation}\label{last case}\ker(\overline{\fg})\cap Z(2A_n)=1\mbox{ and }n=2^{m-1}\mbox{ for }m=4.\end{equation}
In this last case (\ref{last case}), suppose for contradiction that $\overline{\ff}$ is non-trivial. Since
\begin{equation}\label{m=4}|2A_8| = 40320\mbox{ and }|\GL_4(\bZ/2\bZ)| = 20160,\end{equation}
necessarily $\overline{\ff}$ is surjective and $\ker(\overline{\ff}) = Z(2A_8)$. For any $\sigma\in Z(2A_8)$, we then deduce from Lemma~\ref{mod char} (d) that $\overline{\fg}(\sigma)$ is a fixed point of every automorphism on $N/M\simeq (\bZ/2\bZ)^4$, and so $\overline{\fg}(\sigma) = 1_{N/M}$. This shows that $Z(2A_8)\subset \ker(\overline{\fg})$, which contradicts the first condition in (\ref{last case}).
\end{proof}

\begin{proof}[Proof of Theorem~\ref{2An thm}]Suppose for contradiction that $\overline{\fg}$ is surjective. In the case that $N\supsetneq[N,N]$, we may choose $M$ to be such that $M\supset [N,N]$, which ensures that $N/M$ is abelian. Then, by Lemmas~\ref{2An lem1} and~\ref{2An lem2}, we know that $\overline{\ff}$ is trivial, whence $\overline{\fg}$ is a homomorphism, and so we have $N/M\simeq 2A_n/\ker(\overline{\fg})$. Notice that $N/M$ is non-trivial because $M$ is proper by choice. By (\ref{normal}) and the hypothesis that $N\not\simeq 2A_n$, we then deduce that $\ker(\overline{\fg}) = Z(2A_n)$ and so $N/M\simeq A_n$. We now have a contradiction because:
\begin{itemize}
\item If $N\supsetneq[N,N]$, then $N/M$ is abelian by the choice of $M$.
\item If $N=[N,N]$, then $N\simeq 2A_n$ by Lemma~\ref{2An unique}, but $N\not\simeq 2A_n$ by hypothesis.
\end{itemize}
\vspace{1.5mm}
Thus, the map $\overline{\fg}$ cannot be surjective, and the theorem now follows.
\end{proof}

\subsubsection{Symmetric groups}\label{sym sec} To prove Theorem~\ref{sym thm}, consider
\[ \ff \in\Hom(S_n,\Aut(N))\mbox{ and }\fg\in Z_\ff^1(S_n,N).\]
To prove the first statement, by (\ref{Byott formula}) and Proposition~\ref{corr thm}, it suffices to show that one of the three stated conditions holds whenever $\fg$ is bijective.

\begin{proof}[Proof of Theorem~\ref{sym thm}: first statement] First, suppose that $\ker(\ff) = 1$. Since
\[ \ff(S_n) \cap \Inn(N) \mbox{ is a normal subgroup of }\ff(S_n)\simeq S_n,\]
by (\ref{normal}) we have $\ff(S_n)\cap \Inn(N) \in \{\ff(S_n),\ff(A_n),1\}$. It is easy to see that:
\begin{enumerate}[(i)]
\item If $\Inn(N)\cap \ff(S_n) = \ff(S_n)$, then $N\simeq S_n$ so condition $(1)$ holds.
\item If $\Inn(N)\cap\ff(S_n) = \ff(A_n)$, then $A_n$ embeds into $\Inn(N)\simeq N/Z(N)$ and thus $|Z(N)|\leq 2$. Then, condition $(1)$ holds when $|Z(N)|=1$ because a subgroup of index two is always normal, and condition $(2)$ clearly holds when $|Z(N)|=2$.
\item If $\Inn(N)\cap\ff(S_n) = 1$, then condition $(3)$ holds.
\end{enumerate}
Note that we do not need $\fg$ to be bijective for the above arguments.

\vspace{1.5mm}

Now, suppose that $\ker(\ff)\neq 1$, so then $\ker(\ff)\in\{A_n,S_n\}$ by (\ref{normal}). Suppose also that $\fg$ is bijective. If $\ker(\ff) = S_n$, then $N\simeq S_n$ by (\ref{must rho}). If $\ker(\ff) = A_n$, then $N$ contains a subgroup isomorphic to $A_n$ by Lemma~\ref{mod char} (c), which has index two and hence is normal in $N$. In both cases, we see that condition $(1)$ holds. This proves the first statement of the theorem.
\end{proof}

To prove the second statement, let $M$ be any proper maximal characteristic subgroup of $N$. We shall use the notation in (\ref{N/M}), and let $\overline{\ff},\overline{\fg}$ be as in (\ref{fg bar}). By (\ref{Byott formula}) and Proposition~\ref{corr thm}, it suffices to show that 
\[N/M\simeq \bZ/2\bZ\mbox{ whenever $N/M$ is abelian and $\overline{\fg}$ is surjective}.\]
The reader should compare our proof below with that of Lemma~\ref{2An lem2}.

\begin{proof}[Proof of Theorem~\ref{sym thm}: second statement]  Note that 
\[ [A_n: A_n\cap \ker(\overline{\fg})] = \begin{cases}
[S_n:\ker(\overline{\fg})] &\mbox{if }\ker(\overline{\fg})\not\subset A_n,\\
\frac{1}{2}[S_n:\ker(\overline{\fg})]&\mbox{if }\ker(\overline{\fg})\subset A_n,
\end{cases}\]
and these are the only cases because $[S_n:A_n]=2$. 

\vspace{1.5mm}

Suppose that $N/M$ is abelian and $\overline{\fg}$ is surjective. Then, we have $T\simeq \bZ/p\bZ$ for some prime $p$, and $[S_n:\ker(\overline{\fg})] = p^m$ by Lemma~\ref{mod char} (b).  We also have
\[\begin{cases}
n = p^m &\mbox{if }\ker(\overline{\fg})\not\subset A_n,\\
n = 2^{m-1}\mbox{ with }m\geq 4&\mbox{if }\ker(\overline{\fg})\subset A_n,
\end{cases}\] 
by Lemma~\ref{An lem}, as well as
\[\begin{cases} v_p(S_{p^m}) > v_p(\GL_m(\bZ/p\bZ)) &\mbox{for all $m\geq1$},\\
v_2(S_{2^{m-1}}) > v_2(\GL_m(\bZ/2\bZ))&\mbox{for all $m\geq4$},\end{cases}\]
by Lemma~\ref{val lem} and (\ref{m=4}). Since $S_n/\ker(\overline{\ff})$ embeds into $\GL_m(\bZ/p\bZ)$, we see that  $\ker(\overline{\ff})\neq1$, and so $\ker(\overline{\ff})\supset A_n$ by (\ref{normal}). It then follows from Lemma~\ref{mod char} (c) that $\overline{\fg}$ 
restricts to a homomorphism $A_n\longrightarrow N/M$. Since $A_n = [A_n,A_n]$ and $N/M$ is abelian, this implies that $A_n\subset\ker(\overline{\fg})$. But then
\[ 2 = [S_n : A_n] \geq [S_n:\ker(\overline{\fg})] = [N:M]\]
by Lemma~\ref{mod char} (b), and so we must have $N/M\simeq \bZ/2\bZ$, as claimed.
\end{proof}

\section{Acknowledgements} The author is partially supported by the China Postdoctoral Science Foundation Special Financial Grant (grant no.: 2017T100060). She would like to thank the University of Exeter for their hospitality when she presented this research there in May 2018. She would particularly like to thank Prof. N. P. Byott for showing interest in this work and for sending her the thesis \cite{Z thesis} of K. N. Zenouz cited in Lemma~\ref{H lem}. She would also like to thank the referee for pointing out some typos and unclear arguments in the original manuscript.

\end{document}